\numberwithin{equation}{section}
\theoremstyle{plain}
\newtheorem{theorem}{Theorem}[section]
\newtheorem{proposition}[theorem]{Proposition}
\newtheorem{lemma}[theorem]{Lemma}
\theoremstyle{definition}
\theoremstyle{remark}
\newtheorem{remark}[theorem]{Remark}
\newtheoremstyle{com}{}{}{\color{blue}}{}{\color{blue}}{}{ }{}
\theoremstyle{com}
\newcommand{\dd}{\,\mathrm{d}}
\newcommand{\E}{\mathbb{E}}
\newcommand{\R}{\mathbb{R}}
\newcommand{\C}{\mathbb{C}}
\newcommand{\N}{\mathbb{N}}
\newcommand{\tr}{\mathrm{tr}}
\renewcommand{\d}{\mathrm{d}}
\renewcommand{\P}{\mathbb{P}}
\newcommand{\Q}{\mathbb{Q}}
\newcommand{\Pm}{\mathcal{P}}
\renewcommand{\Re}{\mathrm{Re}\,}
\newcommand{\U}{\mathcal{U}}
\newcommand{\A}{\mathcal{A}}
\newcommand{\F}{\mathbb{F}}
\newcommand{\M}{\mathcal{M}^+}
\title{Linearized Filtering of Affine Processes Using Stochastic Riccati Equations}
\author[Lukas Gonon]{Lukas Gonon}
\address{Lukas Gonon, Eidgen\"ossische Technische Hochschule Z\"urich, Switzerland}
\email{lukas.gonon@math.ethz.ch}
\author[Josef Teichmann]{Josef Teichmann}
\address{Josef Teichmann, Eidgen\"ossische Technische Hochschule Z\"urich, Switzerland}
\email{josef.teichmann@math.ethz.ch}
\begin{document}


\begin{abstract}
  We consider an affine process $ X $ which is only observed up to an
  additive white noise, and we ask for the law of $ X_t $, for some
  $ t > 0 $, conditional on all observations up to time $ t $. This is
  a general, possibly high dimensional filtering problem which is not
  even locally approximately Gaussian, whence essentially only
  particle filtering methods remain as solution techniques. In this
  work we present an efficient numerical solution by introducing an
  approximate filter for which conditional characteristic functions
  can be calculated by solving a system of generalized Riccati
  differential equations depending on the observation and the process
  characteristics of $X$. The quality of the approximation can be
  controlled by easily observable quantities in terms of a macro
  location of the signal in state space. Asymptotic techniques as well
  as maximization techniques can be directly applied to the solutions
  of the Riccati equations leading to novel very tractable filtering
  formulas. The efficiency of the method is illustrated with numerical
  experiments for Cox–Ingersoll–Ross and Wishart processes, for which
  Gaussian approximations usually fail.
\end{abstract}
\thanks{Both authors thank for the generous support of ETH Foundation and of SNF}

\maketitle
\frenchspacing

\noindent\textbf{Key words and phrases: affine process, filtering, conditional law, Riccati equation, Zakai equation.}   \\
\textbf{MSC 2010 Classification: 60G35, 62M20}

\section{Introduction}
Consider a time-dependent stream of multi-variate signals which can
not be observed directly, but only through a stream of noisy
measurements. Given the observations made up to a specified moment in
time $t$, what can we optimally say about the signal at time $t$,
i.e. what is the best estimate for the signal's value? There are
various mathematical formulations of this fundamental
problem. Research fields such as time-series analysis, signal
processing and (frequentist) non-parametric statistics model the
signal process as a deterministic function or focus on a discrete-time
setting. In \textit{stochastic filtering} the signal and observation
processes are modeled as continuous-time stochastic processes, i.e.~a
(dynamic) Bayesian perspective is adopted.

The mathematical formulation of the stochastic filtering problem is
the following: consider a $D$-valued stochastic process $X$, a
$p$-dimensional Brownian motion $W$ and an observation function
$h \colon D \to \R^p$. Define the observation process $Y$ as
\begin{equation}\label{eq:Yintro} Y_t = \int_0^t h(X_s)\dd s + W_t,
  \quad t \geq 0. \end{equation}
Note that both $X$ and $W$ are defined on \emph{one} probability space $(\Omega,\mathcal{F},\P)$, $p \in \N$, the state space $D$ is some set and some regularity on $h$ and the sample paths of $X$ needs to be imposed to make \eqref{eq:Yintro} well-defined. Here $X$ models the signal and $Y$ the observation process. The \textit{filtering problem} is to calculate $\pi_t$, the conditional distribution of $X_t$ given $\mathcal{F}_t^Y := \sigma(Y_s \colon s \in [0,t])$, i.e.~the observations up to time $t$, for each $t \geq 0$.

Starting in the mid-twentieth century, stochastic filtering has
received an enormous amount of attention and has influenced many
fields of mathematics -- we refer to the introductory textbooks
\cite{shiryaev2001}, \cite{Bain2009}, the historical overview in
\cite{crisan2014} and the handbook \cite{crisan2011}. Theoretically
the filtering problem has been solved: $(\pi_t)_{t \geq 0}$ can be
characterized as the unique solution to a measure-valued stochastic
differential equation (the Fujisaki-Kallianpur-Kunita or
Kushner-Stratonovich equation). For applications, e.g., in
mathematical finance \cite{Brigo1998} or geophysics \cite{law2015},
also a quick numerical calculation of $\pi_t$ is quintessential -- in
fact for any application of a continuous-time stochastic model that
features latent factors. It has been shown that apart from a few
special cases, e.g.~when $h$ is affine and $X$ is an
Ornstein-Uhlenbeck process or when the state space $D$ consists of
finitely many points, the equation for $(\pi_t)_{t \geq 0}$ is truly
infinite-dimensional. As a consequence, devising numerical methods to
calculate $\pi_t$ or even just the conditional mean
$\E[X_t|\mathcal{F}^Y_t]$ is challenging. In most cases it is
in-feasible due to computational constraints. Standard numerical
methods (\cite[Chapters 8-10]{Bain2009}) either only work for
low-dimensional state spaces or for approximately Gaussian
setups.\footnote{In high-dimensional geophysical applications for
  example, only approximate Gaussian filters are routinely used (see
  the preface of \cite{law2015}).}  However, post-crisis financial
modeling asks for factor processes $X$ which are both high-dimensional
and not approximately Gaussian. The lack of numerical filtering
methods for such processes has put serious limitations on the modeling
flexibility: one has not been able to include latent factors in them.

In the present article, we fill this gap and show that the narrow
class of processes for which an efficient numerical solution is
possible (see above) also includes \textit{affine processes}.  More
precisely, we consider the case when $h$ is affine and the signal
process $X$ is an \textit{affine process} with state space
$D=\R_+^m \times \R^{d-m}$ as characterized in \cite{duffie2003}. This
class of processes includes for example L\'evy processes,
Cox-Ingersoll-Ross processes \cite{Cox1985} or the Heston model
\cite{Heston1993} and is very widely used in financial applications
(see e.g.\ \cite{duffie2003}, \cite[Section~3]{keller-ressel2015} for
a list of references). The filtering problem arises naturally in this
context; for example, $X$ could model the short rate and $Y$ the
observed yields of bond prices as in \cite{Geyer1999},
\cite{Chen2003}, see also \cite{Brigo1998}.

Let us briefly summarize the key ideas of our approach. As a first
step the distribution of $X$ conditional on $ \mathcal{F}^Y$ is
rewritten in terms of the pathwise filtering functional as studied by
\cite{Davis1980}, \cite{Clark1978}. Although the functional itself is
not directly tractable, it can be approximated by a linearized version
thereof. This new \textit{linearized filtering functional} (LFF) is
numerically tractable, since the Fourier coefficients can be
calculated by solving a system of generalized Riccati equations with
vector fields depending on the observation $Y$.  This gives rise to
Fourier filtering techniques, analogously to the Fourier pricing
techniques used for affine (log-price) models, see e.g.\
\cite{Carr1999} and \cite{duffie2003}. In addition the (approximate)
conditional moments can be calculated by solving a system of ordinary
differential equations. In contrast to existing numerical methods
(e.g. a particle filter), this is very well-suited to parallel
computations and thus promising for high-dimensional filtering.

There is also another equally fruitful viewpoint on this approach:
the Zakai equation for the (un-normalized) distribution $ \sigma_t $
is a stochastic partial differential equation (SPDE) of the following
form (under mild regularity conditions)
\[
  d \sigma_t (dx) = \mathcal{A}^* \sigma_t (dx) dt + \sum_{i=1}^d
  h_i(x) \sigma_t(dx) dY^i_t \, ,
\]
where $ \mathcal{A}^* $ denotes the adjoint of the generator of
$X$. This equation, even though linear, has a quite complicated
geometry: essentially only the Kalman filter, which corresponds to an
Ornstein Uhlenbeck process $X$ and linear observation $ h(x) = x $
allows for a finite dimensional realization, i.e.~a way to write the
SPDE's solution via solutions of finite dimensional stochastic
differential equations. This is due to the fact that the geometrically
relevant Stratonovich formulation of the equation has an additional
term of type $ h(x)^2 \sigma_t (dx) $ in the drift, which in turn
causes the infinite dimensional analogon of hypo-ellipticity, whence
no finite dimesional realizations can exist. The only way to cure this
phenomenon in the relevant Brownian case is by replacing the
Stratonovich correction by a linear expression, which is of course
locally possible in a well controlled way. Then this \emph{modified
  Zakai equation} has a completely different solution structure which
can often be described by finite dimensional stochastic differential
equations. In case of general affine processes $X$, even beyond the
canonical setting used in this article, the modified Zakai equation
under linear observation can be considered as an \emph{affine} SPDE
with time-dependent affine potential term (for this interpretation one
necessarily needs the Stratonovich formulation), whose solution can be
described by generalized stochastic Riccati equations. Notice also
that the modification of the Zakai equation depends on the nature of
the Stratonvich correction, in particular in case of finite variation
noises the modification would vanish and we would actually have a
solution theory for the classical Zakai equation by Fourier methods.

All of this is explained in detail in
Section~\ref{sec:linearizedFiltering}, while
Section~\ref{sec:background} provides background on affine processes
and the filtering problem. The proofs of the statements on the LFF as
well as local existence and uniqueness of solutions to the Riccati
equations in Section~\ref{sec:linearizedFiltering} are then given in
Section~\ref{sec:proofsAff}. They are based on a change of measure and
comparison results for generalized Riccati equations. These are of
independent interest and extend results from \cite{Kallsen2010} and
\cite{keller-ressel2015} to Riccati equations associated to
non-conservative time-inhomogeneous affine ``processes'' that do not
necessarily satisfy the admissibility conditions.
 
This theoretical analysis is complemented by a numerical study. In
Section~\ref{sec:CIR} the methodology is applied to the problem of
filtering a Cox-Ingersoll-Ross (CIR) process. In numerical examples
the filter induced by the linearized filtering functional, the
\textit{affine functional filter} (AFF), is compared to the benchmark
(a bootstrap particle filter) and two standard Gaussian- and
Gamma-approximation approaches (extended Kalman filter and
\cite{Bates2006}). Not only is the AFF very close to the benchmark
(and in particular more accurate than the two approximations), but it
can also be calculated more efficiently than a particle filter. In
examples in higher dimensions the situation turns out to be even more
extreme: In Section~\ref{sec:Wishart} the methodology is applied to
Wishart processes \cite{Bru1991}, a matrix-valued extension of CIR
processes (and a special case of affine processes taking values in
$S_d^+$, the set of symmetric positive semi-definite matrices). While
in theory particle methods are applicable to this problem, in practice
this requires enormous computational resources. Numerical experiments
(already) for $d=3$ show that in order to achieve the same level of
accuracy (measured in terms of mean square error) as the AFF an
outrageous number of particles would be necessary.  Conversely, if one
only uses a number of particles yielding similar computing times for
the two methods, the mean-square error of a bootstrap particle filter
is still by far larger than the error of the AFF.  This makes the AFF
the first numerically feasible method for filtering Wishart processes.
 
\subsection{Notation}

Fix a complete probability space $(\Omega,\mathcal{F},\P)$ on which
all random variables are defined.

Fix $p \in \mathbb{N}$, $m \in \mathbb{N} \cup \{0\}$,
$d \in \mathbb{N}$ with $d \geq m$ and set
$D = \R_+^m \times \mathbb{R}^{d-m}$. Let
$\langle \cdot , \cdot \rangle$ denote the standard inner product on
$\R^d$ and $|\cdot|$ the associated norm. Also write
$\langle \cdot , \cdot \rangle$ for the linear extension of the inner
product to $\R^{d} + i \R^{d}$, but without complex conjugation.  Set
\[ I:= \{1,\ldots,m \}, \quad J:= \{m+1,\ldots,d \}.  \] For
$k \in \N$, write
\[ \C_{-}^k = \{ u \in \C^k \,:\, \Re u_i \leq 0, \, \forall i \},
  \quad \C_{--}^k = \{ u \in \C^k \,:\, \Re u_i < 0, \, \forall i
  \} \] and define $\U = \C_{-}^m \times i \R^n$ .

Denote by $B(D)$ and $C_b(D)$ the sets of bounded measurable functions
and bounded continuous functions on $D$ and by $\Pm(D)$ the set of
probability measures on $D$. As usually, $\Pm(D)$ is equipped with the
topology of weak convergence. Let $\M(D)$ denote the set of finite
measures on the Borel $\sigma$-algebra $\mathcal{B}(D)$. Given
$\mu \in \M(D)$ and a measurable, $\mu$-integrable function $f$ on
$D$, write $\mu f := \int_D f(x) \mu(\d x)$.

Fix a continuous truncation function $\chi \colon \R^d \to [-1,1]^d$
with $\chi(\xi) = \xi$ in a neighborhood of $0$ and bounded away from
$0$ outside that neighborhood. In fact, in order to be able to rely on
a result from \cite{Kallsen2010} for $k=1,\ldots d$ we choose
\[ \chi_k(x)=\begin{cases} 0 \quad & \text{ if } x_k = 0, \\ (1 \wedge
    |x_k|)\frac{x_k}{|x_k|} \quad & \text{ otherwise.}
  \end{cases} \]

Let $\pi_0 \in \Pm(D)$, $D(\mathcal{L}) \subset C_b(D)$ and
$\mathcal{L}\colon D(\mathcal{L}) \to C_b(D)$ linear. Recall that a
$D$-valued stochastic process $(X_t)_{t \geq 0}$ defined on some
probability space
$(\tilde{\Omega},\tilde{\mathcal{F}},\tilde{\mathbb{P}})$ is called a
\textit{solution to the martingale problem for
  $(D(\mathcal{L}),\mathcal{L},\pi_0)$}, if
$\tilde{\mathbb{P}} \circ X_0^{-1} = \pi_0$ and for each
$h \in D(\mathcal{L})$, the process
\[
  h(X_t) - h(X_0) - \int_0^t \mathcal{L} h (X_s) \dd s, \quad t \geq
  0,
\]
is a martingale (in its own filtration). The martingale problem for
$(D(\mathcal{L}),\mathcal{L},\pi_0)$ is said to be \textit{well-posed}
if there exists a solution and any two solutions have the same
finite-dimensional marginal distributions.

\section{Background: Affine processes and the filtering
  problem}\label{sec:background}

\subsection{Affine processes}
\subsubsection{Definition and characterization} \label{sec:affine} Let
us review the definition of an affine process and some consequences
thereof. We refer to \cite{duffie2003}, \cite{Keller-Ressel2011} and
\cite{Cuchiero2013} for further details and references.

Consider a $D$-valued time-homogeneous Markov process
$((X_t)_{t \geq 0},(\P_x)_{x \in D})$ defined on
$(\Omega,\mathcal{F})$, see \cite[Chapter III]{Rogers2000}. Denote by
$(P_t)_{t \geq 0}$ the associated semigroup on $B(D)$ and assume
$P_t 1 = 1$ for all $t \geq 0$ (i.e. the process is
conservative). $((X_t)_{t \geq 0},(\P_x)_{x \in D})$ is called affine,
if it is stochastically continuous, $X$ has RCLL-paths
($\P_x$-a.s. for any $x \in D$) and there exist functions
$\phi:\R_{\geq 0} \times \U \to \C$ and
$\psi: \R_{\geq 0} \times \U \to \C^d$ such that for all $x \in D$,
$(t,u) \in \R_{\geq 0} \times \U$:
\begin{equation}\label{eq:affineDef}
  \E_x[e^{\langle X_t , u\rangle}] = \exp(\phi(t,u) + \langle x, \psi(t,u) \rangle).
\end{equation}

\begin{remark} As shown in \cite{Keller-Ressel2011} this definition
  implies that for all $u \in \U$,
  \begin{equation}\label{eq:affineRegular}
    F(u):= \left. \frac{\partial \phi}{\partial t}(t,u)\right\rvert_{t = 0+} \, , \quad R(u):= \left. \frac{\partial \psi}{\partial t}(t,u) \right\rvert_{t = 0+} 
  \end{equation}
  exist and are continuous at $u = 0$. Thus, in the terminology of
  \cite{duffie2003} we are considering a conservative, regular affine
  process.

\end{remark}

\begin{remark}
  Alternatively, we could only assume that
  $((X_t)_{t \geq 0},(\P_x)_{x \in D})$ is conservative,
  stoch\-astically continuous and \eqref{eq:affineDef} holds for
  $(t,u) \in \R_{\geq 0} \times i\R^d$. Then \cite{Keller-Ressel2011}
  implies that it is a Feller process and in particular, we may choose
  an RCLL version of $X$ on $D$ (under $\P_x$, for any
  $x \in D$).\footnote{Since the process is conservative, there is no
    need to consider the one-point compactification of $D$.} Finally
  \cite[Theorem 2.7]{duffie2003} implies that \eqref{eq:affineDef} can
  be extended to $\R_{\geq 0} \times \U$.
\end{remark}

Let us now review some key properties of affine processes. To
formulate these, an additional definition is required: A collection of
parameters
\begin{equation}\label{eq:collectionOfParameters}
  (a,\alpha,b,\beta,c,\gamma,\mu^0,\mu) \end{equation}
is called admissible, if it satisfies the following (admissibility) conditions:
\begin{align}\label{eq:admiss1}
  a \in \mathrm{Sem}^d \text{ with } a_{i,j} = 0  \, & \text{ for } \quad i,j \in I \\ \label{eq:alphacond}
  \alpha=(\alpha^1,\ldots,\alpha^m) \text{ with } \alpha^i \in \mathrm{Sem}^d \text{ and } \alpha_{k,j}^i= 0 & \text{ for } \quad k,j \in I \setminus \{i\} \\ \label{eq:driftcond1}
  b \in \R^d \text{ with } b_i - \int_{D \setminus \{0\}} \chi_i(\xi) \mu^0(\d \xi) \geq  0  \, & \text{ for } \quad i \in I \\ \label{eq:driftcond2}
  \beta \in \R^{d \times d} \text{ with } \beta_{i,j} - \int_{D \setminus \{0\}} \chi_i(\xi) \mu^j(\d \xi) \geq  0  \, & \text{ for } \quad i,j \in I \text{ and } i \neq j\\ \label{eq:driftcond3}
  \beta_{i,k}=0  \, & \text{ for } \quad i\in I, \, k \in J
  \\ \label{eq:ccond}
  c \in \R_+  &
  \\ \label{eq:gamcond}
  \gamma \in \R^m_+  & 
  \\ \mu=(\mu^1,\ldots,\mu^m) \text{ and for } i \in I \cup \{0\}, 
  \mu^i &\text{ is a Borel measure on } D\setminus \{0\}
  \\ \label{eq:admissLevyMes1} \int_{D\setminus\{0\}} \chi_k(\xi) \mu^i(\d \xi) < \infty \,  \text{ for } \quad  & i \in I \cup \{0\}, \, k\in I \setminus \{i\}
  \\ \label{eq:admissLevyMes2} \int_{D\setminus\{0\}} \chi_k(\xi)^2 \mu^i(\d \xi) < \infty \,  \text{ for } \quad & i \in I \cup \{0\}, \, k \in (J \cup \{i\})\setminus\{0\}
\end{align}

\begin{remark}\label{rmk:admissibility}
  The admissibility conditions are identical with
  \cite[Definition~2.6]{duffie2003}. We have only changed notation
  slightly in order to match the semimartingale notation in
  \cite{Kallsen2010}. The measure $m$ in
  \cite[Definition~2.6]{duffie2003} is denoted $\mu^0$ here, the
  truncation function is arbitrary (as in \cite{Filipovic2005}) and we
  denote by $b, \beta$ the parameters $\tilde{b},\tilde{\beta}$ from
  \cite[Theorem~2.12]{duffie2003}. Our conditions
  \eqref{eq:driftcond1}, \eqref{eq:driftcond2} for these are
  equivalent to conditions (2.6) and (2.7) in
  \cite[Definition~2.6]{duffie2003}. This leads to different
  expressions below for \eqref{eq:FandR} and
  \eqref{eq:affineGeneratorShort} than in \cite{duffie2003}, see also
  \cite[Remark~2.13]{duffie2003}.
\end{remark}

Suppose $((X_t)_{t \geq 0},(\P_x)_{x \in D})$ is an affine process and
denote again by $(P_t)_{t \geq 0}$ the restriction of the associated
semigroup to $C_0(D)$. Then (see \cite[Theorem~2.7, Theorem~2.12 and
Proposition~9.1]{duffie2003}) there exists a collection of admissible
parameters\footnote{Recall that we only consider conservative affine
  processes here.} \eqref{eq:collectionOfParameters} with $c=0$ and
$\gamma=0$ such that the following properties hold:
\begin{itemize}
\item $F$ and $R$ in \eqref{eq:affineRegular} are given as
  \begin{equation}\label{eq:FandR} \begin{aligned}
      F(u)&=  \frac{1}{2}\langle u, a u \rangle + \langle b , u \rangle + \int_{D \setminus \{0\}}\left(e^{\langle \xi, u \rangle } - 1 - \langle \chi(\xi), u \rangle \right) \mu^0(\d \xi) \\
      R_i(u) & = \frac{1}{2}\langle u, \alpha^i u \rangle + \langle
      \beta^i , u \rangle + \int_{D \setminus \{0\}}\left(e^{\langle
          \xi, u \rangle } - 1 - \langle \chi(\xi), u \rangle \right)
      \mu^i(\d \xi)
    \end{aligned}
  \end{equation}
  for $i = 1,\ldots, m$ and $R_i(u) = \langle \beta^i , u \rangle$ for
  $i = m+1,\ldots,d$. Here $\beta^i \in \R^d$ is defined via
  \[ \beta^i_j:=\beta_{j,i}, \quad \text{ for } \quad 1 \leq i,j \leq
    d.\]
\item $\phi$ and $\psi$ solve the generalized Riccati equations
  \begin{equation}\label{eq:genRiccati}\begin{aligned}
      \partial_t \phi(t,u) & = F(\psi(t,u)), \quad \phi(0,u) = 0 \\
      \partial_t \psi(t,u) & = R(\psi(t,u)) , \quad \psi(0,u) = u
    \end{aligned}
  \end{equation}
  for $t \geq 0, u \in \U$.

\item $(P_t)_{t \geq 0}$ is a Feller semigroup (in the sense of
  \cite[Chapter~III]{Revuz1999}). Denote by $(D(\A),\A)$ its
  infinitesimal generator. Then $C_c^\infty(D)$ is a core for $\A$,
  $C_0^2(D) \subset D(\A)$ and for any $f \in C_0^2(D)$, $x \in D$,
  \begin{equation}\label{eq:affineGeneratorShort}\begin{aligned}
      \A f (x) & = \frac{1}{2} \sum_{k,l=1}^d \alpha_{k l}(x)
      \frac{\partial^2 f(x)}{\partial x_k \partial x_l} + \langle
      \beta(x) , \nabla f(x) \rangle \\ & + \int_{D \setminus \lbrace
        0\rbrace} (f(x+\xi)-f(x)- \langle \chi(\xi), \nabla f(x)
      \rangle ) K(x,\d \xi)
    \end{aligned}\end{equation}
  where 
  \begin{equation}\label{eq:affineGeneratorShortCoeff}\begin{aligned}
      \alpha (x) & = a + \sum_{i=1}^m \alpha^i x_i \\
      \beta (x) & =  b + \sum_{i=1}^d \beta^i x_i \\
      K(x,\d \xi) & = \mu^0(\d \xi) +\sum_{i=1}^m x_i \mu^i(\d \xi).
    \end{aligned}\end{equation}

\item $X$ is a semimartingale (under $\P_x$, for any $x \in D$)
  admitting characteristics $(B,C,\nu)$ with respect to $\chi$ given
  by
  \begin{equation}\label{eq:characteristics}
    B_t = \int_0^t \beta(X_s) \dd s, \quad C_t = \int_0^t \alpha(X_s) \dd s , \quad \nu(\d t,\d\xi) = K(X_t,\d \xi)\dd t,
  \end{equation}
  where $\alpha, \beta, K$ are as in
  \eqref{eq:affineGeneratorShortCoeff}.
\end{itemize}

Finally, let us put (conservative) affine processes into the framework
of \cite{Ethier1986a}. This is the purpose of
Lemma~\ref{lem:martingaleProblem} below. It is very close to
\cite[Lemma~10.2]{duffie2003}, but considers arbitrary initial laws
and establishes uniqueness also within the class of solutions to the
martingale problem which are not necessarily RCLL. This extension is
required to establish uniqueness for evolution equations (as the Zakai
equation in Theorem~\ref{thm:zakaiUniqueness} below) associated to
$\A$.

\begin{lemma}\label{lem:martingaleProblem} Fix a collection of
  admissible parameters \eqref{eq:collectionOfParameters} and define
  $\A_0$ as the restriction of $\A$ (see
  \eqref{eq:affineGeneratorShort}) to $C_c^\infty(D)$. Then for any
  $\pi_0 \in \Pm(D)$, the martingale problem for
  $(C_c^\infty(D),\A_0, \pi_0)$ is well-posed and the solution has
  RCLL-sample paths.
\end{lemma}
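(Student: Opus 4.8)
The plan is to leverage the known well-posedness result of Duffie–Filipović–Schachermayer, namely \cite[Lemma~10.2]{duffie2003}, which already gives well-posedness of the martingale problem for $(C_c^\infty(D), \A_0, \delta_x)$ with RCLL solutions for each fixed initial point $x \in D$, together with the fact that the associated solutions form the affine Feller process $((X_t)_{t\ge 0},(\P_x)_{x\in D})$ whose semigroup $(P_t)_{t\ge 0}$ was recalled above. The task is only to upgrade from deterministic initial conditions $\delta_x$ to arbitrary $\pi_0 \in \Pm(D)$ and, for uniqueness, to drop the a priori RCLL requirement on competing solutions.

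First I would establish \emph{existence} for general $\pi_0$: take the affine Feller process, put $X_0 \sim \pi_0$ (by mixing the family $(\P_x)_x$ against $\pi_0$, i.e. working on the space with law $\int_D \P_x\, \pi_0(\d x)$), and check that for $f \in C_c^\infty(D)$ the process $f(X_t) - f(X_0) - \int_0^t \A_0 f(X_s)\,\d s$ is a martingale in its own filtration. This follows because under each $\P_x$ it is a martingale in the (larger) natural filtration of $X$ — here one uses that $C_c^\infty(D) \subset C_0^2(D) \subset D(\A)$ and that $\A$ restricted to $C_c^\infty(D)$ agrees with $\A_0$, as recorded in \eqref{eq:affineGeneratorShort}–\eqref{eq:affineGeneratorShortCoeff} — and the martingale property is preserved under mixing of the initial condition and under passing to a sub-filtration (conditional expectations with respect to a coarser filtration still yield a martingale, by the tower property applied $\P_x$-wise and then integrated against $\pi_0$). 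Sample paths are RCLL since they are RCLL under each $\P_x$.

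Next, \emph{uniqueness}. The standard route, following \cite[Chapter~4]{Ethier1986a}, is: (i) reduce uniqueness for arbitrary initial law to uniqueness for $\delta_x$ via the fact that the martingale problem is \emph{linear} in $\pi_0$ — any solution started from $\pi_0$ is, after conditioning on $X_0$, a mixture of solutions started from points, so if one-dimensional marginals are unique for each $\delta_x$ they are unique for $\pi_0$; (ii) upgrade uniqueness of one-dimensional marginals to uniqueness of all finite-dimensional marginals via the Markov-property argument (Theorem~4.4.2 in \cite{Ethier1986a}), which requires the generator $\A_0$ to map into $C_b(D)$ and the domain $C_c^\infty(D)$ to be measure-determining and to separate points — both hold here. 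The one point that needs care is that \cite{duffie2003} proves uniqueness only within RCLL solutions, whereas here we allow general (measurable-in-$t$) solutions. To remove this restriction I would invoke the equivalence between the martingale problem for $(C_c^\infty(D),\A_0,\pi_0)$ and the corresponding $L^p$- or $C_b$-valued abstract Cauchy problem / the fact that $C_c^\infty(D)$ is a core for the Feller generator $\A$ (stated in the excerpt): a solution of the martingale problem — RCLL or not — has one-dimensional marginal $t \mapsto \mathcal{L}(X_t)$ solving $\frac{\d}{\d t}\langle \mu_t, f\rangle = \langle \mu_t, \A_0 f\rangle$ weakly for $f$ in a core, and such a measure-valued evolution equation has a unique solution, namely $\mu_t = \pi_0 P_t$. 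This pins down all one-dimensional marginals without any path regularity assumption, and then step (ii) finishes the job.

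The main obstacle is precisely this last point — carefully arguing that a non-RCLL solution still has its time-marginals uniquely determined. Concretely, one must show that for a solution $X$ (with only, say, measurable sample paths) the map $t \mapsto \E[f(X_t)]$ is continuous (or at least the measure-valued curve is weakly continuous), so that one is in the uniqueness class for the forward equation; this uses stochastic continuity of the process — available from the defining property of affine processes via \eqref{eq:affineDef} and the regularity recalled in the first remark — and the martingale property to get the right differential equation. Everything else is a routine invocation of the martingale-problem machinery of \cite{Ethier1986a} combined with the explicit affine structure \eqref{eq:affineGeneratorShort}.
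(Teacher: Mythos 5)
Your proposal is correct and follows essentially the same route as the paper: the paper's proof is a three-line citation of the Ethier--Kurtz machinery (Chapter~4, Theorems~2.2, 2.7 and 4.1) combined with the facts from \cite{duffie2003} that $(P_t)$ is a conservative Feller semigroup and $C_c^\infty(D)$ is a core, and your argument is just the unpacked version of exactly those theorems (existence by mixing the Feller process over $\pi_0$, uniqueness of one-dimensional marginals for arbitrary, not necessarily RCLL, solutions via the forward/resolvent equation whose well-posedness comes from the core property, then the Markov-property upgrade to finite-dimensional distributions). The only cosmetic difference is your intermediate reduction to point-mass initial laws, which the direct application of the Ethier--Kurtz uniqueness theorem renders unnecessary.
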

\begin{proof}
  The statement of \cite[Theorem 2.7]{duffie2003} that $X$ is a Feller
  process means that $(P_t)_{t \geq 0}$ is a strongly continuous,
  positive contraction semigroup on $C_0(D)$ in the terminology of
  \cite{Ethier1986a}.  Furthermore, by \cite[Chap.4,
  Cor.~2.8]{Ethier1986a} and since $X$ is conservative,
  $(D(\A),\mathcal{A})$ is conservative (in the terminology of
  \cite{Ethier1986a}). Thus $(P_t)_{t \geq 0}$ is a Feller semigroup
  (on $C_0(D)$) also in the terminology of \cite{Ethier1986a}. Set
  $D(\A_0)=C_c^\infty(D)$.  By \cite[Theorem 2.7]{duffie2003},
  $D(\A_0)$ is a core for $(D(\A),\A)$ and so the closure of the
  operator $(D(\A_0),\A_0)$ is again $(D(\A),\A)$. Combining
  \cite[Chap.4, Thm. 2.2, 2.7 and 4.1]{Ethier1986a} then yields the
  statement.
\end{proof}

In view of Lemma~\ref{lem:martingaleProblem} the following terminology
is sensible: Fix $\pi_0 \in \Pm(D)$. We call an RCLL stochastic
process $X$ on $(\Omega,\mathcal{F},\P)$ an affine process started
from $\pi_0$, if it is a solution to the martingale problem for
$(C_c^\infty(D),\A_0, \pi_0)$. By Lemma~\ref{lem:martingaleProblem}
this uniquely determines the law of $X$ under $\P$.

\subsubsection{Exponential moments of affine processes}
For the analysis of this article, it will be necessary to extend
\eqref{eq:affineDef} to $U_0 \subset \R_{\geq 0} \times \C^d$, where
$U_0$ is open and $0 \in U_0$. This means that an assumption on
exponential moments is required. Suppose that
\begin{equation}\label{eq:expMoments}
  \int_{D \setminus \{|z| \leq 1\}} |z| e^{\langle z , u\rangle} \mu^i(\d z) < \infty \quad \text{ for all }i=0,\ldots, m \text{ and } u \in \R^d.
\end{equation} 
Suppose $((X_t)_{t \geq 0},(\P_x)_{x \in D})$ is an affine process and
define
\begin{equation}
  E = \{ (t,u) \in \R_{\geq 0} \times \R^d \; : \; \E_x[e^{\langle X_t , u \rangle}] < \infty \, \text{ for all } \; x \in D \}.
\end{equation}
By definition, this is the maximal domain on which the left hand side
of \eqref{eq:affineDef} is finite. Under assumption
\eqref{eq:expMoments}, $E$ is open, $0 \in E$ and $\phi$ and $\psi$
can be extended to $E$. This is summarized in the next Lemma, which
directly follows from \cite{keller-ressel2015} and
\cite{filipovic09}. See also \cite{spreij2010} and further references
in all these articles.

\begin{lemma} Suppose \eqref{eq:expMoments} holds. Then
  \begin{itemize}
  \item[(i)] $E$ is open in $\R_{\geq 0} \times \R^d$,
  \item[(ii)] for any $(T,u) \in \R_{\geq 0} \times \C^d$ with
    $(T,\Re u) \in E$, there exists a unique solution to
    \eqref{eq:genRiccati} on $[0,T]$ and \eqref{eq:affineDef} holds.
  \end{itemize}
\end{lemma}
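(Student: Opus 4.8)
The plan is to reduce the statement to the exponential-moment theory for affine processes developed in \cite{keller-ressel2015} and \cite{filipovic09}; the genuine work on our side is to check that assumption \eqref{eq:expMoments} places us exactly in the hypotheses of those results. First I would record the effect of \eqref{eq:expMoments} on the functional characteristics: since \eqref{eq:expMoments} controls the large jumps ($|z|>1$) of each $\mu^i$, $i \in I \cup \{0\}$, while \eqref{eq:admissLevyMes2} already controls the small jumps, the L\'evy--Khintchine type integrals appearing in \eqref{eq:FandR} converge for every $u$ with $\Re u \in \R^d$ and, by dominated convergence and differentiation under the integral sign, extend $F$ and $R_i$ ($i \in I$) to functions analytic on an open set $\Theta \subset \C^d$ with $\R^d \subset \Theta$. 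In particular the vector field $R$ on the right-hand side of \eqref{eq:genRiccati} is $C^1$, hence locally Lipschitz, on $\Theta$.

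For part (i), introduce the set $\mathcal{D}(T) := \{ u \in \R^d : \eqref{eq:genRiccati} \text{ has a solution } (\phi(\cdot,u),\psi(\cdot,u)) \text{ on } [0,T] \text{ with } \psi \text{ staying in the domain of } R \}$. By standard ODE theory (local existence and uniqueness from the local Lipschitz property, together with lower semicontinuity of the exit/blow-up time in the initial condition), the set $\{(T,u) : u \in \mathcal{D}(T)\}$ is open in $\R_{\geq 0} \times \R^d$. The substantive input from \cite{keller-ressel2015} (see also \cite[Ch.~10]{filipovic09}) is the identification $E = \{(T,u) : u \in \mathcal{D}(T)\}$ together with the validity of \eqref{eq:affineDef} on this set: the inclusion ``ODE solves $\Rightarrow$ moment finite and formula valid'' follows from a localization argument showing that $e^{\langle X_{t}, \psi(T-t,u)\rangle + \phi(T-t,u)}$ is a true (not merely local) martingale on $[0,T]$, while the converse ``moment finite $\Rightarrow$ ODE does not blow up before $T$'' is the delicate direction, proved via convexity of $u \mapsto \log \E_x[e^{\langle X_t, u\rangle}]$ and a blow-up argument. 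Granting this identification, openness of $E$ is immediate, which proves (i).

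For part (ii), fix $(T,u) \in \R_{\geq 0} \times \C^d$ with $(T,\Re u) \in E$. By (i) the real Riccati system started at $\Re u$ solves on $[0,T]$ with values in $\Theta$; a componentwise comparison argument (the real parts of the complex solution are dominated by the real solution started at $\Re u$, in accordance with $|\E_x[e^{\langle X_t, u\rangle}]| \leq \E_x[e^{\langle X_t, \Re u\rangle}] < \infty$) yields existence and uniqueness of the complex solution of \eqref{eq:genRiccati} on $[0,T]$. The identity \eqref{eq:affineDef} then follows either by analytic continuation in $u$ from the case where $\Re u$ is fixed and the imaginary part ranges over $i\R^d \subset \U$ (where \eqref{eq:affineDef} holds by definition), or, as in \cite{keller-ressel2015}, directly from the martingale property of $e^{\langle X_{t}, \psi(T-t,u)\rangle + \phi(T-t,u)}$, which is a true martingale because it is dominated by the true martingale associated with $\Re u$.

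The main obstacle is not the bookkeeping above but the identification invoked in step (i) --- that finiteness of $\E_x[e^{\langle X_T, u\rangle}]$ for all $x$ forces the generalized Riccati system to exist up to time $T$. This rests on the convexity and blow-up machinery of \cite{keller-ressel2015}; since we use it as a black box, the only thing that has to be verified carefully on our side is that \eqref{eq:expMoments} (a tail condition on $|z|>1$ only) together with the admissibility conditions \eqref{eq:admissLevyMes2} reproduces precisely the standing assumptions of that reference, and that our normalization of $b,\beta$ chosen to match the semimartingale notation of \cite{Kallsen2010} does not affect the statement.
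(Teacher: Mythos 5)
Your proposal follows essentially the same route as the paper's proof: analyticity of $F$ and $R$ from \eqref{eq:expMoments} via \cite[Lemma~5.3]{duffie2003}, openness of the maximal existence domain of the real Riccati system by the argument of \cite[Lemma~2.3]{filipovic09}, the identification of that domain with $E$ via \cite[Theorems~2.14 and 2.17]{keller-ressel2015}, and part (ii) via the comparison of the complex solution with the real one as in \cite[Theorem~2.26]{keller-ressel2015}. The argument is correct; you simply unpack the content of the cited results where the paper cites them directly.
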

\begin{proof}
  By \cite[Lemma~5.3]{duffie2003} and \eqref{eq:expMoments}, $F$ and
  $R$ are analytic functions. Therefore the same reasoning as in the
  proof of \cite[Lemma~2.3]{filipovic09} shows that for any
  $u \in \C^d$, there exists $t_+(u) \in (0,\infty]$ such that
  \eqref{eq:genRiccati} has a unique solution on $[0,t_+(u))$ and the
  set
  \[ D_\R := \lbrace (t,y) \in \R_{\geq 0} \times \R^d \; : \; t <
    t_+(y) \rbrace \] is open in $\R_{\geq 0} \times
  \R^d$. Furthermore, by \cite[Theorem~2.14(b)]{keller-ressel2015},
  \cite[Theorem~2.17(b)]{keller-ressel2015} and \eqref{eq:expMoments},
  one has $D_\R \subset E$ and \eqref{eq:affineDef} holds for all
  $(t,u) \in D_\R$, $x \in
  D$. \cite[Theorem~2.14(a)]{keller-ressel2015} implies
  $E \subset D_\R$ and hence $E = D_\R$. This shows (i).
  $(T,\Re u) \in E$ yields $(T,\Re u) \in D_\R$ and so
  \cite[Theorem~2.26]{keller-ressel2015} implies (ii).
\end{proof}

A further consequence of \eqref{eq:expMoments} is the following:

\begin{lemma}\label{lem:affMoments} Assume \eqref{eq:expMoments}. Then
  for any $T \geq 0$, $k \in \mathbb{N}$, $x \in D$
  \begin{align}\label{eq:momentsPolyn} \E_x[|X_T|^{2 k}] & < \infty, \\
    \label{eq:momentsPolynIntegral} \E_x\left[\int_0^T|X_t|^{2 k} \dd t\right] & < \infty.
  \end{align}
\end{lemma}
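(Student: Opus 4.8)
\emph{Proof plan.} The plan is to derive the polynomial moment bounds from a \emph{local} exponential moment estimate that is uniform in $t$ on the compact interval $[0,T]$. The starting point is the preceding Lemma: under \eqref{eq:expMoments} the set $E$ is open in $\R_{\geq 0}\times\R^d$, the affine transform formula \eqref{eq:affineDef} holds on $E$, and $\phi,\psi$ are continuous on $E$ — indeed, on the open domain of existence $D_\R=E$ they are the flow of the Riccati system \eqref{eq:genRiccati} with analytic vector fields $F,R$, and continuous (even analytic) dependence on $(t,u)$ is standard ODE theory. Moreover $(t,0)\in E$ for every $t\geq 0$, trivially, since $\E_x[e^{\langle X_t,0\rangle}]=1$.

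First I would construct a ``tube'' around the time axis: since $[0,T]\times\{0\}$ is a compact subset of the open set $E$, there is $\varepsilon>0$ with $\{(t,u)\colon t\in[0,T],\ |u|\leq\varepsilon\}\subset E$. In particular $(t,\pm\varepsilon e_j)\in E$ for every $t\in[0,T]$ and every standard basis vector $e_1,\dots,e_d$ of $\R^d$. Next comes an elementary pointwise inequality: for fixed $k\in\N$ and $\varepsilon>0$ there is a constant $C=C(d,k,\varepsilon)<\infty$ with
\[
  |y|^{2k} \le d^{\,k-1}\sum_{j=1}^d |y_j|^{2k} \le C\sum_{j=1}^d\bigl(e^{\varepsilon y_j}+e^{-\varepsilon y_j}\bigr),\qquad y\in\R^d,
\]
the first step being convexity of $s\mapsto s^k$ and the second the fact that polynomials are dominated by exponentials.

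Plugging in $y=X_t$, taking $\E_x$, and using \eqref{eq:affineDef} on $E$ yields, for all $t\in[0,T]$ and $x\in D$,
\[
  \E_x\bigl[|X_t|^{2k}\bigr] \le C\sum_{j=1}^d\Bigl(e^{\phi(t,\varepsilon e_j)+\langle x,\psi(t,\varepsilon e_j)\rangle}+e^{\phi(t,-\varepsilon e_j)+\langle x,\psi(t,-\varepsilon e_j)\rangle}\Bigr) =: g_{x,k}(t).
\]
By continuity of $\phi$ and $\psi$ on $E$, the function $g_{x,k}$ is continuous, hence bounded, on $[0,T]$; set $M:=\sup_{t\in[0,T]}g_{x,k}(t)<\infty$. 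Taking $t=T$ gives \eqref{eq:momentsPolyn}. For \eqref{eq:momentsPolynIntegral}, note that $X$ has RCLL paths, hence is progressively measurable, so $(\omega,t)\mapsto|X_t(\omega)|^{2k}$ is jointly measurable and nonnegative, and Tonelli's theorem gives
\[
  \E_x\Bigl[\int_0^T|X_t|^{2k}\dd t\Bigr]=\int_0^T\E_x\bigl[|X_t|^{2k}\bigr]\dd t \le \int_0^T g_{x,k}(t)\dd t \le TM<\infty.
\]

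I do not expect a genuine obstacle. The two places needing a little care are (i) invoking the preceding Lemma to know that $\phi,\psi$ are defined and continuous on all of $E$ and that \eqref{eq:affineDef} extends there, and (ii) the compactness (tube) step, which is exactly what upgrades pointwise finiteness of exponential moments to a bound uniform in $t\in[0,T]$ — and hence what makes \eqref{eq:momentsPolynIntegral}, and not just \eqref{eq:momentsPolyn}, come out.
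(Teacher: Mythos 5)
Your proof is correct, but it takes a genuinely different route from the paper's. The paper does not use the openness of $E$ at all: it derives \eqref{eq:momentsPolyn} from the smoothness of $\phi$ and $\psi$ on $\R_+\times\U$ (via analyticity of $F,R$ and \cite[Lemma~6.5(i)]{duffie2003}) together with the classical criterion \cite[Theorem~2.16(i)]{duffie2003} that differentiability of the characteristic function at $0$ yields moments; it then obtains the uniform-in-$t$ bound needed for \eqref{eq:momentsPolynIntegral} from \cite[Lemma~A.1]{duffie2003}, which writes $\E_x[|X_t|^{2k}]$ as a sum of partial derivatives of $\phi(t,\cdot)$ and $\psi(t,\cdot)$ at $0$, each continuous in $t$. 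You instead exploit the conclusion of the preceding lemma that $E$ is open, run a compactness (tube) argument around $[0,T]\times\{0\}$ to obtain genuine two-sided exponential moments $\E_x[e^{\pm\varepsilon \langle X_t,e_j\rangle}]$ for all $t\in[0,T]$, and dominate $|y|^{2k}$ pointwise by these exponentials. Your route is more elementary and self-contained in that it avoids the derivative-of-the-characteristic-function machinery and produces the uniform bound directly from continuity of $t\mapsto\phi(t,\pm\varepsilon e_j)$ and $t\mapsto\psi(t,\pm\varepsilon e_j)$ (which is immediate since they solve the Riccati ODEs on their interval of existence); the price is that it leans on the extension of \eqref{eq:affineDef} to real arguments in $E$, a somewhat heavier input than the paper's use of $i\R^d\subset\U$, though one that is fully justified by part (ii) of the preceding lemma. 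Both arguments are sound, and your final Tonelli step for \eqref{eq:momentsPolynIntegral} matches the paper's implicit use of Fubini--Tonelli.
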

\begin{proof}
  By \cite[Lemma 5.3]{duffie2003} and \eqref{eq:expMoments}, $F$ and
  $R$ are analytic functions on $\C^d$. Thus by
  \cite[Lemma~6.5(i)]{duffie2003}, $\phi$ and $\psi$ are in
  $C^{\infty}(\R_+ \times \U)$. Combining this with
  $i \R^d \subset \U$ and \cite[Theorem~2.16(i)]{duffie2003} yields
  \eqref{eq:momentsPolyn}. By \cite[Lemma~A.1]{duffie2003}, for any
  $t \in [0,T]$, $ \E_x[|X_t|^{2 k}] $ is a sum of partial derivatives
  (up to order $k$) of $\psi(t,\cdot)$ and $\phi(t,\cdot)$ at $0$. But
  all of these are continuous (as argued above) and so
  $t \mapsto \E_x[|X_t|^{2 k}] $ is bounded on $[0,T]$. Hence
  \eqref{eq:momentsPolynIntegral} follows.
\end{proof}

\subsubsection{Time-inhomogeneous affine processes}
As it turns out, linear filtering of an affine process gives rise to a
time-inhomogeneous affine process. This class of time-inhomogeneous
Markov processes has been studied in \cite{Filipovic2005}. Similar to
the time-homogeneous case (as summarized in Section~\ref{sec:affine}),
\cite{Filipovic2005} has obtained characterizations in terms of a
martingale problem or (for conservative processes) semimartingale
characteristics. We do not repeat these here; for our purposes it is
sufficient to understand the conditions on the parameters that are
necessary and sufficient for the existence of such a process. For more
details we refer to \cite{Filipovic2005}.

A collection of parameters (depending on $t \geq 0$)
\begin{equation}\label{eq:parametersInhomogeneous}
  (a(t),\alpha(t),b(t),\beta(t),c(t),\gamma(t),\mu^0(t),\mu(t))
\end{equation}
is called admissible (or strongly admissible), if the following
(admissibility) conditions are satisfied:
\begin{itemize}
\item for any $t \geq 0$, \eqref{eq:parametersInhomogeneous} satisfies
  conditions \eqref{eq:admiss1}-\eqref{eq:admissLevyMes2},
\item $(a(t),\alpha(t),b(t),\beta(t),c(t),\gamma(t))$ are continuous
  in $t \in \R_+$,
\item the measures $\chi_k(\cdot)\mu^i(t,\cdot)$ (on
  $D \setminus \{0\}$) are weakly continuous in $t \in \R_+$ for any
  $ i \in I \cup \{0\}, \, k\in I \setminus \{i\}$,
\item the measures $\chi_k(\cdot)^2 \mu^i(t,\cdot)$ (on
  $D \setminus \{0\}$) are weakly continuous in $t \in \R_+$ for any
  $ i \in I \cup \{0\}, \, k \in (J \cup \{i\})\setminus\{0\}$.
\end{itemize}

\begin{remark} As before, $b$ and $\beta$ here denote
  $\tilde{b},\tilde{\beta}$ in
  \cite[Theorem~2.13]{Filipovic2005}. Since $\chi_k$ is bounded and
  continuous, the third continuity condition guarantees that
  $\tilde{b},\tilde{\beta}$ in \cite[Theorem~2.13]{Filipovic2005} are
  continuous if and only if $b$ and $\beta$ in
  \cite[Definition~2.5]{Filipovic2005} are continuous. Together with
  Remark~\ref{rmk:admissibility} this implies that the present
  admissibility conditions are identical with
  \cite[Definition~2.5]{Filipovic2005}.
\end{remark}

\begin{remark}
  If $c(t)=0$, $\gamma(t)=0$ for all $t \geq 0$, then the
  admissibility condition here is equivalent to
  \cite[Definition~2.4]{Kallsen2010}.
\end{remark}

By \cite[Theorem~2.13, Lemma~3.1 and Proposition~4.3]{Filipovic2005}
for any collection of parameters satisfying these conditions (and only
under these), there exists a strongly regular time-inhomogeneous
affine process $(\bar{X},(\P_{(r,x)})_{(r,x) \in \R_+ \times D})$ (a
time-inhomogeneous, stochastically continuous Markov process with an
additional regularity condition as \eqref{eq:affineRegular}, see
\cite{Filipovic2005}) with transition function $(P_{t,T})$ satisfying
for any $u \in \U$, $0\leq t \leq T$,
\begin{equation}\label{eq:affineDefInhomogeneous}
  P_{t,T}\exp(\langle u,\cdot \rangle)(x) = \exp(\Phi(t,T,u) + \langle x, \Psi(t,T,u) \rangle), \quad \forall x \in D,
\end{equation} 
where $\Phi$ and $\Psi$ solve the generalized Riccati equations
\begin{equation}\label{eq:genRiccatiInhomogeneous}\begin{aligned}
    - \partial_t \Phi(t,T,u) & = F(t,\Psi(t,T,u)), \quad \Phi(T,T,u) = 0 \\
    \partial_t \Psi(t,T,u) & = R(t,\Psi(t,T,u)) , \quad \Psi(T,T,u) =
    u, \quad 0 \leq t \leq T
  \end{aligned}
\end{equation}
with vector fields
\begin{equation}\label{eq:FandRInhomogeneous} \begin{aligned}
    F(t,u)&=  \frac{1}{2}\langle u, a(t) u \rangle + \langle b(t) , u \rangle -c(t) + \int_{D \setminus \{0\}}\left(e^{\langle \xi, u \rangle } - 1 - \langle \chi(\xi), u \rangle \right) \mu^0(t,\d \xi) \\
    R_i(t,u) & = \frac{1}{2}\langle u, \alpha^i(t) u \rangle + \langle
    \beta^i(t) , u \rangle -\gamma_i(t) \\ & \quad + \int_{D \setminus
      \{0\}}\left(e^{\langle \xi, u \rangle } - 1 - \langle \chi(\xi),
      u \rangle \right) \mu^i(t,\d \xi), \quad i = 1,\ldots, m, \\
    R_i(t,u) & = \langle \beta^i(t) , u \rangle, \quad i =
    m+1,\ldots,d,
  \end{aligned}
\end{equation}
where $\beta^i_j(t):=\beta_{j,i}(t)$.

Finally, fix $(r,x) \in \R_+ \times D$. As noted in
\cite{Filipovic2005} one may assume that $\bar{X}$ has RCLL paths,
$\P_{(r,x)}$-a.s. and so the following terminology makes sense:
Suppose $Y$ is a stochastic process on $(\Omega,\mathcal{F},\P)$ with
RCLL paths. We will say that (under $\P$) $Y$ is a time-inhomogeneous
affine process started in $(r,x)$ with admissible parameters
\eqref{eq:parametersInhomogeneous}, if the law of $Y$ under $\P$ (on
the space of RCLL-paths) is identical to the law of $\bar{X}$ under
$\P_{(r,x)}$.

\subsection{The filtering problem}
\subsubsection{Problem formulation and the Zakai
  equation}\label{subsec:filtering}
Fix $\pi_0 \in \Pm(D)$ and suppose $X$ is an affine process started
from $\pi_0$ (see Section~\ref{sec:affine}) on
$(\Omega,\mathcal{F},\P)$. Further, suppose $\F$ is a right-continuous
filtration on $(\Omega,\mathcal{F},\P)$ with respect to which $X$ is
adapted and such that $\mathcal{F}_0$ contains all $\P$-nullsets.

Let us introduce the problem of filtering $X$ given noisy observations
$Y$, as in the standard setup, see \cite{shiryaev2001} and
\cite{Bain2009}. The exposition here follows \cite{Kurtz1988}.

Define $Y$ as
\begin{equation} Y_t = \int_0^t h(X_s) \dd s + W_t, \quad t \geq 0,
\end{equation}
where $W$ is a $p$-dimensional $\F$-Brownian motion independent of
$X$, $h\colon D \to \R^p$ is measurable and
\begin{equation}\label{eq:secondmoment} \E\left[\int_0^T |h(X_s)|^2
    \dd s \right] < \infty, \end{equation}
for all $T \geq 0$. Set
\begin{align}\label{eq:obfil} \mathcal{F}_t^Y = \sigma(Y_s \; : \; 0
  \leq s \leq t) \vee \mathcal{N}, \quad t \geq 0,\end{align}

where $\mathcal{N}$ denotes the collection of $\P$-nullsets of
$(\Omega,\mathcal{F})$.

The goal of filtering theory is to calculate, for $t \in [0,\infty)$,
the conditional distribution of $X_t$ given
$\mathcal{F}_t^Y$. Formally this is described by a measure-valued
process as follows: By \cite[Theorem~2.1]{Bain2009} there exists a
$\Pm(D)$-valued $(\mathcal{F}_t^Y)_{t \geq 0}$-adapted, RCLL-process
$(\pi_t)_{t \geq 0}$ such that for any $f \in B(D)$, $t \geq 0$,
\[ \pi_t f = \E[f(X_t)|\mathcal{F}^Y_t] \quad \P\text{-a.s.} \] It can
be shown that $\pi$ satisfies the \textit{Kushner-Stratonovich
  equation}. This is a stochastic partial differential equation for
the process $\pi$, usually written in weak form, i.e. applied to test
functions $f \in D(\A)$.

Alternatively, one may consider an $\M(D)$-valued (but not
$\Pm(D)$-valued) process, which leads to the linear
Duncan-Mortensen-Zakai equation or shortly \textit{Zakai equation}:
Define
\begin{equation}\label{eq:sigmaDefAff}
  \sigma_t :=
  \exp\left(\int_0^t
    (\pi_s h)^\top \dd
    Y_s -
    \frac{1}{2}\int_0^t|\pi_s
    h|^2 \dd s\right)
  \pi_t \end{equation}
which is nonzero $\P$-a.s., for any $t \geq 0$, because
\begin{equation}\label{eq:hPiIntegrable}
  \E\left[\int_0^t |\pi_s h|^2 \dd s \right] < \infty,
\end{equation}
as can be deduced from \eqref{eq:secondmoment}.

We are now concerned with the filtering problem on the time interval
$[0,T]$, for some $T > 0$ fixed. By \eqref{eq:secondmoment} and
independence,
\begin{equation}\label{eq:changeOfMeasure}
  \frac{\d \Q}{\d \P} = \exp\left(-\int_0^T h(X_s)^\top \dd W_s - \frac{1}{2} \int_0^T |h(X_s)|^2 \dd s  \right)
\end{equation} 
defines a new probability measure $\Q$ on $(\Omega,\mathcal{F})$ that
is equivalent to $\P$ on $\mathcal{F}_T$.\footnote{See
  \cite[Example~I.6.2.4]{shiryaev2001}. Independence is crucial here,
  otherwise a Novikov' type assumption would be needed. } Furthermore,
the law of $X$ under $\P$ is the same as under $\Q$ and, on $[0,T]$
under the measure $\Q$, $Y$ is a Brownian motion independent of $X$.

It can be shown (see \cite[Exercise ~3.37]{Bain2009}) that
$\sigma_t 1$ defined in \eqref{eq:sigmaDefAff} is equal to
$ \E_\Q[\frac{\d \P}{\d \Q}|\mathcal{F}^Y_t]$. Combining this with the
abstract Bayes' rule and the definition \eqref{eq:sigmaDefAff}, one
obtains (see \cite[Proposition~3.16]{Bain2009}) that for any
$t \in [0,T]$, $f \in B(D)$,
\begin{equation}\label{eq:sigma}
  \sigma_t f =
  \E_\Q\left[\left. f(X_t)\exp
      \left(\int_0^t
        h(X_s)^\top \dd
        Y_s -
        \frac{1}{2}
        \int_0^t
        |h(X_s)|^2 \dd s
      \right)
    \right|\mathcal{F}^Y_t\right], \end{equation}
$\P$-a.s., and the \textit{Kallianpur-Striebel} formula
\begin{equation}\label{eq:KallianpurStriebel}
  \pi_t f =
  \frac{\sigma_t
    f}{\sigma_t
    1}. \end{equation}
Furthermore, $\sigma$ satisfies the \textit{Zakai equation} 
\begin{equation}\label{eq:Zakai}
  \sigma_t f = \pi_0 f + \int_0^t \sigma_s(\A f) \dd s + \int_0^t \sigma_s(h f) \dd Y_s \quad \text{for any } f \in D(\A).
\end{equation}
By \eqref{eq:hPiIntegrable} and \eqref{eq:sigmaDefAff}, $h$ is
$\sigma_t$-integrable for all $t \leq T$ and
$\int_0^T |\sigma_t h|^2 \dd s < \infty$, $\P$-a.s. Hence all terms in
\eqref{eq:Zakai} are indeed well-defined.

\subsubsection{Uniqueness for the Zakai equation}

The following result is a consequence of \cite[Theorem
4.2]{Kurtz1988}:

\begin{theorem}[Well-posedness of the Zakai equation]\label{thm:zakaiUniqueness}
  Let $h \in C(D)$, $\A$ the generator of a (conservative) affine
  process (see \eqref{eq:affineGeneratorShort}) and $\sigma$ as in
  \eqref{eq:Zakai}.  Assume \eqref{eq:secondmoment}.

  Suppose $(\rho_t)_{t \in [0,T]}$ is an
  $(\mathcal{F}^Y_t)_{t\in [0,T]}$-adapted RCLL $\M(D)$-valued process
  such that $h$ is $\rho_t$-integrable for all $t \leq T$,
  $\int_0^T |\rho_t h|^2 \dd s < \infty$, $\P$-a.s. and satisfying
  \begin{equation}
    \rho_t f = \pi_0 f + \int_0^t \rho_s(\A f) \dd s + \int_0^t \rho_s(h f) \dd Y _s, \quad \text{for any } f \in C_c^\infty(D)
  \end{equation}
  and for $f =1$ (with $\A 1:= 0$). Then $\rho_t = \sigma_t$ for all
  $t < T$, $\P$-a.s.
\end{theorem}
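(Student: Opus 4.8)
The plan is to deduce the theorem from the uniqueness theorem for the filtered martingale problem of Kurtz and Ocone, \cite[Theorem~4.2]{Kurtz1988}. Its decisive hypothesis is that the martingale problem for the signal generator be well-posed for \emph{every} initial law, with uniqueness holding even within the (larger) class of not necessarily RCLL solutions; this is exactly what Lemma~\ref{lem:martingaleProblem} — together with the strong form of the Ethier--Kurtz results invoked in its proof — provides for $(C_c^\infty(D),\A_0,\pi_0)$. The remaining work will be (i) to check the other standing assumptions of \cite{Kurtz1988}, and (ii) to translate between the un-normalized, $\M(D)$-valued Zakai formulation used here and the normalized, $\Pm(D)$-valued filtered martingale problem to which \cite[Theorem~4.2]{Kurtz1988} directly applies.

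For (i) I would record that $D=\R_+^m\times\R^{d-m}$ is a complete separable metric space; that, since $C_c^\infty(D)$ is a core for the Feller generator $\A$ (Section~\ref{sec:affine}), one has $\A_0 f=\A f\in C_0(D)\subset C_b(D)$ for $f\in C_c^\infty(D)$ and $C_c^\infty(D)$ is an algebra that is measure-determining on $D$; and that adjoining the constant function $1$ with $\A 1:=0$ makes $\A_0$ a conservative operator whose domain, in addition, separates finite measures through their total mass. The observation is of the form required there, $Y_t=\int_0^t h(X_s)\dd s+W_t$ with $W$ a Brownian motion independent of $X$ and $h\in C(D)$, and \eqref{eq:secondmoment} supplies exactly the square-integrability of $h$ along $X$ that \cite{Kurtz1988} assumes. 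Granting these, \cite[Theorem~4.2]{Kurtz1988} yields that the filtered martingale problem for $(\A_0,h,\pi_0)$ has a unique $(\mathcal{F}^Y_t)$-adapted solution, which must be $t\mapsto\pi_t$.

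For (ii), given any process $\rho$ as in the statement, I would first take $f=1$ with $\A 1:=0$ in its Zakai equation to get $\rho_t 1=1+\int_0^t(\rho_s h)^\top\dd Y_s$ for $t\le T$ — a well-defined It\^o integral by $\int_0^T|\rho_s h|^2\dd s<\infty$ — and then show, via a stochastic-exponential respectively Gronwall argument (after localizing, and comparing with the analogous equation for $\sigma_t 1$), that $\rho_t 1>0$ for all $t\le T$, $\P$-a.s. Setting $\tilde\rho_t:=\rho_t/(\rho_t 1)$, an It\^o computation — again licensed by the assumptions $h\in L^2(\rho_t)$ and $\int_0^T|\rho_t h|^2\dd t<\infty$ — should show that $\tilde\rho$ is a $\Pm(D)$-valued, $(\mathcal{F}^Y_t)$-adapted RCLL solution of the Kushner--Stratonovich equation, i.e.\ of the filtered martingale problem, for $(\A_0,h,\pi_0)$. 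Uniqueness from (i) then gives $\tilde\rho_t=\pi_t$ for all $t<T$, $\P$-a.s.; feeding this back into the linear equation for $\rho_t 1$ and comparing with the definition \eqref{eq:sigmaDefAff} of $\sigma$ (whose total mass solves the same linear equation) forces $\rho_t 1=\sigma_t 1$, hence $\rho_t=(\rho_t 1)\tilde\rho_t=\sigma_t$ for all $t<T$.

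I expect step (ii) to be the main obstacle: for a \emph{general} solution $\rho$ — not merely for $\sigma$ — one must establish strict positivity of the total mass $\rho_t 1$ and check that the It\^o differentiation passing from the Zakai to the Kushner--Stratonovich equation is justified purely from the stated integrability of $h$ against $\rho_t$. This is precisely why the theorem is phrased with those integrability conditions and with the extra test function $f=1$: without controlling the total mass one could multiply an arbitrary solution by a mean-one exponential $(\mathcal{F}^Y_t)$-martingale and manufacture a second, spurious solution.
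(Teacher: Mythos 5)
Your part (i) is exactly the paper's proof: one checks that Lemma~\ref{lem:martingaleProblem} gives well-posedness of the martingale problem for $(C_c^\infty(D),\A_0,\pi_0)$ for \emph{every} initial law (including among non-RCLL solutions), and that $h_i f \in C_0(D)$ for $f \in C_c^\infty(D)$, and then one cites \cite[Theorem~4.2]{Kurtz1988}. The issue is your part (ii), which rests on a misreading of which theorem of Kurtz--Ocone does what. In \cite{Kurtz1988}, Theorem~4.1 is the uniqueness statement for the \emph{normalized}, $\Pm(D)$-valued filtered martingale problem (Kushner--Stratonovich), while Theorem~4.2 is formulated directly for the \emph{unnormalized}, $\M(D)$-valued Zakai equation -- this is precisely why the paper remarks afterwards that the Kushner--Stratonovich uniqueness follows ``as a corollary'' from the same hypotheses. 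So the process $\rho$ in the statement already satisfies the hypotheses of \cite[Theorem~4.2]{Kurtz1988} verbatim (adaptedness, RCLL, the stated integrability of $h$ against $\rho_t$, the equation tested on $C_c^\infty(D)$ and on $f=1$), and no normalization step is needed at all.

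Beyond being unnecessary, step (ii) as you sketch it has a genuine gap that you flag but do not close: strict positivity of $\rho_t 1$ for an \emph{arbitrary} solution. Taking $f=1$ gives $\rho_t 1 = 1 + \int_0^t (\rho_s h)^\top \dd Y_s$, which under the reference measure $\Q$ is a priori only a nonnegative continuous local martingale; such processes can reach zero, and a Gronwall or stochastic-exponential argument cannot rule this out without already knowing $\rho_s h = (\rho_s 1)\,\tilde\rho_s h$ with $\tilde\rho_s h$ suitably integrable up to the putative hitting time -- i.e.\ the argument is circular at exactly the point where it matters. Controlling the total mass and passing from the Zakai to the Kushner--Stratonovich equation is the content of Kurtz and Ocone's own derivation of their Theorem~4.2 from Theorem~4.1; you should invoke that result rather than re-derive it. If you delete part (ii) and simply apply \cite[Theorem~4.2]{Kurtz1988} to $\rho$ itself after the verifications in part (i), your argument coincides with the paper's proof.
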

\begin{proof}
  Define $D(\A_0):=C_c^\infty(D)$ and $\A_0$ the restriction of $\A$
  to $D(\A_0)$. Then by Lemma~\ref{lem:martingaleProblem}, for any
  $\pi_0 \in \Pm(D)$, the martingale problem for
  $(D(\A_0),\A_0,\pi_0)$ is well-posed. Furthermore, for any
  $f \in D(\A_0)$, $h_i f \in C_0(D)$ for $i = 1,\ldots, p$ and so the
  assumptions of \cite[Theorem 4.2]{Kurtz1988} are indeed satisfied.
\end{proof}
Since the assumptions for \cite[Theorem 4.1]{Kurtz1988} are the same
as for \cite[Theorem 4.2]{Kurtz1988}, as a corollary one also obtains
a uniqueness result for the Kushner-Stratonovich equation.

Let us point out that Theorem~\ref{thm:zakaiUniqueness} holds in the
setting considered in Section~\ref{sec:linearizedFiltering}. Taking
$h(x)=x$, $\pi_0 = \delta_x$ for some $x \in D$ and assuming that the
jump-measures of the affine process satisfy \eqref{eq:expMoments}, one
obtains from Lemma~\ref{lem:affMoments} that \eqref{eq:secondmoment}
is indeed satisfied.

\subsubsection{Robust filtering}
Thanks to the uniqueness result for the Zakai equation in
Theorem~\ref{thm:zakaiUniqueness}, theoretically the filtering problem
is settled: One finds a solution to the Zakai equation and uses the
Kallianpur-Striebel formula \eqref{eq:KallianpurStriebel} to calculate
the filter. However, in practice one is given a fixed
$y \in C([0,T],\R^p)$ (of \textit{finite variation}), whereas
\eqref{eq:sigma} only specifies the filter $\P$-a.s. Thus a definition
of \eqref{eq:sigma} for \textit{all} $y \in C([0,T],\R^p)$ is needed.

Let us briefly review the main result of \cite{Davis1980}.  See
\cite[Chapter 5]{Bain2009} and \cite[Section~1.4]{vanHandel2007} for
further references on robust filtering. Suppose $h \in D(\A)$ so that
$h(X)$ is a semimartingale. Since $X$ and $Y$ are independent, one can
integrate by parts
\begin{equation} \label{eq:productRule} \int_0^t h(X_s)^\top \dd Y_s =
  Y_t^\top h(X_t) - \int_0^t Y_s^\top \dd h(X_s)
\end{equation}
and rewrite $\sigma$ in \eqref{eq:sigma} as
\[ \sigma_t f = \E_\Q\left[\left. f(X_t)\exp \left(Y_t^\top h(X_t) -
        \int_0^t Y_s^\top \dd h(X_s)- \frac{1}{2} \int_0^t |h(X_s)|^2
        \dd s \right) \right|\mathcal{F}^Y_t\right].  \] Recalling
that $X$ and $Y$ are independent under $\Q$ and $X$ has the same
distribution under $\P$ as under $\Q$, the conditional expectation is
actually given as $ \E\left.[F(X,y)]\right\rvert_{y=Y} $ for a
suitable function $F \colon D \times \R^p \to \mathbb{R}$. In fact,
the following \textit{robustness} property has been established in
\cite{Clark1978}, \cite{Clark2005}: Define the \textit{pathwise
  filtering functional}
$\sigma_t \colon B(D) \times C([0,T],\R^d) \to \mathbb{R}$ by
\begin{equation}\label{eq:pathwiseFiltering}\sigma_t(f,y)=\E\left[f(X_t)\exp
    \left(y_t^\top h(X_t) - \int_0^t y_s^\top \dd h(X_s)- \frac{1}{2}
      \int_0^t |h(X_s)|^2 \dd s \right)\right], \end{equation}
then $\sigma_t(f,\cdot)/\sigma_t(1,\cdot)$ is locally Lipschitz continuous and
\begin{equation}\label{eq:pathwiseFiltering2}
  \frac{\sigma_t(f,Y)}{\sigma_t(1,Y)} = \E[f(X_t)|\mathcal{F}_t^Y], \quad \P\text{-a.s.} \end{equation}
See also \cite{Crisan2013} for an extension to multidimensional observation and correlated noise.

In \cite{Davis1980} the following observation is made: Fix
$y \in C[0,T]$ and define a two-parameter semigroup of operators on
$B(D)$ by
\begin{equation}\label{eq:davissemigroup} T_{s,t}^y
  f(x)=\E_{x}\left[f(X_{t-s})\exp\left(-\int_s^t y_u^\top \dd
      h(X_{u-s}) - \frac{1}{2} \int_s^t |h(X_{u-s})|^2 \dd u
    \right)\right], \end{equation}
for $t \geq s \geq 0$, $x \in D$.
Then 
\begin{equation}\label{eq:davisresult} \sigma_t(f,y) = \int_D
  T_{0,t}^y(e^{y(t)h}f)(x) \pi_0(\d x) \end{equation}
and, this is the main result of \cite{Davis1980}, the (extended) generator $\mathcal{A}^y_t$ of the semigroup $T_{s,t}^y$ is given by
\[ \A_t^y f = e^{y(t)h}(\A - \frac{1}{2}h^2)(e^{-y(t)h}f). \]

This is closely related to applying a Doss-Sussmann method (see
e.g. \cite[Theorem 28.2]{Rogers2000}) to the Zakai equation, as
explained in \cite{Davis2011}.

\section{The linearized filtering
  functional}\label{sec:linearizedFiltering}

In this section we introduce and study a computationally tractable
approximation of the pathwise filtering functional
\eqref{eq:pathwiseFiltering} when both $X$ and $h$ are
affine. Throughout this section $X$ is an affine process on
$(\Omega,\mathcal{F},\P)$ started from $\pi_0 \in \Pm(D)$ with
admissible parameters \eqref{eq:collectionOfParameters} and $F$, $R$
are as in \eqref{eq:FandR}. If $\pi_0 = \delta_{x}$ for $x \in D$, we
write $\P_x$ for $\P$.

\subsection{Definition and main results}
\subsubsection{Definition of the approximate filter}
Fix an observation $y \in C([0,\infty),\R^d)$ with $y(0)=0$ and
functions $\gamma \in C([0,\infty),\R^d)$, $c \in
C([0,\infty),\R)$. The \textit{linearized filtering functional} (LFF)
$\rho$ is defined as
\begin{equation}\label{eq:modifiedZakaiFilter}\begin{aligned}
    \rho_t(f,y)& =\E \left[f(X_t)\exp \left(y_t^\top X_t - \int_0^t
        y_s^\top \dd X_s - \int_0^t \gamma_s^\top X_s - c_s \dd s
      \right) \right]
  \end{aligned}
\end{equation}
for any $t \geq 0$ and $f\colon D \to \R$ measurable such that the
right hand side of \eqref{eq:modifiedZakaiFilter} is well-defined
(e.g. $f \geq 0$). If $\rho_t(1,y)$ is finite, define the approximate
pathwise filter (the \textit{affine functional filter} or AFF) by
\begin{equation}\label{eq:approximateFilter} \bar{\pi}_t(f,y)=
  \frac{\rho_t(f,y)}{\rho_t(1,y)}. \end{equation}
If $\pi_0 = \delta_{x}$ for $x \in D$, we write $ \rho^x_t(f,y)$ for $\rho_t(f,y)$ and $ \bar{\pi}_t^x(f,y)$ for $ \bar{\pi}_t(f,y)$.
\subsubsection{Heuristic motivation}
The linearized filtering functional \eqref{eq:modifiedZakaiFilter} is
the same as the pathwise filtering functional
\eqref{eq:pathwiseFiltering} for $h(x)=x$, but with $\frac{1}{2}|x|^2$
approximated by the affine function $\gamma_s x+c_s$. The motivation
for studying $\rho_t$ is the following: if for some $t>0$, $x_0 \in D$
and (small) $\varepsilon > 0$,
$\P(\{X_s \in B_\varepsilon(x_0) \, \forall s \in [0,t]\})$ is almost
$1$, then \eqref{eq:modifiedZakaiFilter} and
\eqref{eq:pathwiseFiltering} (with $\gamma_s = x_0$ and
$c_s=\frac{x_0^2}{2}$) are very close. Consequently,
\eqref{eq:pathwiseFiltering2} implies that also the approximate filter
$\bar{\pi}_t(f,Y)$ should be close to $\pi_t(f)$.
\subsubsection{Fourier filtering}
The key point is that \eqref{eq:modifiedZakaiFilter} is
computationally tractable, since one can calculate the Fourier
coefficients of \eqref{eq:modifiedZakaiFilter} by solving a system of
generalized Riccati equations:

\begin{theorem}\label{thm:genRiccatiFilter}
  Assume \eqref{eq:expMoments} holds. Let $u \in \C^d$ and
  $T \in \R_+$. Suppose $\Phi \in C^1([0,T],\R)$ and
  $\Psi \in C^1([0,T],\R^d)$ solve
  \begin{equation}\label{eq:genRiccatiFilter}
    \begin{aligned}
      - \partial_t \Phi(t,T,u) & = F(\Psi(t,T,u)-y_t)-c_t, \quad \Phi(T,T,u) = 0 \\
      -\partial_t \Psi(t,T,u) & = R(\Psi(t,T,u)-y_t)-\gamma_t , \quad
      \Psi(T,T,u) = u+y_T, \quad 0 \leq t \leq T.
    \end{aligned}
  \end{equation}
  Then for any $x \in D$, the Fourier coefficient of
  $\rho_T^x(\cdot,y)$ is well-defined and given as
  \begin{equation}\label{eq:FourierCoeff} \rho^x_T(\exp(\langle u ,
    \cdot \rangle),y) = \exp(\Phi(0,T,u)+\langle x, \Psi(0,T,u)\rangle
    ). \end{equation}
  Furthermore, there exists $T_0>0$ such that for all $u \in i \R^d$ and $T\leq T_0$, the system \eqref{eq:genRiccatiFilter} has a unique solution on $[0,T]$.
\end{theorem}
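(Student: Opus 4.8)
The plan is to prove the two assertions separately. For the identity \eqref{eq:FourierCoeff}, the natural strategy is to rewrite the exponential weight in \eqref{eq:modifiedZakaiFilter} in terms of an equivalent change of measure under which $X$ becomes a time-inhomogeneous affine ``process'' (not necessarily conservative, not necessarily admissible), and then invoke the generalized Riccati description \eqref{eq:genRiccatiInhomogeneous}–\eqref{eq:FandRInhomogeneous}. Concretely, using the semimartingale decomposition of $X$ with characteristics \eqref{eq:characteristics} and integration by parts (as in \eqref{eq:productRule}), the stochastic integral $\int_0^t y_s^\top \dd X_s$ can be expressed via $Y_t^\top X_t - \int_0^t \cdots$, so that the weight becomes a stochastic exponential plus a drift correction involving $F$ and $R$ evaluated at the shifted argument $\Psi(\cdot,T,u) - y_\cdot$. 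First I would fix $T$ and $u \in i\R^d$ (where integrability is automatic by \eqref{eq:expMoments} and Lemma~\ref{lem:affMoments}), apply the affine transform formula for the shifted/tilted process — equivalently, differentiate $t \mapsto \rho_t^x(\exp(\langle \Psi(t,T,u), \cdot\rangle), y)$ and check it is constant in $t$ using \eqref{eq:genRiccatiFilter} and the generator \eqref{eq:affineGeneratorShort} — and evaluate at $t=0$ and $t=T$ to obtain \eqref{eq:FourierCoeff}. The ``$c_t$, $\gamma_t$'' terms simply contribute additive linear corrections to $F$ and $R$, which is why they can be absorbed into the inhomogeneous vector fields.

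For the local existence and uniqueness of \eqref{eq:genRiccatiFilter}, the key observation is that, after the time reversal $t \mapsto T-t$, the system is an ODE with right-hand side $(F(\cdot - y_{T-t}) - c_{T-t}, R(\cdot - y_{T-t}) - \gamma_{T-t})$, and by \eqref{eq:expMoments} together with \cite[Lemma~5.3]{duffie2003}, $F$ and $R$ are real-analytic (in particular locally Lipschitz) on all of $\C^d$; the time dependence is continuous since $y$, $c$, $\gamma$ are. Hence the Picard–Lindelöf theorem gives, for each initial datum $u + y_T$, a unique solution on a maximal interval $[0, t_+(u))$. The remaining point is to produce a $T_0 > 0$ that works \emph{uniformly} for all $u \in i\R^d$: here I would fix a compact neighborhood $K$ of the set $\{y_T : T \le 1\} \cup \{0\}$ in $\C^d$, bound $\sup_K(|F| + |R|)$ and a local Lipschitz constant on a slightly larger compact set, and use the standard a-priori estimate from Picard–Lindelöf to get a lower bound on the existence time that depends only on these bounds and on $\|y\|_\infty$, $\|c\|_\infty$, $\|\gamma\|_\infty$ over $[0,1]$ — crucially not on $u$ itself, because for $u \in i\R^d$ the initial datum $u + y_T$ has real part confined to the compact set $\{y_T : T \le 1\}$ and the imaginary part enters $F$, $R$ only through the bounded-variation terms and the quadratic term whose growth is controlled on the relevant sublevel sets.

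The main obstacle I anticipate is the second part: making the uniform-in-$u$ existence time rigorous. The naive a-priori bound from Grönwall degrades as $|u| \to \infty$ along $i\R^d$ because of the quadratic terms $\langle u, a u\rangle$ and $\langle u, \alpha^i u\rangle$ in $F$ and $R$. The resolution is that for purely imaginary arguments these quadratic forms are bounded (indeed $\Re\langle iv, a\, iv\rangle = -\langle v, a v\rangle \le 0$), and more generally one should track the real and imaginary parts of $\Psi$ separately: the real part stays in a bounded region for a short time (controlled by the $b$, $\beta$, $y$, $\gamma$ terms and the dissipativity coming from $a \succeq 0$, $\alpha^i \succeq 0$), and once the real part is bounded the imaginary part cannot blow up in finite time because the vector field is then globally Lipschitz in the imaginary direction on that region. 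Alternatively — and this is likely the cleanest route, matching the phrasing ``local existence and uniqueness'' and the references to \cite{filipovic09} and \cite{keller-ressel2015} — one can cite the general comparison/existence theory for generalized Riccati equations alluded to in Section~\ref{sec:proofsAff}, which already handles exactly this uniformity by comparison with the homogeneous Riccati flow; then the only work left is the reduction of \eqref{eq:genRiccatiFilter} to that framework via the substitution $\tilde\Psi = \Psi - y$.
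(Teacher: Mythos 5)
Your overall strategy coincides with the paper's (an exponential tilt turning $X$ into a time-inhomogeneous affine process for \eqref{eq:FourierCoeff}, and time reversal plus control of real parts for existence), but both halves have a genuine gap. For the first half, the It\^o/``check it is constant'' computation only shows that the relevant exponential process is a \emph{local} martingale; to pass to the identity of expectations \eqref{eq:FourierCoeff} you must know that
$\tilde E_t=\exp\bigl(\int_0^t g_s^\top \dd X_s-\int_0^t F(g_s)+\langle X_s,R(g_s)\rangle \dd s\bigr)$ with $g_s=\Psi(s\wedge T,T,u)-y_{s\wedge T}$ is a \emph{true} martingale. This is the main technical content of that part of the proof: the paper first verifies that the tilted parameters are strongly admissible and satisfy a uniform integrability condition on the big jumps (Lemma~\ref{lem:admissible}), and then invokes the Kallsen--Muhle-Karbe criterion (Proposition~\ref{lem:truemartingale}). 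Your proposal asserts the equivalent change of measure without justifying it, which is precisely the step that cannot be skipped. (Also note the theorem asserts \eqref{eq:FourierCoeff} for any $u\in\C^d$ for which a $C^1$ solution exists, not only $u\in i\R^d$.)

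For the uniform-in-$u$ existence time, the mechanism you propose does not work as stated. Picard--Lindel\"of with ``the initial datum $u+y_T$ has real part confined to a compact set'' gives nothing uniform, because $|R(u+y_T)|$ grows quadratically in $|\Im u|$ (the modulus of $\langle iv,\alpha^j\, iv\rangle=-\langle v,\alpha^j v\rangle$ is unbounded even though its real part is $\le 0$), so the local existence time from that estimate degenerates as $|u|\to\infty$; and the claim that the field is ``globally Lipschitz in the imaginary direction'' once the real part is bounded is false, since the real and imaginary parts remain coupled through the quadratic terms. The paper's resolution is different in structure: $T_0$ is fixed as an existence time for the single solution with $u=0$; the $J$-components solve a \emph{linear} ODE and hence exist globally; the comparison principle of Mayerhofer--Muhle-Karbe--Smirnov together with $\Re R_i(z)\le R_i(\Re z)$ gives the one-sided bound $\Re\tilde\Psi_i(t,u)\le\tilde\Psi_i(t,0)$ for $i\in I$; and then a Lyapunov estimate $\partial_t|\tilde\Psi_I|^2\le c_0(c_1+|\tilde\Psi_I|^2)$ follows from Lemma~\ref{lem:auxLem1}, whose bound $\Re\langle\bar u_I,R_I(u)\rangle\le g(\Re u)(1+|u_J|^2)(1+|u_I|^2)$ depends only on the \emph{positive part} of $\Re u_I$ --- so an upper bound on the real part suffices, and Gronwall rules out blow-up before $T_0$. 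Finally, your fallback of ``citing the general comparison/existence theory'' is explicitly unavailable here: as the paper remarks, \eqref{eq:genRiccatiFilter} corresponds to a time-inhomogeneous, non-conservative system violating the admissibility conditions \eqref{eq:ccond} and \eqref{eq:gamcond}, so the existing results must be re-proved in this setting rather than quoted.
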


The proof of Theorem~\ref{thm:genRiccatiFilter} is postponed to
Section~\ref{sec:mainproofs} below. Let us briefly discuss how to use
Theorem~\ref{thm:genRiccatiFilter} in practice, relate it to the
literature and discuss its assumptions.

\begin{remark} Suppose $f \colon D \to \C$ is given as
  \[ f(y) = \int_{\R^d} e^{i \langle v, y \rangle} \hat{f}(v) \dd v,
    \quad y \in D \] for some $\hat{f} \colon \R^d \to \C$
  integrable. Then for any $T > 0$ small enough, by
  Theorem~\ref{thm:genRiccatiFilter}, definition
  \eqref{eq:modifiedZakaiFilter} and Fubini's theorem
  \[ \rho^x_T(f,y) = \int_{\R^d} \rho^x_T(\exp(\langle iv , \cdot
    \rangle),y) \hat{f}(v) \dd v = \int_{\R^d} e^{\Phi(0,T,i
      v)+\langle x, \Psi(0,T,i v)\rangle}\hat{f}(v) \dd v. \] This is
  analogous to the Fourier method used in option pricing in the
  framework of affine models.
\end{remark}

\begin{remark} Expressions of type \eqref{eq:FourierCoeff} are called
  affine transform formulas in the literature, see
  e.g. \cite{keller-ressel2015} and the references therein. Note that
  the present result is not covered in the literature, since the
  Riccati equations \eqref{eq:genRiccatiFilter} are time-inhomogeneous
  and correspond to a non-conservative affine ``process'' for which
  the admissiblity conditions \eqref{eq:ccond} and \eqref{eq:gamcond}
  are not necessarily satisfied.
\end{remark}

\begin{remark} In general, it does not hold that
  $\rho_T(1,y) < \infty$ for all $T > 0$ and so the statement of
  Theorem~\ref{thm:genRiccatiFilter} really just holds up to a finite
  $T_0$ (depending on $y$). To see this, let $u \in \R^d$ and consider
  $y_s = u s$, $c_s = \gamma_s = 0$ for all $s \geq 0$. Then the
  product rule (as in \eqref{eq:productRule}) and $\dot y_s = u$ show
  \[ \rho^x_T(1,y)= \E_x\left[\exp\left(u^\top \int_0^T X_s \dd
        s\right)\right] \] which is not necessarily finite. For
  example, if $d=m=1$ and $X$ is a CIR process (see
  Section~\ref{sec:CIR}) with parameters $\beta > 0$, $b \geq 0$ and
  $\sigma > 0$, then for $u < \beta^2/(2 \sigma^2)$ and $T$ large
  enough (satisfying $\tanh(\gamma T /2)\geq \gamma / \beta $ with
  $\gamma = \sqrt{\beta^2-2\sigma^2 u}$) the expectation is not
  finite, see \cite{friz2010d} or \cite{dufresne2001}.

  Finally, note that \eqref{eq:expMoments} could be weakened to the
  following assumption: there exists $V \subset \R^d$ open with
  $0 \in V$ such that \eqref{eq:expMoments} holds for $u \in V$
  (instead of all $u \in \R^d$).

\end{remark}

\subsubsection{The smoothing distribution} Our approximation
\eqref{eq:modifiedZakaiFilter} and \eqref{eq:approximateFilter} also
gives rise to an approximation of the smoothing distribution, i.e. the
distribution of $X_{[0,t]}$ conditional on $\mathcal{F}^Y_t$.

Fix $t>0$ and denote by $D[0,t]$ the set of RCLL-mappings
$[0,t] \to D$.  Consider $G \colon D[0,t] \to \R$ bounded,
measurable\footnote{More precisely, for $s \in [0,T]$ define
  $Y_s \colon D[0,t] \to \R$ by $Y_s(\omega):=\omega(s)$ and equip
  $D[0,t]$ with the $\sigma$-algebra generated by
  $(Y_s)_{s \in [0,t]}$.} and, analogously to
\eqref{eq:modifiedZakaiFilter} and \eqref{eq:approximateFilter} define
\begin{equation}\label{eq:modifiedZakaiSmoother}\begin{aligned}
    \rho_t(G,y)& =\E \left[G(X_{[0,t]})\exp \left(y_t^\top X_t - \int_0^t y_s^\top \dd X_s - \int_0^t \gamma_s^\top X_s - c_s \dd s  \right) \right] \\
    \bar{\pi}_t(G,y) & = \frac{\rho_t(G,y)}{\rho_t(1,y)}
  \end{aligned}
\end{equation}
for any $t \geq 0$ such that $\rho_t(1,y) < \infty$. Then
$\bar{\pi}_t(\cdot,y)$ is a probability measure on $D[0,t]$ and an
approximation to the smoothing distribution. Again, if
$\pi_0 = \delta_{x}$ for $x \in D$, we write $ \rho^x_t(G,y)$ for
$\rho_t(G,y)$ and $ \bar{\pi}_t^x(G,y)$ for $ \bar{\pi}_t(G,y)$.

The following result shows that $\bar{\pi}_t(\cdot,y)$ coincides with
the the distribution on $D[0,t]$ of a time-inhomogeneous affine
process. It will be used for the calculation of (approximate)
conditional moments in Section~\ref{sec:CIR} and \ref{sec:Wishart}
below. To formulate it, define
\[\begin{aligned}
    \widehat{\pi_0}(z) = \int_D e^{\langle x, z \rangle} \pi_0(\d x) ,
    \quad \text{ for } z \in D_{\pi_0} = \{z \in \C^d \; : \;
    |\exp(\langle \cdot, z \rangle)| \in L^1(D,\pi_0)
    \}. \end{aligned}\]

\begin{theorem}\label{thm:timeInhomogeneousAffine}  Let $T_0 > 0$, $t \in (0,T_0]$ and $\Psi(\cdot,t,0)$ as in Theorem~\ref{thm:genRiccatiFilter}. Suppose $\Psi(0,t,0) \in D_{\pi_0}$.
  Then for any $G \in B(D[0,t])$,
  \[ \bar{\pi}_t(G,y) = \frac{\int_D e^{\langle x, \Psi(0,t,0)
        \rangle} \E_{\mathbb{Q}_{x}^{y,t}}[G(X_{[0,t]})] \pi_0(\d
      x)}{\widehat{\pi_0}(\Psi(0,t,0))},\] where under
  $\mathbb{Q}_{x}^{y,t}$, $X$ is a time-inhomogeneous affine process
  started from $(0,x)$ with admissible parameters
  \[ (a(s),\alpha(s),b(s),\beta(s),0,0,\mu^0(s),\mu(s))_{s \geq 0} \]
  defined by \eqref{eq:parametersFilter} below with
  $g(s):=\Psi(s\wedge t,t,0)-y_{s \wedge t}$ for $s \geq 0$.
\end{theorem}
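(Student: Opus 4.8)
The plan is to realize $\bar\pi_t(\cdot,y)$ as the law of an explicitly identified time-inhomogeneous affine process by an Esscher-type change of measure applied to the representation of $\rho_t$ in \eqref{eq:modifiedZakaiSmoother}. The starting point is to rewrite the exponential weight in \eqref{eq:modifiedZakaiSmoother} in a ``forward'' form. Using the product rule as in \eqref{eq:productRule} (valid since $X$ is a semimartingale under $\P$ and $y$ has finite variation) one has $y_t^\top X_t - \int_0^t y_s^\top \dd X_s = \int_0^t X_s^\top \dd y_s$, so that for $G \in B(D[0,t])$
\[ \rho_t(G,y) = \E\!\left[G(X_{[0,t]}) \exp\!\left( \int_0^t X_s^\top \dd y_s - \int_0^t (\gamma_s^\top X_s + c_s)\,\dd s\right)\right]. \]
The key is that the functional multiplying $G$ is (up to the deterministic factor $e^{-\int_0^t c_s \dd s}$) a positive functional of $X_{[0,t]}$ whose normalization can be computed via the generalized Riccati equations from Theorem~\ref{thm:genRiccatiFilter}. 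Indeed, taking $G=1$ and $u=0$ there gives $\rho_t(1,y) = \int_D e^{\langle x,\Psi(0,t,0)\rangle}\pi_0(\dd x) = \widehat{\pi_0}(\Psi(0,t,0))$, which is exactly the denominator; the hypothesis $\Psi(0,t,0)\in D_{\pi_0}$ guarantees this is finite and nonzero, and (shrinking $T_0$ if necessary) that the Riccati system is solvable on $[0,t]$.

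Next I would define the candidate measure $\Q^{y,t}_x$ via a Radon–Nikodym density with respect to $\P_x$. The natural choice is the (conservative, RCLL) martingale built from the exponential functional: set
\[ Z^{y,t}_s := \exp\!\Big( \langle g(s), X_s\rangle - \langle g(0), X_0\rangle + \int_0^s \big[ \text{(drift/compensator correction)}\big] \dd r \Big), \quad s\in[0,t], \]
where $g(s) = \Psi(s\wedge t, t, 0) - y_{s\wedge t}$ as in the statement. The correction term in the exponent is dictated by Itô's formula for affine semimartingales (using the characteristics \eqref{eq:characteristics}): applying Itô to $\langle g(s), X_s\rangle$ and matching with the integrand $X_s^\top \dot y_s - \gamma_s^\top X_s$ forces the correction to be precisely $F(g(s)) + \langle \dot g(s) - R(g(s)), X_s\rangle$-type expressions, which by the Riccati equation \eqref{eq:genRiccatiFilter} (written for $\Psi(\cdot,t,0)$) simplify so that $Z^{y,t}$ is a true $\P_x$-martingale with $Z^{y,t}_0$ deterministic and $\E_x[Z^{y,t}_t \cdot (\text{const})] = \rho^x_t(1,y)/e^{\langle x,\Psi(0,t,0)\rangle}\cdot(\ldots)$, i.e. the density normalizes correctly. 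Under $\Q^{y,t}_x$ I would then read off the semimartingale characteristics of $X$ by the standard Girsanov-for-semimartingales computation: an exponential tilt of an affine process by $e^{\langle g(s), X_s\rangle}$ shifts the drift by $a(s)g(s)$ (resp. $\alpha^i(s)g(s)$ in the state-dependent part), tilts the jump measures by $e^{\langle \xi, g(s)\rangle}$, and leaves $a,\alpha$ unchanged — these are exactly the parameters \eqref{eq:parametersFilter}, which are admissible because the tilt preserves the positive-semidefiniteness and support conditions \eqref{eq:admiss1}–\eqref{eq:admissLevyMes2} while the continuity in $s$ follows from continuity of $g$ (hence of $\Psi(\cdot,t,0)$ and $y$). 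By the characterization of time-inhomogeneous affine processes recalled from \cite{Filipovic2005}, $X$ under $\Q^{y,t}_x$ is the asserted time-inhomogeneous affine process started from $(0,x)$.

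Finally, assembling the pieces: for $G\in B(D[0,t])$,
\[ \rho^x_t(G,y) = e^{\langle x, \Psi(0,t,0)\rangle}\, \E_{\Q^{y,t}_x}[G(X_{[0,t]})], \]
by construction of the density; integrating against $\pi_0(\dd x)$ and dividing by $\rho_t(1,y) = \widehat{\pi_0}(\Psi(0,t,0))$ gives the claimed formula for $\bar\pi_t(G,y)$. I expect the main obstacle to be the martingale property of $Z^{y,t}$ and the integrability needed to justify the change of measure on all of $[0,t]$: one must control $\E_x[\sup_{s\le t} Z^{y,t}_s]$ or at least verify the conditions for a nonnegative supermartingale with constant expectation, using the exponential-moment assumption \eqref{eq:expMoments} together with Lemma~\ref{lem:affMoments} and the fact that $g$ is bounded on $[0,t]$; a clean route is to avoid the martingale question entirely by defining $\Q^{y,t}_x$ directly through $\dd\Q^{y,t}_x/\dd\P_x = G$-free weight normalized by Theorem~\ref{thm:genRiccatiFilter}, and then identify its characteristics by computing conditional characteristic functions of increments via the same Riccati argument — this reduces everything to the already-proven Theorem~\ref{thm:genRiccatiFilter} applied on subintervals $[s,t]$.
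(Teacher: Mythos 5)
Your proposal follows essentially the same route as the paper: rewrite the exponential weight in \eqref{eq:modifiedZakaiSmoother} using the Riccati solution $\Psi(\cdot,t,0)$, recognize the result as $e^{\langle X_0,\Psi(0,t,0)\rangle}$ times a deterministic factor times an exponential martingale density, change measure, and normalize. Two small remarks on the bookkeeping: the paper obtains the key identity by integrating by parts against the $C^1$ function $\Psi(\cdot,t,0)$ (this is \eqref{eq:auxEqAff4} with $u=0$), which needs no finite-variation assumption on $y$ — your opening rewrite $y_t^\top X_t-\int_0^t y_s^\top\dd X_s=\int_0^t X_s^\top\dd y_s$ imposes one that the theorem does not make; and your intermediate identities drop the deterministic factor $e^{\Phi(0,t,0)}=\exp(\int_0^t F(g_s)-c_s\dd s)$ from both $\rho_t(G,y)$ and $\rho_t(1,y)$, which is harmless only because it cancels in the ratio.

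The one substantive gap is exactly the point you flag: the true-martingale property of the density $E$ in \eqref{eq:Edef} and, equally importantly, the identification of the law of $X$ under the tilted measure as the time-inhomogeneous affine process with parameters \eqref{eq:parametersFilter}. In the paper this is the content of Proposition~\ref{lem:truemartingale}, whose proof requires verifying the admissibility and continuity of the tilted parameters (Lemma~\ref{lem:admissible}) and the integrability conditions of \cite[Theorem~4.1]{Kallsen2010}; it is not a routine Novikov-type check. Your proposed shortcut — define $\Q_x^{y,t}$ by normalizing the weight and then identify it by computing conditional characteristic functions of increments via Theorem~\ref{thm:genRiccatiFilter} on subintervals — does not genuinely avoid this: normalizing already presupposes the expectation of the weight is what the Riccati formula says (i.e.\ the martingale property), and matching characteristic functions of increments only pins down finite-dimensional marginals, whereas the theorem asserts the full path-space law (needed since $G$ is an arbitrary bounded functional on $D[0,t]$); one would still need a uniqueness/well-posedness argument of the type delivered by the measure-change theorem of \cite{Kallsen2010}. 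So the architecture is right, but the proof is incomplete without supplying Proposition~\ref{lem:truemartingale} or an equivalent.
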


\begin{remark}
  If $\pi_0 = \delta_{x}$ for $x \in D$, then $D_{\pi_0} = \C^d$,
  $\widehat{\pi_0}(z)= e^{\langle x, z \rangle}$ and so
  Theorem~\ref{thm:timeInhomogeneousAffine} implies
  \begin{equation}\label{eq:smoothingApprox} \bar{\pi}_t^x(G,y) =
    \E_{\mathbb{Q}_{x}^{y,t}}[G(X_{[0,t]})].\end{equation}
\end{remark}

\begin{remark} As a simple example, consider a CIR process (see
  Section~\ref{sec:CIR}) started in $x >
  0$. Theorem~\ref{thm:timeInhomogeneousAffine} implies that for
  $t \leq T_0$ the approximate smoothing distribution is given by
  \eqref{eq:smoothingApprox}. Under $\mathbb{Q}_{x}^{y,t}$ the process
  $X$ is the unique solution to
  \begin{equation}\label{eq:CIRNewdynamics}
    d X_s = b + \beta X_s +u(s,X_s)\d s + \sigma \sqrt{X_s} \d B_s , \quad X_0 = x,
  \end{equation}
  where $u(s,x):= \sigma^2 (\Psi_{s\wedge t}-y_{s \wedge t}) x$,
  $\Psi_s:=\Psi(s,t,0)$ solves (the second part of)
  \eqref{eq:genRiccatiFilter} and $B$ is a Brownian motion under
  $\mathbb{Q}_{x}^{y,t}$. Thus, the approximate smoothing distribution
  is the distribution (on path space) of a new process, which is
  obtained by inserting the additional drift term $u(s,X_s)$ in the
  original SDE \eqref{eq:CIRdynamics}.

  From \cite[Chapters~1.4.3 and 4.2]{vanHandel2007} one obtains
  formally a representation analogous to \eqref{eq:smoothingApprox}
  for the exact smoothing distribution, the only difference being the
  choice of $u$ in \eqref{eq:CIRNewdynamics}. However, calculating the
  function $u$ in this case requires solving a PDE. For the
  approximate filter $u$ can be obtained by solving an ODE, which is
  an enormous reduction of complexity.
\end{remark}

\subsubsection{An alternative point of view} To clarify further the
relation to \cite{Davis1980}, let $x_0 \in \R_{++}^m \times \R^n$,
$c_0 > 0$ and define $H(x) = (x_0)_I^\top x_I + c_0$ and
\begin{equation}\label{eq:modifiedZakaiFunctional} \bar{T}_t^y
  f(x)=\E_{x}\left[f(X_t)\exp\left(-\int_0^t y_u^\top \dd X_u -
      \int_0^t H(X_u) \dd u \right) \right], \end{equation}
so that, in analogy to \eqref{eq:davisresult} it holds that (with $\gamma^\top = ((x_0)_I^\top,0))$ and $c = c_0$ in \eqref{eq:modifiedZakaiFilter})
\[ \rho_t^x(f,y) = \bar{T}_t^y(\exp(\langle y_t ,\cdot
  \rangle)f)(x). \] Thus we have approximated $T_{0,t}^y$ in
\eqref{eq:davissemigroup} by $\bar{T}_t^y$. Now if $\beta^i=0$ for
$i \in J$, then \eqref{eq:modifiedZakaiFunctional} corresponds to a
non-conservative, time-inhomogeneous affine process:

\begin{proposition}\label{thm:relationToDavis} Assume \eqref{eq:expMoments} holds and $\beta^i = 0$ for $i \in J$. Then there exists $c_0>0, T> 0$ such that for all $t \in [0,T]$, $\bar{T}_t^y$ in \eqref{eq:modifiedZakaiFunctional} satisfies
  $\bar{T}_t^y = P_{0,t}^y$, where $(P_{s,t}^y)$ is the transition
  semigroup of a time-inhomogeneous affine process with (admissible)
  parameters \eqref{eq:parametersInhomogeneous} defined for all
  $t \geq 0$ by \eqref{eq:parametersFilter} below with
  $g(t)=-y_{t\wedge T}$ and
  \begin{equation}
    \begin{aligned}
      c(t) & =  c_0 - F(-y_t) \\
      \gamma^i(t) & = x_0^i - R_i(-y_t) , \quad i \in I.
    \end{aligned}
  \end{equation}
\end{proposition}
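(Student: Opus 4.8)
The plan is to read off the exponential-affine transform formula for $\bar T_t^y$ from Theorem~\ref{thm:genRiccatiFilter} and then to recognize the associated Riccati equations as those of a time-inhomogeneous affine process. Concretely, the identity $\rho_t^x(f,y)=\bar T_t^y(\exp(\langle y_t,\cdot\rangle)f)(x)$ recorded before the statement gives, with the constant choices $\gamma^\top=((x_0)_I^\top,0)$ and $c\equiv c_0$ in force,
\[
  \bar T_t^y(\exp(\langle u,\cdot\rangle))(x)=\rho_t^x(\exp(\langle u-y_t,\cdot\rangle),y)=\exp\bigl(\Phi(0,t,u-y_t)+\langle x,\Psi(0,t,u-y_t)\rangle\bigr)
\]
for $x\in D$ and $t\le T$ with $T$ small, where $\Phi,\Psi$ are as in Theorem~\ref{thm:genRiccatiFilter}; for $u$ imaginary this is legitimate since $y_0=0$, so that for $t$ small the terminal data $u-y_t$ lies near $i\R^d$ and the local solvability of \eqref{eq:genRiccatiFilter} established in Section~\ref{sec:proofsAff} applies. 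Setting $\Psi^y(s,t,u):=\Psi(s,t,u-y_t)$ and $\Phi^y(s,t,u):=\Phi(s,t,u-y_t)$ one has $\Psi^y(t,t,u)=u$ and $\Phi^y(t,t,u)=0$, so it remains to show that $\Phi^y,\Psi^y$ solve the generalized Riccati equations \eqref{eq:genRiccatiInhomogeneous} with the vector fields \eqref{eq:FandRInhomogeneous} of the claimed parameters.

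The first step is purely algebraic: expand $F(w-y_s)$ and $R_i(w-y_s)$ using the L\'evy--Khintchine forms \eqref{eq:FandR} and regroup by the order in $w$. Using \eqref{eq:expMoments} for finiteness of the integrals that appear, this is precisely the computation turning the differential characteristics of $X$ into their Esscher transform by $e^{\langle -y_s,\cdot\rangle}$, and it yields $F(w-y_s)-c_0=\tilde F(s,w)$ and $R_i(w-y_s)-\gamma_i=\tilde R_i(s,w)$, where $\tilde F(s,\cdot),\tilde R(s,\cdot)$ are the vector fields \eqref{eq:FandRInhomogeneous} associated with the parameters \eqref{eq:parametersFilter} (the tilt of the original parameters by $e^{\langle g(s),\cdot\rangle}$ with $g(s)=-y_{s\wedge T}$), together with $c(s)=c_0-F(-y_s)$ and $\gamma^i(s)=x_0^i-R_i(-y_s)$. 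It is here that the hypothesis $\beta^i=0$ for $i\in J$ enters: it forces $R_i(w-y_s)=0$ for $i\in J$, so that $\tilde R_i(s,\cdot)$ has the purely linear form without constant term that \eqref{eq:FandRInhomogeneous} prescribes for $i\in J$. Substituting these identities into \eqref{eq:genRiccatiFilter} shows that $\Phi^y,\Psi^y$ solve \eqref{eq:genRiccatiInhomogeneous} for these parameters.

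The main obstacle is the remaining step: checking that the tilted parameters are admissible in the time-inhomogeneous sense for a suitable $c_0>0$ and $T>0$. Conditions \eqref{eq:admiss1}, \eqref{eq:alphacond}, \eqref{eq:driftcond3}, \eqref{eq:admissLevyMes1}, \eqref{eq:admissLevyMes2} are inherited from the original parameters (the diffusion part is unchanged, and for $i\in J$ the tilted $\beta^i(\cdot)$ vanishes since $\beta^i=0$). The drift conditions \eqref{eq:driftcond1}, \eqref{eq:driftcond2} are also preserved: positive semidefiniteness of $a$ together with $a_{kl}=0$ for $k,l\in I$ forces the $I$-indexed rows of $a$ to vanish, and likewise \eqref{eq:alphacond} together with positive semidefiniteness of $\alpha^j$ forces the $i$-th row of $\alpha^j$ to vanish for $i\in I\setminus\{j\}$; consequently the tilt by $e^{\langle\xi,g(s)\rangle}$ leaves the left-hand sides of \eqref{eq:driftcond1}, \eqref{eq:driftcond2} unchanged. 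Finally, for \eqref{eq:ccond} and \eqref{eq:gamcond} one uses $y_0=0$ and continuity of $y$: since $R_i(-y_t)\to0$ and $F(-y_t)\to0$ as $t\to0$, shrinking $T$ makes $\gamma^i(t)=x_0^i-R_i(-y_t)\ge0$ (recall $x_0^i>0$) on $[0,T]$, and then any $c_0>\sup_{t\in[0,T]}F(-y_t)$ gives $c(t)\ge0$ on $[0,T]$. Freezing $g$, hence all parameters, at their time-$T$ value for $t>T$ extends admissibility to all $t\ge0$, the required continuity and weak-continuity of the parameters following from continuity of $y$, analyticity of $F,R$ and dominated convergence (again using \eqref{eq:expMoments}).

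Once admissibility is established, \cite{Filipovic2005} (as recalled around \eqref{eq:affineDefInhomogeneous}) yields a time-inhomogeneous affine process whose transition semigroup $(P_{s,t}^y)$ satisfies \eqref{eq:affineDefInhomogeneous} with $\Phi,\Psi$ the unique solutions of \eqref{eq:genRiccatiInhomogeneous}; by uniqueness these coincide with $\Phi^y,\Psi^y$, so $P_{0,t}^y(\exp(\langle u,\cdot\rangle))(x)=\bar T_t^y(\exp(\langle u,\cdot\rangle))(x)$ for all $u\in\U$ and $x\in D$. For each $x$ both $\bar T_t^y(\cdot)(x)$ and $P_{0,t}^y(\cdot)(x)$ are finite measures on $D$ (finiteness of the former is the transform formula at $u=0$) agreeing on the measure-determining family $\{\exp(\langle u,\cdot\rangle):u\in i\R^d\}$; hence they are equal, i.e.\ $\bar T_t^y=P_{0,t}^y$ on $B(D)$ for every $t\in[0,T]$, which is the assertion.
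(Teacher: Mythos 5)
Your route is genuinely different from the paper's. The paper never touches the Riccati equations here: it identifies the extended generator $\A^y$ of $\bar T^y$ via It\^o's formula (Proposition~\ref{prop:extendedGenerator}), takes the dense set $L$ of Lemma~\ref{lem:spacetimeharmonic} on which $P_{s,t}^y h$ is a bounded classical solution of $\partial_t u+\A_t^y u=0$, and shows the resulting local martingale $N_s=U_{s\wedge t}f(s\wedge t,X_{s\wedge t})$ is a true martingale because $U=e^V E$ with $E$ the martingale of Proposition~\ref{lem:truemartingale} and $V\le 0$ --- this last inequality being exactly where $\beta^i=0$ for $i\in J$, $x_I\ge 0$ and $c,\gamma\ge 0$ enter. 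Your proof instead matches the tilted vector fields $F(\cdot-y_s)-c_0$, $R(\cdot-y_s)-\gamma$ against \eqref{eq:FandRInhomogeneous} for the parameters \eqref{eq:parametersFilter} and identifies the two semigroups through their Laplace transforms; the hypothesis $\beta^i=0$ for $i\in J$ enters in the equivalent guise of killing the constant term in $R_i(\,\cdot-y_s)$ for $i\in J$. Your algebraic matching is correct, the admissibility discussion parallels Lemma~\ref{lem:admissible} plus the paper's choice of $c_0$ and $T$, and the final Fourier-uniqueness step is sound in principle. Your approach has the merit of exhibiting the Riccati equations of $P^y$ explicitly; the paper's has the merit of working directly on $C_0(D)$ without needing the transform formula at complex arguments off $i\R^d$.

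That last point is where your argument has a genuine gap. You invoke Theorem~\ref{thm:genRiccatiFilter} at the argument $u-y_t$ (and at $-y_t$ for the $u=0$ normalization), but the existence statement of that theorem covers only $u\in i\R^d$, i.e.\ terminal data $u+y_T$ whose associated forward problem \eqref{eq:forwardRicatti} starts from a purely imaginary point. Your shifted terminal data correspond to initial data in $-y_t+i\R^d$, which have nonzero real part. The formula \eqref{eq:FourierCoeff} is indeed conditional only on existence of a $C^1$ solution, but existence for these shifted data is not ``local solvability'' in the per-$u$ sense: you need a single $T$ working uniformly over the whole affine slice $-y_t+i\R^d$, and that is exactly the content of the non-explosion argument (steps (ii)--(iii) in the paper's proof of Theorem~\ref{thm:genRiccatiFilter}), which compares $\Re\tilde\Psi_I(\cdot,u)$ against the \emph{real} solution started at $\Re u$. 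For your data this reference solution is the one started at $-y_t$, not at $0$, so step (ii) must be re-proved for a family of real starting points $\{-y_t:t\le T\}$ (feasible, by openness of the domain of the flow and $y_0=0$, but not a citation), and the comparison \eqref{eq:comparison} must be run against it. Until this is done, neither the value $\bar T_t^y(\exp(\langle u,\cdot\rangle))(x)$ for $u\in i\R^d$ nor the finiteness of $\bar T_t^y 1(x)$ --- both of which your measure-identification step relies on --- is actually established. (A cosmetic remark: the statement fixes $c(t)=c_0-F(-y_t)$ and $\gamma^i(t)=x_0^i-R_i(-y_t)$ for all $t\ge 0$ without freezing at $T$; your freezing of all parameters at their time-$T$ values is a harmless and arguably cleaner repair of the same issue the paper glosses over.)
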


\subsubsection{Discussion}

\begin{remark} The ordinary differential equation
  \eqref{eq:genRiccatiFilter} is formulated backwards in time, which
  appears to lead to a non-recursive filter. This can easily be
  resolved and we now explain how a recursive procedure can be
  obtained: Fix $T_0 > 0$ sufficiently
  small. Theorem~\ref{thm:genRiccatiFilter} guarantees that for any
  $v \in \R^d$ and $T \leq T_0$ there exists a unique $u_0 \in \C^d$
  such that the ODE
  \begin{equation}
    \label{eq:genRiccatiFilterRecursive} \begin{aligned}
      -\partial_t \bar{\Psi}(t,u_0) & = R(\bar{\Psi}(t,u_0)-y_t)-\gamma_t  \\   \bar{\Psi}(0,u_0) & = u_0 \end{aligned}
  \end{equation}
  has a unique solution on $[0,T]$ with
  $\bar{\Psi}(T,u_0)=iv+y_T$. More specifically, one chooses
  $u_0:=\Psi(0,T,iv)$ and $\bar{\Psi}(t,u_0):=\Psi(t,T,iv)$. This
  gives the following recursive procedure to calculate the approximate
  filter at time $T \leq T_0$:
  \begin{itemize}
  \item solve for all $u_0 \in \C^d$ (for which a solution exists) the
    ODE \eqref{eq:genRiccatiFilterRecursive} up to time $T$.
  \item for $v \in \R^d$, find the unique solution $u_0 \in \C^d$ to
    $\bar{\Psi}(T,u_0) = iv+y_T $ and evaluate
    \[ \rho^x_T(\exp(\langle iv , \cdot \rangle),y) = \exp \left(
        \int_0^T F(\bar{\Psi}(s,u_0)-y_s)-c_s \dd s+\langle x, u_0
        \rangle \right). \]
  \end{itemize}
  In order to calculate the approximate filter at time
  $\tilde{T} \in [T,T_0]$ one only needs to continue solving
  \eqref{eq:genRiccatiFilterRecursive} on $[T,\tilde{T}]$ (and then
  repeat the second step for $v \in \R^d$), hence the procedure is
  indeed recursive.
\end{remark}

\begin{remark}\label{rmk:obsNoiseScale} Consider a $p$-dimensional
  Brownian motion $W$, $C \in \R^{p \times d}$,
  $\Gamma \in \R^{p \times p}$ invertible and an observation process
  given as
  \begin{equation}\label{eq:YBar} \bar{Y}_t = \int_0^t C X_s \dd s +
    \Gamma W_t, \quad t \geq 0.
  \end{equation}
  The present methodology also provides an approximation for this
  setup: Since $\bar{Y}$ and $Y =\Gamma^{-1} \bar{Y}$ generate the
  same filtration, the filtering distribution is given by
  \eqref{eq:pathwiseFiltering} and \eqref{eq:pathwiseFiltering2} with
  $h(x)=\Gamma^{-1} C x$. The pathwise functional $\sigma_t(f,y)$ in
  \eqref{eq:pathwiseFiltering} is approximated naturally by
  $\rho_t(f, (\Gamma^{-1} C)^\top y))$ (see
  \eqref{eq:modifiedZakaiFilter}) with
  $\gamma_s = (\Gamma^{-1} C)^\top \Gamma^{-1} C x_0$ (corresponding
  to the linearization of $h$ around $x_0$) and
  $x_0 = \int_D x \pi_0(\d x)$. We do not specify $c$ here, since it
  cancels out in the normalization \eqref{eq:approximateFilter}.

  In fact, this choice of $\gamma$ has been used in the examples in
  Section~\ref{sec:CIR}.
\end{remark}

\begin{remark}
  The choice of the functions $\gamma$ and $c$ is of course essential
  for how close $\rho_t$ and $\bar{\pi}_t$ are to $\sigma_t$ and
  $\pi_t$.  In the examples we have always made the choice specified
  in the previous remark. Let us examine the approximation quality in
  this setting. Using the product rule \eqref{eq:productRule}, the
  definition of $\Q$ and applying the change of measure
  \eqref{eq:changeOfMeasure} in \eqref{eq:modifiedZakaiFilter} yields
  $\P$-a.s.
  \[\begin{aligned} \rho_t(f,Y)&= \E_\Q\left[\left. f(X_t)\frac{\d \P}{\d \Q}\exp \left(\frac{1}{2} \int_0^t |h(X_s)|^2 \dd s - \int_0^t \gamma_s^\top X_s - c_s \dd s  \right) \right|\mathcal{F}^Y_t\right]\\
      & = \E \left[\left. f(X_t) \exp \left(\frac{1}{2} \int_0^t
            |h(X_s)|^2 \dd s - \int_0^t \gamma_s^\top X_s - c_s \dd s
          \right) \right|\mathcal{F}^Y_t\right] \sigma_t 1
    \end{aligned}\] and so (in the setting of the previous remark)
  \[ \bar{\pi}_t(f,Y) = \frac{\E \left[\left. f(X_t) A_t
        \right|\mathcal{F}^Y_t\right]}{\E \left[\left. A_t
        \right|\mathcal{F}^Y_t\right]} , \quad A_t = \exp
    \left(\frac{1}{2} \int_0^t |\Gamma^{-1} C(X_s-x_0)|^2 \dd s
    \right). \] This gives an indication about the approximation
  quality:
 
  If (with high probability) $\log A_t$ is very small, then the
  approximation quality is good. This happens for example if
  $\Gamma = \varepsilon I$ for large $\varepsilon > 0$. If
  $\varepsilon > 0$ is very small on the other hand, then the
  approximation quality decreases. However, in this regime there is no
  need for filtering, since $\int_0^\cdot X_s \dd s$ can be almost
  read off from \eqref{eq:YBar}. For intermediate values of
  $\varepsilon$ this is more difficult to judge and from numerical
  experiments it appears that there is a range of $\varepsilon$ for
  which the filtering problem is not easy, and nevertheless the
  approximation is not very good.
\end{remark}

\begin{remark} If the observations arrive only at discrete-time points
  (as opposed to the cont\-inuous-time setting considered here) a
  similar approximation can be defined. In this case the ordinary
  differential equations \eqref{eq:genRiccatiFilter} are replaced by
  difference equations.
\end{remark}

\section{Proofs}
\label{sec:proofsAff}

\subsection{Proof of auxiliary results}

In this section we prepare for the proof of the main results. To this
end, we study a change of measure, estimates for the function $R$ in
\eqref{eq:FandR} and properties of $\bar{T}^y$ in
\eqref{eq:modifiedZakaiFunctional}.

\subsubsection{Change of measure}

One of the key tools in the proofs is a change of measure, which turns
the original (time-homogeneous) affine process into a
time-inhomogeneous affine process. The next Lemma~\ref{lem:admissible}
verifies that the associated parameters satisfy the admissibility
conditions. Based on this, Proposition~\ref{lem:truemartingale} below
will then provide the ingredients for the change of measure.

\begin{lemma}\label{lem:admissible} Suppose $g \colon \R_+ \to \R^d$ is continuous, \eqref{eq:collectionOfParameters} are admissible with $c=0$, $\gamma = 0$ and \eqref{eq:expMoments} holds.
  For $t \geq 0$, define parameters \eqref{eq:parametersInhomogeneous}
  by $c(t)=0$, $\gamma(t)=0$ and
  \begin{equation} \label{eq:parametersFilter}
    \begin{aligned}
      a(t) & = a \\
      \alpha(t) & = \alpha \\
      b(t) & = b + a g_t + \int_{D \setminus \{0\}} \chi(\xi)(e^{\langle g_t,\xi\rangle}-1)\mu^0(\d \xi) \\
      \beta_{i,j}(t) & = \beta_{i,j} + (\alpha^j g_t)_i + \int_{D \setminus \{0\}} \chi_i(\xi)(e^{\langle g_t,\xi\rangle}-1)\mu^j(\d \xi) , \quad i \in I \cup J, \, j \in I   \\
      \beta_{i,j}(t) & = \beta_{i,j}, \quad i \in I \cup J, \, j \in J \\
      \mu^i(t,\d \xi) & = e^{\langle g_t,\xi\rangle} \mu^i(\d \xi),
      \quad i \in I \cup \{0\}.
    \end{aligned}
  \end{equation}
  Then \eqref{eq:parametersInhomogeneous} is strongly admissible and
  for all $T \geq 0$,
  \begin{equation}\label{eq:KallsenMuhleKarbeCond}
    \sup_{t \in [0,T]} \int_{\{\xi_k > 1\}} \xi_k \mu^i(t,\d \xi) < \infty,\quad \text{ for } \quad i,k \in I.
  \end{equation}
\end{lemma}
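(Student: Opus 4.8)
The plan is to verify the strong admissibility conditions for the modified parameters \eqref{eq:parametersFilter} one at a time, checking them against the list \eqref{eq:admiss1}--\eqref{eq:admissLevyMes2} plus the three continuity requirements, and then to establish the extra uniform integrability bound \eqref{eq:KallsenMuhleKarbeCond}. Most of the defining data are unchanged ($a(t)=a$, $\alpha(t)=\alpha$, $c(t)=0$, $\gamma(t)=0$, $\beta_{i,j}(t)=\beta_{i,j}$ for $j\in J$), so the semidefiniteness conditions \eqref{eq:admiss1}, \eqref{eq:alphacond}, \eqref{eq:ccond}, \eqref{eq:gamcond} and the structural vanishing \eqref{eq:driftcond3} are inherited verbatim from the admissibility of \eqref{eq:collectionOfParameters}. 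The Lévy measures $\mu^i(t,\d\xi)=e^{\langle g_t,\xi\rangle}\mu^i(\d\xi)$ are mutually absolutely continuous with the original ones with a strictly positive density, so they are still Borel measures on $D\setminus\{0\}$; the integrability conditions \eqref{eq:admissLevyMes1}, \eqref{eq:admissLevyMes2} require checking that $\int \chi_k(\xi) e^{\langle g_t,\xi\rangle}\mu^i(\d\xi)$ and $\int \chi_k(\xi)^2 e^{\langle g_t,\xi\rangle}\mu^i(\d\xi)$ are finite, which follows by splitting the integral into $\{|\xi|\le 1\}$ (where $\chi_k(\xi)$ or $\chi_k(\xi)^2$ behaves like $|\xi_k|$ resp.\ $\xi_k^2$, the exponential is bounded, and we use the original conditions \eqref{eq:admissLevyMes1}, \eqref{eq:admissLevyMes2}) and $\{|\xi|>1\}$ (where $\chi_k,\chi_k^2\le 1$ and we invoke the exponential-moment hypothesis \eqref{eq:expMoments}).

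The genuinely substantive conditions are the drift inequalities \eqref{eq:driftcond1} and \eqref{eq:driftcond2}, since they involve a cancellation between the new drift coefficient and the new Lévy-compensator term. For \eqref{eq:driftcond1} with $i\in I$ we must show
\[
b_i(t) - \int_{D\setminus\{0\}} \chi_i(\xi)\,\mu^0(t,\d\xi) \ge 0 .
\]
Substituting the definitions, $b_i(t) = b_i + (a g_t)_i + \int \chi_i(\xi)(e^{\langle g_t,\xi\rangle}-1)\mu^0(\d\xi)$ and $\mu^0(t,\d\xi)=e^{\langle g_t,\xi\rangle}\mu^0(\d\xi)$, so the $e^{\langle g_t,\xi\rangle}$-weighted terms cancel and the expression collapses to
\[
b_i - \int_{D\setminus\{0\}} \chi_i(\xi)\,\mu^0(\d\xi) + (a g_t)_i .
\]
The first two terms are $\ge 0$ by the original condition \eqref{eq:driftcond1}; for the last term I would use that $a\in\mathrm{Sem}^d$ with $a_{ij}=0$ for $i,j\in I$ (condition \eqref{eq:admiss1}), which forces the $i$-th row of $a$ to vanish for $i\in I$ — indeed positive semidefiniteness of a symmetric matrix together with a zero diagonal entry $a_{ii}=0$ implies the entire $i$-th row and column are zero. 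Hence $(a g_t)_i = 0$ for $i\in I$ and the inequality holds. The same mechanism handles \eqref{eq:driftcond2}: for $i,j\in I$ with $i\neq j$, the $\beta_{i,j}(t)$-plus-compensator expression reduces to $\beta_{i,j} - \int \chi_i(\xi)\mu^j(\d\xi) + (\alpha^j g_t)_i$, the first two terms are $\ge 0$ by \eqref{eq:driftcond2}, and condition \eqref{eq:alphacond} (namely $\alpha^j\in\mathrm{Sem}^d$ with $\alpha^j_{k,l}=0$ for $k,l\in I\setminus\{j\}$) together with the same positive-semidefinite-with-zero-diagonal argument gives $(\alpha^j g_t)_i=0$ when $i\in I\setminus\{j\}$. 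This is the step I expect to be the main obstacle, purely because it requires being careful about exactly which entries of $a$ and $\alpha^j$ vanish and invoking the linear-algebra fact about semidefinite matrices; everything else is bookkeeping.

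For the continuity requirements: $a(t), \alpha(t), c(t), \gamma(t)$ are constant hence continuous; $b(t)$ and $\beta(t)$ are continuous in $t$ because $g$ is continuous and the integral terms $\int\chi(\xi)(e^{\langle g_t,\xi\rangle}-1)\mu^i(\d\xi)$ depend continuously on $g_t$ — this I would justify by dominated convergence, dominating by $\chi(\xi)(e^{\langle g',\xi\rangle}+1)$ for $g'$ in a compact neighborhood and using \eqref{eq:expMoments} to see this dominating function is $\mu^i$-integrable (again splitting near and far from the origin). The weak continuity in $t$ of $\chi_k(\cdot)\mu^i(t,\cdot)$ and $\chi_k(\cdot)^2\mu^i(t,\cdot)$ follows from $g\mapsto g_t$ continuous and $\int f(\xi)\chi_k(\xi)e^{\langle g_t,\xi\rangle}\mu^i(\d\xi) \to \int f(\xi)\chi_k(\xi)e^{\langle g_{t_0},\xi\rangle}\mu^i(\d\xi)$ for bounded continuous $f$, by the same dominated-convergence argument. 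Finally, the bound \eqref{eq:KallsenMuhleKarbeCond} reads $\sup_{t\in[0,T]}\int_{\{\xi_k>1\}}\xi_k e^{\langle g_t,\xi\rangle}\mu^i(\d\xi)<\infty$ for $i,k\in I$; since $g$ is continuous, $\{g_t : t\in[0,T]\}$ is compact, so there is a fixed $u^*\in\R^d$ with $\langle g_t,\xi\rangle \le \langle u^*,\xi\rangle$ on $\{\xi_k>1\}\subset D$ for all $t\in[0,T]$ (taking $u^*_j = \max_{t\in[0,T]} (g_t)_j$ coordinatewise, using $\xi\ge 0$ coordinatewise in $D=\R_+^m\times\R^{d-m}$ — wait, only the first $m$ coordinates are nonnegative, so one must be slightly more careful, but since the bound is needed only for $i,k\in I$ and over a compact range of $g$, one reduces to $\xi_k e^{\langle g_t,\xi\rangle}\le C|\xi| e^{\langle u^*,\xi\rangle}$ on $\{\xi_k>1\}$), and then $\int_{\{|\xi|>1\}}|\xi| e^{\langle u^*,\xi\rangle}\mu^i(\d\xi)<\infty$ is exactly \eqref{eq:expMoments}. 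This completes the verification.
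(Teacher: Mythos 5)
Your proposal is correct and follows essentially the same route as the paper's proof: the cancellation of the exponentially reweighted compensator terms in \eqref{eq:driftcond1}--\eqref{eq:driftcond2}, the observation that positive semidefiniteness plus a vanishing diagonal entry kills the whole row of $a$ and $\alpha^j$ (so $(ag_t)_i=(\alpha^jg_t)_i=0$ for the relevant $i\in I$), the near/far splitting of the L\'evy integrals against \eqref{eq:expMoments}, and compactness of $\{g_t:t\in[0,T]\}$ for \eqref{eq:KallsenMuhleKarbeCond}. The one wrinkle you yourself flag is real: a single coordinatewise maximum $u^*$ does not dominate $e^{\langle g_t,\xi\rangle}$ uniformly because $\xi_J$ may be negative, but this is repaired in one line either by bounding $\sup_{t}e^{\langle g_t,\xi\rangle}\le\sum_{v}e^{\langle v,\xi\rangle}$ over the vertices $v$ of a box containing $\{g_t\}$ (each term integrable by \eqref{eq:expMoments}), or, as the paper does, by noting that $u\mapsto\int_{\{|\xi|>1\}}|\xi|e^{\langle u,\xi\rangle}\mu^i(\d\xi)$ is analytic, hence bounded on the compact set $\{g_t:t\in[0,T]\}$ --- the same device replaces your slightly under-specified dominating function in the continuity arguments.
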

\begin{proof}
  \textbf{Admissibility for fixed $t \geq 0$:} Firstly,
  \eqref{eq:admiss1} implies $a_{i,j}=0$ for all
  $i \in I, j \in I \cup J$ (see (2.4) in \cite{duffie2003}). Thus,
  for $i \in I$, definition \eqref{eq:parametersFilter}, the assumed
  integrability \eqref{eq:admissLevyMes1} and the non-negativity
  condition \eqref{eq:driftcond1} yield
  \begin{equation*} b_i(t) - \int_{D \setminus \{0\}} \chi_i(\xi)
    \mu^0(t,\d \xi)= b_i - \int_{D \setminus \{0\}} \chi_i(\xi)
    \mu^0(\d \xi) \geq 0.\end{equation*} Similarly, for $i,j \in I$
  with $i \neq j$, \eqref{eq:alphacond} implies $\alpha^j_{i,k}=0$ for
  all $k \in I \cup J$. If this was not the case, i.e. if
  $\alpha^j_{i,k}\neq 0$ for some $k \in J \cup \{j\}$, then defining
  $v \in \R^d$ by $v_l = \delta_{l k}$ for $l \in J \cup \{j\}$,
  $v_l = C \delta_{l i}$ for $l \in I \setminus \{j\}$ and using
  \eqref{eq:alphacond} would yield
  \[ 0 \leq v^\top \alpha^j v = 2 C \alpha^j_{i,k} + \alpha^j_{k,k} \]
  for all $C \in \R$ and hence a contradiction. Consequently
  $(\alpha^j g_t)_i = 0$ and as above one uses \eqref{eq:driftcond2}
  and \eqref{eq:admissLevyMes1} to obtain
  \begin{equation*}
    \beta_{i,j}(t)-  \int_{D \setminus \{0\}} \chi_i(\xi) \mu^j(t,\d \xi) = \beta_{i,j} - \int_{D \setminus \{0\}} \chi_i(\xi) \mu^j(\d \xi) \geq  0.\end{equation*}
  Finally, for $i \in I \cup \{0\}$ and any non-negative $f \in B(D)$  one uses $|e^{\langle g_t,\xi\rangle}|\leq e^{|g_t|}$ on $\{|\xi| \leq 1\}$ to estimate
  \begin{equation}\label{eq:auxEqAff14} \int_{D \setminus \{0\} }
    f(\xi) \mu^i(t,\d \xi) \leq e^{|g_t|}\int_{\{|\xi| \leq 1\}
      \setminus \{0\}} f(\xi) \mu^i(\d \xi) + \|f\|_\infty \int_{D
      \setminus \{|\xi| \leq 1\}} |\xi| e^{\langle g_t,\xi\rangle}
    \mu^i(\d \xi). \end{equation}
  Inserting $f=\chi_k$ for $k \in I \setminus \{i\}$ and $f=\chi_k^2$ for $k \in (J \cup \{i\})\setminus\{0\}$ in \eqref{eq:auxEqAff14}, the integrability conditions for $\mu^i(t,\cdot)$ follow from \eqref{eq:admissLevyMes1}, \eqref{eq:admissLevyMes2} and \eqref{eq:expMoments}.

  Altogether, it has been verified that
  \eqref{eq:parametersInhomogeneous} satisfy for each $t \geq 0$
  conditions \eqref{eq:admiss1}-\eqref{eq:admissLevyMes2}.

  \textbf{Continuity in $t$:} Let us first verify the third and fourth
  admissibility conditions. To do so, note that for any
  $f \colon D\setminus \{0\} \to \R$ which is $\mu^i$-integrable,
  dominated convergence and continuity of $g$ yield that
  \begin{equation}\label{eq:auxEqAff2} t \mapsto \int_{\{|\xi| \leq
      1\} \setminus \{0\}} f(\xi) e^{\langle g_t,\xi\rangle} \mu^i(\d
    \xi) \quad \text{is continuous.} \end{equation}
  Suppose the following is established: For any $f \in C_b(D)$, 
  \begin{equation}\label{eq:auxEqAff3} t \mapsto \int_{D \setminus
      \{|\xi| \leq 1\}} f(\xi) e^{\langle g_t,\xi\rangle} \mu^i(\d
    \xi) \quad \text{is continuous.}  \end{equation}
  Then for $k \in I \setminus \{i\}$ and any $h \in C_b(D)$, one defines $f:=\chi_k h$, notes that $f \in C_b(D)$ (since $\chi \in C_b(D)$) and $\mu^i$-integrable by \eqref{eq:admissLevyMes1} and concludes that
  \[ t \mapsto \int_{D \setminus \{0\} } h(\xi) \chi_k(\xi) \mu^i(t,\d
    \xi) \quad \text{is continuous},\] by \eqref{eq:auxEqAff2} and
  \eqref{eq:auxEqAff3}. Thus $\chi_k(\cdot) \mu^i(t,\cdot)$ is weakly
  continuous and the last strong admissibility condition follows
  analogously with $f:=\chi_k^2 h$ and \eqref{eq:admissLevyMes2}.

  To verify \eqref{eq:auxEqAff3}, note that \eqref{eq:expMoments} and
  \cite[Lemma~A.2]{duffie2003} yield that the function
  $G_0\colon \R^d \to \R$ defined via
  \begin{equation}\label{eq:auxEqAff17} G_0(u):= \int_{D \setminus
      \{|\xi| \leq 1\}} f(\xi) e^{\langle u,\xi\rangle} \mu^i(\d
    \xi) \end{equation}
  is analytic. In particular, composing it with the continuous function $y$ preserves continuity and hence \eqref{eq:auxEqAff3} holds.

  Finally, it remains to argue that $b(\cdot)$ and $\beta(\cdot)$ are
  continuous. To show this, for any $i \in I \cup \{0\}$,
  $k \in I \cup J$ one uses
  $\mu^i(D \setminus \{|\xi| \leq 1\}) < \infty$ (since $\chi$ is
  bounded away from $0$ on $D \setminus \{|\xi| \leq 1\}$ and by
  \eqref{eq:admissLevyMes1} and \eqref{eq:admissLevyMes2}) to
  decompose
  \begin{equation}\label{eq:auxEqAff15}\begin{aligned} \int_{D
        \setminus \{0\}} \chi_k(\xi)(e^{\langle
        g_t,\xi\rangle}-1)\mu^i(\d \xi) & = \int_{\{|\xi| \leq 1\}
        \setminus \{0\}} \chi_k(\xi)(e^{\langle g_t,\xi\rangle}-1)
      \mu^i(\d \xi) \\ & \quad + \int_{D \setminus \{|\xi| \leq 1\}}
      \chi_k(\xi) e^{\langle g_t,\xi\rangle} \mu^i(\d \xi) - \int_{D
        \setminus \{|\xi| \leq 1\}} \chi_k(\xi)\mu^i(\d
      \xi). \end{aligned} \end{equation} The second term is continuous
  in $t$ by \eqref{eq:auxEqAff3} and so it remains to show that the
  first integral is continuous in $t$. But this follows from dominated
  convergence: for any $T > 0$ one may use Lipschitz continuity of
  $\exp$, continuity of $g$, the Cauchy-Schwarz inequality and the
  properties of $\chi$ to find $C_0,C_1,C_2>0$ such that for all
  $t \in [0,T], \xi \in \{|\xi| \leq 1\} \setminus \{0\}$,
  \[ |\chi_k(\xi)(e^{\langle g_t,\xi\rangle}-1)| \leq C_0
    |\chi_k(\xi)| |\langle g_t,\xi\rangle|\leq C_1|\chi(\xi)|^2
    \frac{|\xi|}{|\chi(\xi)|} \leq C_2 |\chi(\xi)|^2. \] But
  \eqref{eq:admissLevyMes1} and \eqref{eq:admissLevyMes2} imply
  $\int_{\{|\xi| \leq 1\} \setminus \{0\}} |\chi(\xi)|^2 \mu^i(\d \xi)
  < \infty$ and thus the claim.

  \textbf{Verification of \eqref{eq:KallsenMuhleKarbeCond}:} Finally,
  again \eqref{eq:expMoments} and \cite[Lemma~A.2]{duffie2003} applied
  to the measure $|\xi|\mu^i(\d \xi)$ on $D \setminus \{|\xi|>1\}$
  (which is finite by \eqref{eq:expMoments}) shows that the function
  on $G\colon \R^d \to \R$ defined via
  \[ G(u) := \int_{D \setminus \{|\xi| \leq 1\}} |\xi| e^{\langle
      u,\xi\rangle} \mu^i(\d \xi) \] is analytic and thus for
  $i,k \in I$,
  \[\sup_{t \in [0,T]} \int_{\{\xi_k>1\}}\xi_k e^{g_t^\top \xi}
    \mu^i(\d \xi) \leq \sup_{t \in [0,T]} \int_{D \setminus
      \{|\xi|\leq 1\}} |\xi| e^{g_t^\top \xi} \mu^i(\d \xi) = \sup_{u
      \in K} G(u) < \infty,\] since $K:=\{g_s \, : \, s \in [0,T] \}$
  is a compact set by continuity of $g$.
\end{proof}

Based on Lemma~\ref{lem:admissible} and a result from
\cite{Kallsen2010} (alternatively, one could use \cite{cheridito2005})
we can now prove the following key tool:

\begin{proposition}\label{lem:truemartingale} Suppose
  $g \colon \R_+ \to \R^d$ is continuous and \eqref{eq:expMoments}
  holds. Then for any $x \in D$,
  \begin{itemize}
  \item[(i)] the process
    \begin{equation} \label{eq:Edef} E_t := \exp\left(\int_0^t
        g_u^\top \dd X_u -\int_0^t F(g_u) + \langle X_u , R(g_u)
        \rangle \dd u \right), \quad t \geq 0, \end{equation} is a
    $\P_x$-martingale,
  \item[(ii)] if for some $t \geq 0$ and all $s \geq 0$,
    $g(s)=g(s\wedge t)$, then $(E_{s \wedge t})_{s \geq 0}$ is the
    density process (w.r.t. $\P_x$) of a measure $\Q$ on
    $(\Omega,\mathcal{F})$ such that, under $\Q$, $X$ is a
    time-inhomogeneous affine process started from $(0,x)$ with
    parameters as in Lemma~\ref{lem:admissible}.
  \end{itemize}
\end{proposition}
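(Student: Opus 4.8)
The plan is to exhibit $E$ as the density process of an Esscher-type change of measure that transforms the affine process $X$ into the time-inhomogeneous affine process whose parameters were computed in Lemma~\ref{lem:admissible}, and to use the affine measure-change results of \cite{Kallsen2010} together with the existence and non-explosion theory of \cite{Filipovic2005} to promote the (easy) local martingale property of $E$ to a true one.

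First I would verify that $E$ is a nonnegative local $\P_x$-martingale. Writing $X$ in its canonical semimartingale decomposition relative to $\chi$ with characteristics \eqref{eq:characteristics}, putting $Z_t = \int_0^t g_u^\top \dd X_u - \int_0^t \bigl(F(g_u)+\langle X_u, R(g_u)\rangle\bigr)\dd u$ and applying It\^o's formula to $E=e^{Z}$, the predictable finite-variation part of $E$ equals $E_{-}$ multiplied by
\[ \tfrac{1}{2} g_s^\top \alpha(X_s) g_s + \langle\beta(X_s),g_s\rangle + \int_{D\setminus\{0\}}\bigl(e^{\langle\xi,g_s\rangle}-1-\langle\chi(\xi),g_s\rangle\bigr)K(X_s,\dd\xi) - F(g_s) - \langle X_s, R(g_s)\rangle. \]
By the affine form \eqref{eq:affineGeneratorShortCoeff} of $\alpha,\beta,K$ and the definitions \eqref{eq:FandR} of $F,R$, the first three terms sum to $F(g_s)+\langle X_s,R(g_s)\rangle$ --- this is just the identity $\A e^{\langle\cdot,u\rangle}=(F(u)+\langle\cdot,R(u)\rangle)e^{\langle\cdot,u\rangle}$ evaluated at the frozen argument $u=g_s$ --- so the drift vanishes identically. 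The stochastic integrals appearing are genuine local martingales because continuity of $g$ bounds $|g_s|$ on compact time intervals and then \eqref{eq:expMoments} controls the large-jump term $\int_{\{|\xi|>1\}}e^{\langle\xi,g_s\rangle}K(X_s,\dd\xi)$, while admissibility controls the small jumps and the continuous part. Being a nonnegative local martingale, $E$ is a $\P_x$-supermartingale; hence for (i) it is enough to show $\E_x[E_T]=1$ for each $T$.

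Next I would invoke the affine change-of-measure machinery. By Lemma~\ref{lem:admissible} the tilted parameters \eqref{eq:parametersFilter} (with $c(\cdot)\equiv\gamma(\cdot)\equiv0$) are strongly admissible and satisfy the uniform large-jump bound \eqref{eq:KallsenMuhleKarbeCond} on every $[0,T]$; by \cite{Filipovic2005} they therefore generate a conservative (non-exploding) time-inhomogeneous affine process on $D$ started from $(0,x)$. This non-explosion is exactly what prevents loss of mass and upgrades the supermartingale $E$ to a true $\P_x$-martingale with $\E_x[E_T]=1$; this is the content of the relevant theorem in \cite{Kallsen2010} (alternatively \cite{cheridito2005}), which in addition identifies the law of $X$ under the tilted measure as that time-inhomogeneous affine process. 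For (ii), when $g(s)=g(s\wedge t)$ the martingale $(E_{s\wedge t})_{s\ge0}$ is constant after $t$, hence closed by $E_t$ and uniformly integrable, so it is the density process (w.r.t.\ $\P_x$) of a probability measure $\Q$ on $(\Omega,\mathcal{F})$, and the cited result gives that under $\Q$ the process $X$ is precisely the time-inhomogeneous affine process with the parameters of Lemma~\ref{lem:admissible}. Finally (i) for an arbitrary continuous $g$ follows by localising in time: applying the eventually-constant case to $g(\cdot\wedge T)$, whose associated exponential coincides with $E$ on $[0,T]$, shows that $(E_s)_{s\in[0,T]}$ is a $\P_x$-martingale for every $T$, hence so is $(E_s)_{s\ge0}$.

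The main obstacle is the passage from local to true martingale: a nonnegative local martingale is in general only a supermartingale, and \eqref{eq:expMoments} alone does not suffice --- one genuinely needs the non-explosion of the tilted affine dynamics, which is why Lemma~\ref{lem:admissible} was set up to verify both strong admissibility and the uniform condition \eqref{eq:KallsenMuhleKarbeCond} and why the time-inhomogeneous theory of \cite{Filipovic2005} enters. A secondary point needing care is that $g$ is time-dependent rather than constant, so one must use the time-inhomogeneous version of the affine measure-change theorem (or reduce to piecewise-constant $g$ by approximation, using Lemma~\ref{lem:admissible} at each step).
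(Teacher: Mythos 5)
Your proposal is correct and follows essentially the same route as the paper: the paper likewise realizes $E$ as the stochastic exponential $\mathcal{E}(M)$ of a local martingale built from $X^c$ and the compensated jump measure (your direct It\^o computation of the vanishing drift is the same generator identity), and then combines Lemma~\ref{lem:admissible} with the affine measure-change theorem of \cite{Kallsen2010} to obtain simultaneously the true martingale property and the identification of the law of $X$ under $\Q$. The only material you elide is the verification of the remaining hypotheses of that theorem beyond strong admissibility and \eqref{eq:KallsenMuhleKarbeCond} --- namely integrability of $(1-\sqrt{W(t,\xi)})^2$ and of $|\chi(\xi)(W(t,\xi)-1)|$ against the L\'evy measures and weak continuity in $t$ of the associated measures, with $W(t,\xi)=e^{\langle g_t,\xi\rangle}$ --- which the paper checks at some length from \eqref{eq:expMoments} and the admissibility conditions.
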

\begin{proof}
  The proof of Proposition~\ref{lem:truemartingale} is structured as
  follows: In Step 1, $E$ in \eqref{eq:Edef} is rewritten as
  $\mathcal{E}(M)$ for a suitable local martingale $M$. In Step 2 it
  is verified that Lemma~\ref{lem:admissible} implies conditions
  \eqref{eq:KMKcond1}, \eqref{eq:KMKcond2} and \eqref{eq:KMKcond3}
  below. Finally, in Step 3 we combine Step 1 and 2 with
  \cite{Kallsen2010} and obtain (i) and (ii).

  \textbf{Step 1:} We follow the notation and definitions of
  \cite{Jacod2003}.

  Denote by $\mu^X$ the jump-measure and by $X^c$ the continuous
  martingale part of $X$, respectively. By \eqref{eq:expMoments},
  $|e^{g^\top x} -1-g^\top \chi(x)|*\nu$ is an adapted, continuous,
  increasing $\R$-valued process and thus (combining
  \cite[Lemma~I.3.10 and Proposition~II.1.28]{Jacod2003})
  $e^{g^\top x} -1+ g^\top \chi(x) \in G_{loc}(\mu^X)$. By linearity
  and \cite[Theorem~II.2.34]{Jacod2003},
  $g^\top \chi(x) \in G_{loc}(\mu^X)$ and so also
  $e^{g^\top x} -1 \in G_{loc}(\mu^X)$. Thus by
  \cite[Theorem~II.1.8(ii)]{Jacod2003}, the process
  \begin{equation}\label{eq:Mdef} M_t = \int_0^t g_s^\top \dd X^c_s +
    (e^{g^\top x} -1)* (\mu^X - \nu)_t, \quad t \geq 0 \end{equation}
  is a local martingale. By an argument as above and \cite[Corollary~II.2.38]{Jacod2003}, $g^\top x \in G_{loc}(\mu^X)$ and $W:= e^{g^\top x} - 1 - g^\top x  \in G_{loc}(\mu^X)$ and thus, using $\Delta M_t = e^{g_t^\top \Delta X_t} - 1 > -1$ one has
  \begin{equation}\label{eq:auxEqAff1}\begin{aligned}
      (\log(1+x)-x)*\mu^M & = (-g^\top x + e^{g^\top x} - 1)*\mu^X \\
      & = W*(\mu^X-\nu) + W*\nu \\ & \stackrel{\eqref{eq:Mdef}}{=}
      (-g^\top x) *(\mu^X-\nu) + M + \int_0^\cdot -g_s^\top \dd X^c_s
      + W*\nu \\ & = \int_0^\cdot -g_s^\top \dd X_s + M + W* \nu +
      \int_0^\cdot g_s^\top \beta(X_s) \dd s + g^\top (x-\chi(x))*\nu
      \\ & = - \int_0^\cdot g_s^\top \dd X_s + M + (e^{g^\top x} - 1 -
      g^\top \chi(x))*\nu + \int_0^\cdot g_s^\top \beta(X_s) \dd
      s. \end{aligned} \end{equation}

  Denoting by $\mathcal{E}$ the stochastic exponential, the definition
  (see also \cite[Theorem~8.10]{Jacod2003}) and
  \eqref{eq:characteristics} yields
  \begin{equation} \label{eq:auxEqAff5} \begin{aligned}
      \mathcal{E}(M)_t & = \exp\left(M_t-\frac{1}{2}\int_0^t g_s^\top
        \alpha(X_s) g_s \dd s - (\log(1+x)-x)*\mu^M_t \right)
      \\
      & \stackrel{\eqref{eq:auxEqAff1}}{=} \exp\bigg(\int_0^t g_u^\top \dd X_u - \int_0^t g_u^\top \beta(X_u)\dd u  -\frac{1}{2}\int_0^t g_s^\top \alpha(X_s) g_s \dd s \\ & \quad \quad + (g^\top \chi(x) -e^{g^\top x}+1)*\nu_t \bigg) \\
      & = \exp\left(\int_0^t g_u^\top \dd X_u -\int_0^t F(g_u) +
        \langle X_u , R(g_u) \rangle \dd u \right),
    \end{aligned}\end{equation}
  where the last step follows by definition \eqref{eq:FandR}.

  \textbf{Step 2:} Define $W\colon \R_+ \times \R^d \to [0,\infty)$ by
  $W(t,x):=e^{\langle g_t ,x \rangle}$. We now show that for all
  $j \in I \cup \{0\} $, $t \geq 0$,
  \begin{align} \label{eq:KMKcond1} \int_0^t \int_{D\setminus \{0\}}
    (1-\sqrt{W(s,x)})^2 \mu^j(\d x)\d s < \infty, \, &
    \\ \label{eq:KMKcond2} \int_{D\setminus
      \{0\}}|\chi(x)(W(t,x)-1)|\mu^j(\d x) < \infty, \, &
    \\ \label{eq:KMKcond3} \text{the measure }
    \chi_k(W(t,x)-1)(W(t,x)-1)\mu^j(\d x) & \text{ is weakly
      continuous in } t \in \R_+.
  \end{align}

  It remains to argue that \eqref{eq:KMKcond1}-\eqref{eq:KMKcond3} are
  indeed satisfied. Since $\exp$ is Lipschitz continuous and $g$ is
  continuous, there exists $C\geq 0$ such that for all $s \in [0,t]$,
  $|x|\leq 1$,
  \[ |1-\sqrt{W(s,x)}|=|1-e^{-\frac{1}{2}\langle g_s ,x \rangle}|\leq
    C |\langle g_s ,x \rangle|\leq C|g_s||x|. \] Taking
  $K \subset \R^d$ compact with $g_s, \frac{1}{2} g_s \in K$ for all
  $s \in [0,t]$ and splitting the integral in $\{|x|\leq 1\}$ and
  $\{|x| \geq 1\}$, we obtain (for $G_0$ as in \eqref{eq:auxEqAff17}
  with $f=1$)
  \[\begin{aligned} \int_0^t \int_{D\setminus \{0\}} (1-e^{-\frac{1}{2}\langle g_s ,x \rangle})^2 \mu^j(\d x)\d s \leq C & \int_0^t |g_s|^2 \dd s \int_{|x|\leq 1}|x|^2 \mu^j(\d x) \\
      & + 2 \sup_{u \in K} G_0(u) + \int_{D\setminus \{|x|\leq 1\}}
      \mu^j(\d x), \end{aligned}\] which is finite by the
  integrability properties of the L\'evy-measures
  \eqref{eq:admissLevyMes1}, \eqref{eq:admissLevyMes2} and since $G_0$
  is continuous.  Thus \eqref{eq:KMKcond1} indeed holds and an
  analogous reasoning gives \eqref{eq:KMKcond2}.

  To establish \eqref{eq:KMKcond3}, denote
  $\tilde{\mu}(t,\d x):=\chi_k(W(t,x)-1)(W(t,x)-1)\mu^j(\d x)$ and
  again consider $D \setminus \{|\xi|\leq 1\}$ and
  $\{|\xi| \leq 1\} \setminus \{0\}$ separately, i.e. for
  $f \in C_b(D)$ write
  \begin{equation}\label{eq:auxEqAff16} \int_{D\setminus \{0\}} f(\xi)
    \tilde{\mu}(t,\d \xi) = \int_{D \setminus \{|\xi| \leq
      1\}}f(\xi)\tilde{\mu}(t,\d \xi) + \int_{ \{|\xi| \leq 1\}
      \setminus \{0\}}f(\xi)\tilde{\mu}(t,\d \xi). \end{equation}
  The second term is continuous in $t$ by dominated convergence and the same argument used to show that $b$ and $\beta$ are continuous. The first term in  \eqref{eq:auxEqAff16} is the composition of $F_0 \colon \R^d \to \R_+$ defined by
  \[F_0(u):= \int_{D \setminus \{|\xi| \leq
      1\}}f(\xi)\chi_k(e^{\langle u ,\xi \rangle}-1)(e^{\langle u ,\xi
      \rangle}-1)\mu^j(\d \xi) \] and $g$. To establish
  \eqref{eq:KMKcond3} it thus suffices to show that $F_0$ is
  continuous. To see this, assume $f \geq 0$ (for general $f$ apply
  the subsequent argument to the positive and negative parts of $f$
  separately), define $h \colon [-1,\infty) \to \R$ by
  $h(z):=z^2-z\chi_k(z)$ and write
  \[ F_0(u)=G_0(2u)-2 G_0(u) + G_0(0) - \int_{D \setminus \{|\xi| \leq
      1\}}f(\xi)h(e^{\langle u ,\xi \rangle}-1)\mu^j(\d \xi) \] with
  $G_0$ as in \eqref{eq:auxEqAff17}. For the truncation function
  $\chi$ chosen in \cite{Kallsen2010}, $h(z)=\max(0,z^2-z)$ for all
  $z \in [-1,\infty)$ and so $h$ is non-decreasing and convex. In
  particular for any $\xi \in D \setminus \{|\xi| \leq 1\}$, the
  function on $\R^d$ defined by
  $u \mapsto h(e^{\langle u ,\xi \rangle}-1)$ is convex and so
  \[ u \mapsto \int_{D \setminus \{|\xi| \leq 1\}}f(\xi)h(e^{\langle u
      ,\xi \rangle}-1)\mu^j(\d \xi) \] is a ($\R_+$-valued) convex
  function on $\R^d$. \cite[Corollary~10.1.1]{Rockafellar1970} implies
  that it is continuous and so the proof is complete.

  \textbf{Step 3:} Recall that \eqref{eq:collectionOfParameters} with
  $c=0$ and $\gamma =0$ is strongly admissible in the sense of
  \cite[Definition~2.4]{Kallsen2010} and by Lemma~\ref{lem:admissible}
  the same holds for \eqref{eq:parametersFilter}. Furthermore, recall
  the definition of $M$ in \eqref{eq:Mdef}.  Since
  $g \colon \R_+ \to \R^d$ and
  $W\colon \R_+ \times \R^d \to [0,\infty)$ (defined above) are
  continuous, satisfy (by Step 2) conditions \eqref{eq:KMKcond1},
  \eqref{eq:KMKcond2} and \eqref{eq:KMKcond3} and since
  \eqref{eq:KallsenMuhleKarbeCond} holds,
  \cite[Theorem~4.1]{Kallsen2010} and its proof show that
  $\mathcal{E}(M)$ is a martingale and that $\mathcal{E}(M)$ can be
  used as the density process of a probability measure $\Q$ that is
  locally absolutely continuous w.r.t. $\P_x$ and has the properties
  stated in (ii). But $E_t=\mathcal{E}(M)_t$ (by Step 1) and hence the
  claim.
\end{proof}

\subsubsection{Estimates for $R$}

\begin{lemma}\label{lem:auxLem1} There exists a function
  $g \in C(\R^d,\R_+)$ such that $g(x) = g((x_I^+,x_J))$ for all
  $x \in \R^d$ (with $x_I^+ = (x_1^+,\ldots,x_m^+)$) and for any
  $u \in \C^d$,
  \begin{equation}\label{eq:MKRineq} \Re \langle \bar u_I,
    R_I(u)\rangle \leq g(\Re u)(1+|u_J|^2)(1+|u_I|^2).\end{equation}
\end{lemma}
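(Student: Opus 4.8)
The plan is to expand $\Re\langle\bar u_I,R_I(u)\rangle=\sum_{i\in I}\Re(\bar u_iR_i(u))$, split each $R_i$ (see \eqref{eq:FandR}) into its diffusion, drift and jump parts, and estimate each piece. We shall use three structural facts. First, admissibility \eqref{eq:alphacond} gives $\alpha^i_{jj}=0$ for $j\in I\setminus\{i\}$, so $\alpha^i\in\mathrm{Sem}^d$ has its $j$-th row and column vanishing for every such $j$; hence $\alpha^i$ is supported on the block indexed by $\{i\}\cup J$ and $\langle u,\alpha^iu\rangle=\alpha^i_{ii}u_i^2+2u_i\langle\alpha^i_{iJ},u_J\rangle+\langle u_J,\alpha^i_{JJ}u_J\rangle$. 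Second, \eqref{eq:admissLevyMes1} and \eqref{eq:admissLevyMes2} give $\int_{\{0<|\xi|\le1\}}|\xi|^2\mu^i(\d\xi)<\infty$ (using $\xi_k^2\le\chi_k(\xi)$ on $\{|\xi|\le1\}$ for $k\in I\setminus\{i\}$, since $\xi\in D$), while \eqref{eq:expMoments} gives $\mu^i(\{|\xi|>1\})<\infty$, $\int_{\{|\xi|>1\}}|\xi|\,\mu^i(\d\xi)<\infty$, and via \cite[Lemma~A.2]{duffie2003} that $w\mapsto\int_{\{|\xi|>1\}}e^{\langle\xi,w\rangle}\mu^i(\d\xi)$ is finite and continuous on $\R^d$. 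Third, every $\xi$ in the support of $\mu^i$ lies in $D$, so $\xi_k\ge0$ for $k\in I$; hence $\langle\xi,\Re u\rangle\le\langle\xi,((\Re u)_I^+,(\Re u)_J)\rangle$, and for $|\xi|\le1$ this is bounded by $|(\Re u)_I^+|+|(\Re u)_J|$. The function $g$ will be obtained at the end as a finite continuous majorant over $i\in I$ of the constants and continuous functions of $((\Re u)_I^+,(\Re u)_J)$ produced below; any polynomial-in-$u$ term of total degree at most $4$ that carries a factor $u_J$, or is not pure in $u_I$, is absorbed into $(1+|u_I|^2)(1+|u_J|^2)$ up to a constant.

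For the diffusion part: in $\Re(\tfrac12\bar u_i\langle u,\alpha^iu\rangle)$ the only term that could be a genuine cubic in $u_I$ is $\tfrac12\alpha^i_{ii}|u_i|^2\Re u_i\le\tfrac12\alpha^i_{ii}|u_i|^2(\Re u_i)^+=\tfrac12\alpha^i_{ii}\big(((\Re u_i)^+)^3+(\Im u_i)^2(\Re u_i)^+\big)$, and this is $\le\tfrac12\alpha^i_{ii}\big(((\Re u_i)^+)^3+(\Re u_i)^+\big)(1+|u_I|^2)(1+|u_J|^2)$, a continuous function of $(\Re u_i)^+$ times the desired factor; the other two terms each carry a factor $u_J$ and are absorbed. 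The drift part $\Re(\bar u_i\langle\beta^i,u\rangle)$ has total degree $\le2$ and is handled directly.

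For the jump part, split $D\setminus\{0\}=\{|\xi|>1\}\cup\{0<|\xi|\le1\}$. On $\{|\xi|>1\}$ one has $|e^{\langle\xi,u\rangle}|=e^{\langle\xi,\Re u\rangle}\le e^{\langle\xi,((\Re u)_I^+,(\Re u)_J)\rangle}$, so the modulus of $\int_{\{|\xi|>1\}}(e^{\langle\xi,u\rangle}-1-\langle\chi(\xi),u\rangle)\mu^i(\d\xi)$ is at most $G_i((\Re u)_I^+,(\Re u)_J)+C_i(1+|u|)$ with $G_i$ continuous; multiplying by $|\bar u_i|\le|u_I|$ and using $|u_I|(1+|u|)\le2(1+|u_I|^2)(1+|u_J|^2)$ gives the required bound. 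On $\{0<|\xi|\le1\}$, where $\chi(\xi)=\xi$, the integrand is $\Re\big(\bar u_i(e^{\langle\xi,u\rangle}-1-\langle\xi,u\rangle)\big)$; writing $\langle\xi,u\rangle=a+ib$ with $a=\langle\xi,\Re u\rangle$ and $b=\langle\xi,\Im u\rangle$, it equals $(\Re u_i)(e^a\cos b-1-a)+(\Im u_i)(e^a\sin b-b)$, and $a\le|(\Re u)_I^+|+|(\Re u)_J|$ renders all $e^a$-factors bounded by a continuous function of $((\Re u)_I^+,(\Re u)_J)$. The task is to bound this expression from above (not in modulus) by $C((\Re u)_I^+,(\Re u)_J)\,|\xi|^2(1+|u_I|^2)(1+|u_J|^2)$ and then integrate using $\int_{\{0<|\xi|\le1\}}|\xi|^2\mu^i(\d\xi)<\infty$; here the second-order vanishing of $z\mapsto e^z-1-z$ at $z=0$ produces the factor $|\xi|^2$, positivity $\xi_k\ge0$ ($k\in I$) controls the exponential factors, and the potentially cubic-or-worse-in-$u_I$ contributions, which come from pieces of the form $(\Im u_i)(\sin\langle\xi,u\rangle-\langle\xi,u\rangle)$, carry the favourable sign because $x\mapsto\sin x-x$ has sign $-\mathrm{sgn}(x)$.

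The main obstacle is precisely this last estimate on $\{0<|\xi|\le1\}$: one must resist passing to absolute values (which would produce a term of order $|u_I|^3$, indeed $|u_I|^4$, that the right-hand side cannot absorb when $u_J=0$), and instead keep the real part intact, using the sign of $\sin x-x$ and the positivity $\xi_I\ge0$ so that the dangerous terms either cancel in the sum over $i\in I$ or point downward, while \eqref{eq:expMoments} simultaneously keeps the $e^{\langle\xi,u\rangle}$ contributions finite and dependent on $\Re u$ only through $((\Re u)_I^+,(\Re u)_J)$.
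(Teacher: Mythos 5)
Your decomposition into diffusion, drift and jump parts is sound, the treatment of the diffusion, drift and large-jump pieces goes through, and you have correctly located the crux: the small-jump integral cannot be estimated in modulus, since $|\bar u_i(e^{\langle\xi,u\rangle}-1-\langle\xi,u\rangle)|\le\tfrac12 e^{a^+}|u_i|\,|\xi|^2|u|^2$ is cubic in $|u_I|$. But the strategy you propose for overcoming this does not work, and this is precisely the part the paper does \emph{not} prove from scratch: its entire proof consists of quoting \cite[Lemma~5.5]{keller-ressel2015}, which supplies \eqref{eq:MKRineq} with the explicit $g$ of \eqref{eq:auxEqAff13}, plus the one-line observation that $\xi_I\ge 0$ allows one to replace $g(x)$ by $g((x_I^+,x_J))$, with continuity coming from \eqref{eq:expMoments}.

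Two things break in your outline. First, the intermediate pointwise bound you aim for, namely
\begin{equation*}
\Re\bigl(\bar u_i(e^{\langle\xi,u\rangle}-1-\langle\xi,u\rangle)\bigr)\le C\bigl((\Re u)_I^+,(\Re u)_J\bigr)\,|\xi|^2(1+|u_I|^2)(1+|u_J|^2)\quad\text{for all }|\xi|\le 1,
\end{equation*}
is false: take $d=m=2$, $\xi=(\epsilon,\epsilon)$ and $u=(i/\epsilon,\,-2i/\epsilon)$. Then $\langle\xi,u\rangle=-i$ and the left-hand side for $i=1$ equals $(1-\sin 1)/\epsilon\to\infty$ as $\epsilon\downarrow 0$, while the right-hand side equals $C\cdot 2\epsilon^2(1+5/\epsilon^2)\le 12C$. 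Second, the sign argument you invoke for the dangerous piece $(\Im u_i)(\sin b-b)$ (with $b=\langle\xi,\Im u\rangle$) only gives a favourable sign when $\mathrm{sgn}(\Im u_i)=\mathrm{sgn}(b)$; in the example above $\Im u_1>0$ but $b=-1<0$, and the fallback you mention --- cancellation ``in the sum over $i\in I$'' --- is not available, because the $i$-th term is integrated against its \emph{own} measure $\mu^i$ (take $\mu^2=0$ and the compensating negative $i=2$ contribution disappears). A correct argument must couple the $\xi$-integration with the size of $u$ rather than reduce to a $\xi$-pointwise estimate of the form you propose; the analogous difficulty also recurs, unaddressed, in the contribution $\Re u_i(e^a\cos b-1-a)$ when $(\Re u)_I$ is very negative, since $g$ is not permitted to grow in that direction. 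As it stands, the hardest estimate of the lemma is described but not established, and the described route cannot be completed.
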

\begin{proof}
  Inequality \eqref{eq:MKRineq} is derived in
  \cite[Lemma~5.5]{keller-ressel2015} with
  \begin{equation}\label{eq:auxEqAff13} g(x):= c_0 (1+x_I^+) + c_1
    e^{x^+} + \sum_{i=1}^m \int_{D \cap |\xi|\geq 1} e^{\langle \xi, x
      \rangle} \mu^i(\d \xi) + \int_{D \cap |\xi|\leq 1}
    \xi_i(e^{\xi_i x_i^+}-1)\mu^i(\d \xi) \end{equation}
  for some $c_0, c_1 > 0$. Since $\xi_k \geq 0$ for all $k \in I$, $e^{\langle \xi , x \rangle} \leq e^{\langle \xi , (x_I^+,x_J) \rangle}$ and so \eqref{eq:MKRineq} remains valid if instead of $g$ one uses $g((x_I^+,x_J))$. Continuity of this function follows by \eqref{eq:expMoments} and so the lemma is proved.
\end{proof}

\begin{lemma}\label{lem:auxLem2}
  Let $r > 0$ and
  $S_r:=\{ u \in \C^d \; : \; \forall \in I\; \Re u_i \leq r, |u_J|
  \leq r \}$.  Then there exists $C>0$ such that for all $u \in S_r$,
  \[ | R_I(u)| \leq C(1+|u|^2). \]
\end{lemma}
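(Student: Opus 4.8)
The plan is to bound each scalar component $R_i(u)$, $i\in I$, directly from the L\'evy–Khintchine representation in \eqref{eq:FandR} and then pass to $R_I=(R_i)_{i\in I}$ via $|R_I(u)|\le\sqrt m\max_{i\in I}|R_i(u)|$. Fix $r>0$ and $u\in S_r$. The polynomial part $\tfrac12\langle u,\alpha^i u\rangle+\langle\beta^i,u\rangle$ is bounded by $C(1+|u|^2)$ with $C$ depending only on $\|\alpha^i\|$ and $|\beta^i|$, so the whole point is to control the jump integral $J_i(u):=\int_{D\setminus\{0\}}(e^{\langle\xi,u\rangle}-1-\langle\chi(\xi),u\rangle)\,\mu^i(\d\xi)$, which I would split at $|\xi|=1$ and show is absolutely convergent along the way.

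A preliminary remark I would record first: since $\xi_k\ge 0$ for $k\in I$, $\Re u_k\le r$ for $k\in I$ and $|\Re u_J|\le|u_J|\le r$, there is $\kappa=\kappa(r)>0$ with $\Re\langle\xi,u\rangle\le\kappa|\xi|$ for all $\xi\in D$ and all $u\in S_r$. On $\{0<|\xi|\le 1\}$ the truncation function chosen in the paper satisfies $\chi(\xi)=\xi$, so I would use the elementary bound $|e^z-1-z|\le\tfrac12|z|^2 e^{\max(0,\Re z)}$ (which follows from $e^z-1-z=\int_0^1(1-s)z^2e^{sz}\,\d s$), together with $\Re\langle\xi,u\rangle\le\kappa$ there and Cauchy–Schwarz, to bound the integrand by $\tfrac12 e^{\kappa}|\xi|^2|u|^2$. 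This is integrable because $\int_{\{0<|\xi|\le 1\}}|\xi|^2\mu^i(\d\xi)<\infty$, which I would check component-wise from \eqref{eq:admissLevyMes1}–\eqref{eq:admissLevyMes2} (using $\chi_k(\xi)=\xi_k$ near $0$, and $\xi_k\le 1$ to pass from $\chi_k$ to $\chi_k^2$ in the one case where only \eqref{eq:admissLevyMes1} is available). This piece contributes $\le C|u|^2$.

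On $\{|\xi|>1\}$ I would simply estimate $|\chi(\xi)|\le\sqrt d$, so the integrand is $\le e^{\Re\langle\xi,u\rangle}+1+\sqrt d\,|u|\le e^{\kappa|\xi|}+1+\sqrt d\,|u|$. It then remains to see that $\mu^i(\{|\xi|>1\})<\infty$ (standard, and already noted in the proof of Proposition~\ref{lem:truemartingale}) and that $\int_{\{|\xi|>1\}}e^{\kappa|\xi|}\mu^i(\d\xi)<\infty$. For the latter I would use $\kappa|\xi|\le\kappa\sum_{k=1}^d|\xi_k|=\max_{\epsilon\in\{-1,1\}^d}\langle\kappa\epsilon,\xi\rangle$, hence $e^{\kappa|\xi|}\le\sum_{\epsilon\in\{-1,1\}^d}e^{\langle\kappa\epsilon,\xi\rangle}$ pointwise, and on $\{|\xi|>1\}$ one has $e^{\langle\kappa\epsilon,\xi\rangle}\le|\xi|e^{\langle\kappa\epsilon,\xi\rangle}$; each of the finitely many summands is then finite by assumption \eqref{eq:expMoments}. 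This piece contributes $\le C(1+|u|)$.

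Collecting the three contributions gives $|R_i(u)|\le C(1+|u|+|u|^2)\le C'(1+|u|^2)$ uniformly in $u\in S_r$, and then $|R_I(u)|\le\sqrt m\max_{i\in I}|R_i(u)|\le C(1+|u|^2)$. The only mildly delicate point is the exponential-moment estimate on $\{|\xi|>1\}$: assumption \eqref{eq:expMoments} provides integrability of $e^{\langle z,\xi\rangle}$ only for \emph{fixed} $z\in\R^d$, not of $e^{\kappa|\xi|}$ directly, so one must route through the finite sign-pattern decomposition above; the remainder is a routine second-order Taylor estimate combined with the admissibility integrability conditions.
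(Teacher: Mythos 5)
Your proof is correct and follows essentially the same route as the paper's: bound each $R_i$, $i\in I$, separately, absorb the quadratic-plus-linear part trivially, split the jump integral at $|\xi|=1$, use a second-order Taylor estimate together with the uniform bound $\Re\langle\xi,u\rangle\leq\kappa(r)$ on small jumps (exploiting $\xi_I\geq 0$ and the defining constraints of $S_r$) and the admissibility conditions \eqref{eq:admissLevyMes1}--\eqref{eq:admissLevyMes2}, and control the large-jump part by $e^{\Re\langle\xi,u\rangle}+1+\sqrt{d}\,|u|$. The only divergence is in establishing $\sup_{u\in S_r}\int_{\{|\xi|>1\}}e^{\Re\langle\xi,u\rangle}\mu^i(\d\xi)<\infty$: the paper invokes analyticity of $u\mapsto\int_{\{|\xi|>1\}}e^{\langle\xi,u\rangle}\mu^i(\d\xi)$ (via \eqref{eq:expMoments} and Lemma~A.2 of \cite{duffie2003}) plus monotonicity in the $\xi_I$-directions, whereas your sign-pattern decomposition $e^{\kappa|\xi|}\leq\sum_{\epsilon\in\{-1,1\}^d}e^{\langle\kappa\epsilon,\xi\rangle}$ reduces directly to finitely many instances of \eqref{eq:expMoments} --- a more elementary and equally valid argument.
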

\begin{proof}
  By the triangle inequality it suffices to find for each $i \in I$ a
  constant $C_i > 0$ such that $|R_i(u)|\leq C_i(1+|u|^2)$ for all
  $u \in S_r$. For $i \in I$,
  \[|R_i(u)|\leq
    \left(\frac{1}{2}|\alpha^i|+|\beta^i|\right)(1+|u|^2)+\int_{D
      \setminus \{0\}}\left|e^{\langle \xi, u \rangle } - 1 - \langle
      \chi(\xi), u \rangle \right| \mu^i(\d \xi) \] and so we only
  need to analyze the $\mu^i$-integral. Set
  $B:=\{z \in \C \; : \; \Re z \leq (d+1)r \}$, then for all
  $z \in B$,
  \[ |\exp(z)-1-z| \leq |z| \sup_{t \in (0,1)}|e^{t z}-1| = |z|
    \sup_{t \in (0,1)}|z \int_0^t e^{ s z} \dd s| \leq |z|^2
    e^{(d+1)r}.  \] Furthermore, for any $\xi \in D$ with
  $|\xi|\leq 1$ and $u \in S_r$,
  \[ \Re \langle \xi , u \rangle = \langle \xi , \Re u \rangle \leq
    \left(\sum_{i \in I} \xi_i + |\xi_J|\right) r \leq (d+1) r \]
  implies $\langle \xi, u \rangle \in B$. Combining these two
  observations with the Cauchy-Schwarz inequality and $\chi(\xi)=\xi$
  on $\{|\xi| \leq 1\}$ one obtains
  \[ \int_{D \cap \{|\xi| \leq 1\}} \left|e^{\langle \xi, u \rangle }
      - 1 - \langle \chi(\xi), u \rangle \right| \mu^i(\d \xi) \leq
    |u|^2 e^{(d+1)r} \int_{D \cap \{|\xi| \leq 1\}} |\xi|^2 \mu^i(\d
    \xi) =: |u|^2 C_i^{0}, \] where $C_i^0$ is finite because of
  \eqref{eq:admissLevyMes1} and \eqref{eq:admissLevyMes2}.

  By \eqref{eq:expMoments} and \cite[Lemma~A.2]{duffie2003}, the
  function
  \[ \tilde{R}_i(u):= \int_{D \setminus \{|\xi| \leq 1\}} e^{\langle
      \xi, u \rangle } \mu^i(\d \xi) , \quad u \in \C^d \] is
  analytic. In particular, $C_0:=\sup_{|u|\leq r} |\tilde{R}_i(u)|$ is
  finite.

  Since $\chi$ is bounded away from $0$ on
  $D \setminus \{|\xi| \leq 1\}$,
  $C:=\mu^i(D \setminus \{|\xi| \leq 1\})$ is finite (by
  \eqref{eq:admissLevyMes1} and \eqref{eq:admissLevyMes2}). Combining
  this with $|\chi(\xi)|\leq d$ one obtains for $u \in S_r$
  \[\begin{aligned} \int_{D \setminus \{|\xi| \leq 1\}}
      \left|e^{\langle \xi, u \rangle } - 1 - \langle \chi(\xi), u
        \rangle \right| \mu^i(\d \xi) & \leq \int_{D \setminus \{|\xi|
        \leq 1\}} e^{\langle \xi, \Re u \rangle } \mu^i(\d \xi) + C(1
      + d|u|) \\ & \leq C_0 + C(1 + |u|) ,
    \end{aligned}\] where we have used $\xi_I \in \R_+^m$ for the last
  estimate. Combining all the estimates yields the desired statement.
\end{proof}

\subsubsection{Properties of $\bar{T}^y$} To prepare the proof of
Proposition~\ref{thm:relationToDavis} we provide two additional
Lemmas. The first is an application of Itô's lemma and essentially
identifies the extended generator of $\bar{T}^y$ in
\eqref{eq:modifiedZakaiFunctional}. The second Lemma rephrases a
result from \cite{Filipovic2005}.

Recall that $H(x) = (x_0)_I^\top x_I + c_0$ and for
$f \in C_0^{1,2}(\R_+\times D)$, $t \geq 0$, set
\begin{equation}\label{eq:Aydef} \begin{aligned}
    \A^y_t f(t,x) & = \A f(t,x) + f(t,x)[F(-y_t) + \langle x, R(-y_t)
    \rangle - H(x)] \\ & - \langle \alpha(x)y_t, \nabla_x f(t,x)
    \rangle + \int_{D \setminus \{0\}} [f(t,x+\xi) -
    f(t,x)](e^{-\langle y_t,\xi\rangle}-1) K(x,\d \xi). \end{aligned}
\end{equation}

\begin{proposition}\label{prop:extendedGenerator}
  Suppose $y \colon \R_+ \to \R^d$ is continuous,
  \eqref{eq:collectionOfParameters} are admissible with $c=0$,
  $\gamma = 0$ and \eqref{eq:expMoments} holds.  Define
  \begin{equation}\label{eq:Udef} U_t := \exp\left(-\int_0^t y_u^\top
      \dd X_u - \int_0^t H(X_u) \dd u \right) \end{equation}
  and $\A^y$ as in \eqref{eq:Aydef}.
  Then for any $f \in C_0^{1,2}(\R_+ \times D)$, the process 
  \begin{equation}\label{eq:martingaleProblemEt} U_t f(t,X_t) -
    f(0,X_0) - \int_0^t U_u (\partial_u + \A^y_u ) f(u, X_u) \dd u ,
    \quad t \geq 0, \end{equation}
  is a local martingale.

\end{proposition}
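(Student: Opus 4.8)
The plan is to apply Itô's formula to $U_tf(t,X_t)$ and read off the drift. Throughout I fix $x\in D$ and work under $\P_x$, the general case following by conditioning on $X_0$. Recall from the bullet list following \eqref{eq:characteristics} that under $\P_x$ the process $X$ is a semimartingale with characteristics given by $B_t=\int_0^t\beta(X_s)\dd s$, $C_t=\int_0^t\alpha(X_s)\dd s$, $\nu(\dd s,\dd\xi)=K(X_s,\dd\xi)\dd s$, and write its canonical decomposition $X=X_0+\int_0^\cdot\beta(X_s)\dd s+X^c+\chi(\xi)*(\mu^X-\nu)+(\xi-\chi(\xi))*\mu^X$ in the notation of \cite{Jacod2003}. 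First I would write $U_t=\exp(L_t)$ with $L_t=-\int_0^ty_u^\top\dd X_u-\int_0^tH(X_u)\dd u$, whose continuous martingale part is $-\int_0^\cdot y_u^\top\dd X^c_u$ and whose jumps are $\Delta L_s=-y_s^\top\Delta X_s$; Itô's formula for $z\mapsto e^z$ gives
\[
  \dd U_t=U_{t-}\dd L_t+\tfrac12U_{t-}\,y_t^\top\alpha(X_t)y_t\,\dd t+\int_{D\setminus\{0\}}U_{t-}\bigl(e^{-y_t^\top\xi}-1+y_t^\top\xi\bigr)\mu^X(\dd t,\dd\xi).
\]
Separately, the classical Itô formula applied to $f(t,X_t)$ yields $\dd f(t,X_t)=(\partial_t+\A)f(t,X_t)\,\dd t+\dd(\text{loc. mart.})_t$ with $\A$ as in \eqref{eq:affineGeneratorShort}, using that $f\in C_0^{1,2}$ has bounded derivatives so that $f(t,x+\xi)-f(t,x)-\langle\chi(\xi),\nabla_x f(t,x)\rangle$ is $\nu$-integrable.

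Next I would combine the two expansions via integration by parts, $\dd\bigl(U_tf(t,X_t)\bigr)=U_{t-}\dd f(t,X_t)+f(t-,X_{t-})\dd U_t+\dd[U,f(\cdot,X)]_t$, where the covariation splits into a continuous part $-U_t\langle\alpha(X_t)y_t,\nabla_xf(t,X_t)\rangle\dd t$ and a jump part $\int_{D\setminus\{0\}}U_{t-}\bigl(e^{-y_t^\top\xi}-1\bigr)\bigl(f(t,X_{t-}+\xi)-f(t,X_{t-})\bigr)\mu^X(\dd t,\dd\xi)$. Every $\mu^X$-integral occurring above is then rewritten as an integral against $\mu^X-\nu$ plus its $\nu$-compensator; the $(\mu^X-\nu)$-integrals together with the $\dd X^c$-integrals assemble into a local martingale. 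The technical point is to check that these compensators are well defined: near $\xi=0$ the relevant integrands are $O(|\xi|^2)$ (using $e^{-y_t^\top\xi}-1+y_t^\top\chi(\xi)=O(|\xi|^2)$, the extra factor $|\xi|$ from $f(t,x+\xi)-f(t,x)$, and boundedness of the second derivatives of $f$), while for $|\xi|\ge1$ one bounds by $\|f\|_\infty$ or $1$ times $|e^{-y_t^\top\xi}-1|$ and $|\xi-\chi(\xi)|$, which are $\nu$-integrable under \eqref{eq:expMoments} — precisely the argument of Step~1 in the proof of Proposition~\ref{lem:truemartingale}, which already records $e^{-y^\top x}-1\in G_{loc}(\mu^X)$.

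It then remains to identify the drift. From $U_{t-}\dd f$ one obtains $U_t(\partial_t+\A)f(t,X_t)\,\dd t$, and from the covariation one obtains exactly $U_t$ times the last two terms of \eqref{eq:Aydef}. The contribution of $f(t-,X_{t-})\dd U_t$ is $U_tf(t,X_t)$ times
\[
  \tfrac12y_t^\top\alpha(X_t)y_t-y_t^\top\beta(X_t)-H(X_t)+\int_{D\setminus\{0\}}\bigl(e^{-y_t^\top\xi}-1+y_t^\top\chi(\xi)\bigr)K(X_t,\dd\xi),
\]
and the bulk of the remaining (purely algebraic) work is to recognise this bracket as $F(-y_t)+\langle X_t,R(-y_t)\rangle-H(X_t)$: one writes $e^{-y_t^\top\xi}-1+y_t^\top\chi(\xi)=e^{\langle\xi,-y_t\rangle}-1-\langle\chi(\xi),-y_t\rangle$, inserts $\alpha(x)=a+\sum_{i\in I}\alpha^ix_i$, $\beta(x)=b+\sum_{i=1}^d\beta^ix_i$, $K(x,\dd\xi)=\mu^0(\dd\xi)+\sum_{i\in I}x_i\mu^i(\dd\xi)$, uses $\langle\beta(x)-b,-y_t\rangle=\sum_{i=1}^dx_i\langle\beta^i,-y_t\rangle$, and compares with \eqref{eq:FandR} at $u=-y_t$, noting that $R_i$ for $i\in J$ contributes only the linear term $\langle\beta^i,-y_t\rangle$, consistently with $\alpha^i,\mu^i$ not entering $\alpha(x),K(x,\cdot)$ for $i\in J$. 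Collecting the three contributions gives drift $U_t(\partial_t+\A^y_t)f(t,X_t)\,\dd t$, so \eqref{eq:martingaleProblemEt} equals $f(0,X_0)$ plus a local martingale, as claimed. The main obstacle is thus not conceptual but organizational: keeping the many compensator terms straight and verifying the above algebraic identity, with the one genuinely technical ingredient being the $G_{loc}(\mu^X)$ integrability imported from Proposition~\ref{lem:truemartingale}.
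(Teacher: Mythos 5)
Your proof is correct and follows essentially the same route as the paper: Itô's formula on the product $U_t f(t,X_t)$, computation of the covariation bracket (continuous and jump parts), compensation of the $\mu^X$-integrals, and algebraic identification of the resulting drift with $U_t(\partial_t+\A^y_t)f(t,X_t)$. The only difference is organizational: the paper obtains the semimartingale decomposition of $U$ by writing $U_t=\mathcal{E}(M)_t e^{V_t}$ with $M$ as in \eqref{eq:Mdef} and $V$ continuous of finite variation, thereby reusing the identity $E=\mathcal{E}(M)$ already established in Step~1 of the proof of Proposition~\ref{lem:truemartingale}, whereas you re-derive that decomposition by applying Itô's formula to $\exp(L_t)$ and compensating directly --- the same computation carried out inline.
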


\begin{proof}
  Define $E$ by \eqref{eq:Edef} with $g:=-y$. Then
  $E_t = \mathcal{E}(M)_t$ for $M$ as in \eqref{eq:Mdef}. Furthermore,
  $U_t = E_t \exp(V_t)$, where
  \[ V_t = \int_0^t F(-y_u) + \langle X_u , R(-y_u) \rangle \dd u -
    \int_0^t H(X_u) \dd u \] is continuous and of bounded variation,
  $[E,V]=0$ and thus
  \[ d U_t = d(\mathcal{E}(M)_t\exp(V_t)) = U_{t-} \dd M_t + U_t \dd
    V_t. \]

  For $f \in C_0^{1,2}(\R_+ \times D)$, Itô's formula shows that
  \[ f(t,X_t) = f(0,X_0) + \int_0^t (\partial_s + \A) f(s,X_s) \dd s +
    N_t, \quad t \geq 0,\] where $N$ is a local martingale and the
  continuous part of $N$ is
  $ \int \nabla_x f(s,X_{s-})^\top \dd X_s^c$. Combining this with
  \eqref{eq:characteristics}, the definition \eqref{eq:Mdef}, the fact
  that $f$ is bounded and $e^{-y^\top x} -1 \in G_{loc}(\mu^X)$, we
  obtain
  \[\begin{aligned} [M,N]_t & = \langle M^c , N^c \rangle_t + \sum_{s \leq t}(f(s,X_s)-f(s,X_{s-}))(e^{-y_s\Delta X_s} - 1) \\
      & \stackrel{.}{=} - \int_0^t y_s^\top \alpha(X_s) \nabla_x
      f(s,X_{s}) \dd s \\ & + \int_0^t
      (f(s,X_{s-}+\xi)-f(s,X_{s-}))(e^{-y_u^\top \xi} - 1)K(X_{u-},\dd
      \xi) \dd u , \end{aligned}\] where $U \stackrel{.}{=} V$ means
  that $U - V$ is a local martingale.

  Putting everything together, Itô's formula written in differential
  form gives
  \[\begin{aligned} d U_t f(t,X_t) \stackrel{.}{=} &  U_{t} (\partial_t + \A) f(t,X_t)\dd t  + f(t,X_{t-}) U_t d V_t + d [U,N]_t \\
      \stackrel{.}{=} & U_{t} \A^y_t f(t,X_t)\dd t ,
    \end{aligned}\] which shows that \eqref{eq:martingaleProblemEt} is
  a local martingale.\end{proof}

In the following Lemma, we allow the function spaces (defined before)
to contain complex valued functions.
\begin{lemma} \label{lem:spacetimeharmonic} There exists a dense
  subset $L \subset C_0(D)$ with the following property: for any
  $T > 0$, $h \in L$ there exists $u \in C^{1,2}([0,T] \times D)$
  bounded, satisfying
  \begin{equation}\label{eq:spacetimeharmonic}\begin{aligned}
      \partial_t u(t,x) + \A_t^y u(t,x) & = 0 \quad & (t,x) \in [0,T)\times D, \\
      u(T,x) & = h(x) \quad & x \in D.
    \end{aligned}\end{equation}

\end{lemma}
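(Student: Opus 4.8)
\emph{Plan.} The plan is to obtain $u$ as a Fourier superposition of affine--exponential solutions, using that --- by Proposition~\ref{prop:extendedGenerator} together with Proposition~\ref{thm:relationToDavis} --- the operator $\A^y_t$ from \eqref{eq:Aydef} is the generator of the (non-conservative) time-inhomogeneous affine transition semigroup $(\bar T^y_{s,t})=(P^y_{s,t})$, whose action on exponentials is governed by generalized Riccati equations. As $L$ I would take the set of restrictions to $D$ of the functions $x\mapsto\int_{\R^d} e^{i\langle\xi,x\rangle}\,\widehat h(\xi)\,\dd\xi$ with $\widehat h\in C_c^\infty(\R^d)$: each such function is the restriction of a Schwartz function and hence lies in $C_0(D)$, and $L$ is sup-norm dense in $C_0(D)$, since the inverse Fourier transforms of $C_c^\infty(\R^d)$ are dense in $\mathcal{S}(\R^d)$ and hence in $C_0(\R^d)$, while every $g\in C_0(D)$ extends by Tietze to some $\tilde g\in C_0(\R^d)$ whose uniform approximants in $L$ restrict to uniform approximants of $g$.

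\emph{Construction of $u$.} Fix $T>0$ and $h=\int e^{i\langle\xi,\cdot\rangle}\widehat h(\xi)\dd\xi\in L$. Working on an interval $[0,T]$ on which the parameters \eqref{eq:parametersFilter} attached to $\bar T^y$ (with $g(t)=-y_{t\wedge T}$ and the constants $c_0,x_0$ entering $H$ taken large enough relative to $y|_{[0,T]}$ that strong admissibility holds) define a sub-Markov time-inhomogeneous affine process, \eqref{eq:affineDefInhomogeneous} provides solutions $\Phi^{*}(\cdot,T,u),\Psi^{*}(\cdot,T,u)$ of the associated backward Riccati equations with $\bar T^y_{s,T}(e^{\langle u,\cdot\rangle})(x)=\exp(\Phi^{*}(s,T,u)+\langle x,\Psi^{*}(s,T,u)\rangle)$ for $u\in\U$. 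Since $i\xi\in\U$, I would set
\[
  u(t,x):=\bar T^y_{t,T}h(x)=\int_{\R^d}\widehat h(\xi)\,\exp\!\bigl(\Phi^{*}(t,T,i\xi)+\langle x,\Psi^{*}(t,T,i\xi)\rangle\bigr)\dd\xi,\qquad (t,x)\in[0,T]\times D,
\]
the second equality being Fubini's theorem, legitimate because $\widehat h\in L^1(\R^d)$ and $\bar T^y_{t,T}$ is a positive contraction on $B(D)$.

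\emph{Verification.} (i) Terminal value: $\Phi^{*}(T,T,i\xi)=0$ and $\Psi^{*}(T,T,i\xi)=i\xi$, so $u(T,x)=\int\widehat h(\xi)e^{i\langle\xi,x\rangle}\dd\xi=h(x)$. (ii) Boundedness: $|\exp(\Phi^{*}(t,T,i\xi)+\langle x,\Psi^{*}(t,T,i\xi)\rangle)|=|\bar T^y_{t,T}(e^{i\langle\xi,\cdot\rangle})(x)|\le 1$ for every $x\in D$; letting $x$ run to infinity along the coordinate axes forces $\Psi^{*}(t,T,i\xi)\in\U$ and $\Re\Phi^{*}(t,T,i\xi)\le 0$, whence $|u(t,x)|\le\|\widehat h\|_{L^1}$. (iii) Regularity: under \eqref{eq:expMoments} the Riccati vector fields are analytic (as in the proofs of Lemma~\ref{lem:admissible} and Lemma~\ref{lem:auxLem2}), so smooth dependence of ODE solutions on terminal data and on time makes $(t,\xi)\mapsto(\Phi^{*},\Psi^{*})(t,T,i\xi)$ of class $C^\infty$ with all derivatives bounded on $[0,T]\times\operatorname{supp}\widehat h$, and differentiating under the ($\xi$-compactly supported) integral gives $u\in C^\infty([0,T]\times D)\subset C^{1,2}$. (iv) The equation: it suffices to show $(\partial_t+\A^y_t)v_\xi=0$ on $[0,T)\times D$ for each $v_\xi(t,x):=\exp(\Phi^{*}(t,T,i\xi)+\langle x,\Psi^{*}(t,T,i\xi)\rangle)$ and then to carry $\partial_t+\A^y_t$ under $\int\widehat h(\xi)\dd\xi$ --- dominated convergence for the differential part of $\A^y_t$, and Fubini for the integral part in \eqref{eq:Aydef}, using $|v_\xi(t,x+\zeta)-v_\xi(t,x)|\le|e^{\langle\zeta,\Psi^{*}(t,T,i\xi)\rangle}-1|$ with the uniform bound on $\Psi^{*}$ and the integrability of $K(x,\cdot)$ from Lemma~\ref{lem:admissible}. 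The pointwise identity for $v_\xi$ is the infinitesimal form of $\bar T^y=P^y$: a direct computation from \eqref{eq:Aydef} and \eqref{eq:FandR} (which is the content of Proposition~\ref{thm:relationToDavis}) shows that $\A^y_t$ maps $e^{\langle w,\cdot\rangle}$ to $(F^y(t,w)+\langle x,R^y(t,w)\rangle)\,e^{\langle w,\cdot\rangle}$ for the time-inhomogeneous Riccati vector fields $(F^y,R^y)$, so taking $w=\Psi^{*}(t,T,i\xi)$ and using the backward Riccati equations solved by $\Phi^{*},\Psi^{*}$ yields $\partial_t v_\xi=v_\xi(\partial_t\Phi^{*}+\langle x,\partial_t\Psi^{*}\rangle)=-\A^y_t v_\xi$.

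\emph{Main obstacle.} The bookkeeping above is routine; the genuine difficulty is everything packed into ``working on an interval $[0,T]$ on which $\dots$''. One needs the parameters of Proposition~\ref{thm:relationToDavis}/Lemma~\ref{lem:admissible} to remain strongly admissible on $[0,T]$ --- which forces $c_0\ge\sup_{[0,T]}F(-y_t)$ and $x_0^i\ge\sup_{[0,T]}R_i(-y_t)$ for $i\in I$, i.e. the constants in $H$ to be chosen (suitably large) after $T$ --- and, correspondingly, the existence of $\Phi^{*}(\cdot,T,i\xi),\Psi^{*}(\cdot,T,i\xi)$ on all of $[0,T]$ with $\Psi^{*}(\cdot,T,i\xi)\in\U$, so that the exponential ansatz stays bounded on the half-space $D$. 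Both points rest on the dominating killing term $-H$ (non-conservativity) and on the comparison and admissibility estimates of this section (Lemma~\ref{lem:admissible}, Lemma~\ref{lem:auxLem1}, Lemma~\ref{lem:auxLem2}); the set $L$ itself is chosen once and for all, independently of $T$.
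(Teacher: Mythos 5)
The construction you propose is, at bottom, the paper's: take $h$ in a Fourier-type dense class, set $u(t,\cdot)$ equal to the time-inhomogeneous affine semigroup applied to $h$, and read the backward equation off the Riccati flow. There is, however, one genuine logical flaw in your write-up: you invoke Proposition~\ref{thm:relationToDavis} — explicitly in the plan, and operationally when you write $\bar{T}^y_{s,T}(e^{\langle u,\cdot\rangle})(x)=\exp(\Phi^{*}(s,T,u)+\langle x,\Psi^{*}(s,T,u)\rangle)$ and when you justify Fubini and the bound $\Psi^{*}\in\U$ by ``$\bar{T}^y_{t,T}$ is a positive contraction''. But Lemma~\ref{lem:spacetimeharmonic} is one of the two ingredients the paper uses to \emph{prove} Proposition~\ref{thm:relationToDavis}; the identity $\bar{T}^y=P^y$ (and hence the sub-Markov property of $\bar{T}^y$) is the conclusion of that proposition, not an available tool at this stage. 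The flaw is repairable, because nothing in your verification actually requires $\bar{T}^y$: replace it throughout by the abstract semigroup $P^y_{s,t}$ furnished by \cite{Filipovic2005} from the admissible parameters of Lemma~\ref{lem:admissible} together with the killing terms $c(t),\gamma(t)$ of Proposition~\ref{thm:relationToDavis}. That construction already gives \eqref{eq:affineDefInhomogeneous}, hence $|\exp(\Phi^{*}+\langle x,\Psi^{*}\rangle)|\le 1$ on $D$, $\Psi^{*}\in\U$, the Fubini step, and existence of the Riccati solutions on $[0,T]$; the pointwise identity $\A^y_t e^{\langle w,\cdot\rangle}=\bigl(F^y(t,w)+\langle x,R^y(t,w)\rangle\bigr)e^{\langle w,\cdot\rangle}$ is then a direct computation from \eqref{eq:Aydef} and \eqref{eq:FandR}, with no circularity. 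This is exactly what the paper does by setting $u(t,x):=P_{t,T}h(x)$.

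Beyond that, your route differs from the paper's in two inessential but noteworthy ways. The paper takes $L$ to be the span of the class $\Theta_0$ of \cite[Proposition~8.2]{duffie2003} (Fourier integrals over the $J$-frequencies only, with a fixed damping exponent $v\in\C_{--}^m$ in the $I$-variables), so that density is \cite[Lemma~8.4]{duffie2003} and the $C^{1,2}$ regularity together with \eqref{eq:spacetimeharmonic} are quoted from \cite[Proposition~6.3]{Filipovic2005}; the whole proof is three citations. Your class — restrictions to $D$ of inverse Fourier transforms of $C_c^\infty(\R^d)$ — is also dense (the Tietze step needs the small extra remark that the extension can be chosen in $C_0(\R^d)$, e.g.\ via the one-point compactification), and since $i\xi\in\U$ the transform formula applies to it; but you then owe the regularity and the interchange of $\partial_t+\A^y_t$ with the $\xi$-integral by hand, which your sketch handles correctly on the compact support of $\widehat h$ but at greater length than the citation. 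Finally, your ``choose $c_0,x_0$ after $T$'' remark quietly changes the operator $\A^y_t$ (which contains $H$), so strictly you establish the lemma only for those $T$ on which the given $H$ dominates $F(-y_\cdot)$ and $R_I(-y_\cdot)$; this is the same implicit restriction under which the paper's $P_{t,T}$ is defined, and it is all that the application in Proposition~\ref{thm:relationToDavis} requires, but it should be stated rather than buried in a parenthesis.
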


\begin{proof} Denote by $\Theta_0 \subset C_0(D)$ the set of
  $\C$-valued functions from \cite[Proposition~8.2]{duffie2003}. Any
  $h \in \Theta_0$ is of the form
  \[ h(x) = \int_{\R^n} e^{(v,i q)^\top x} g(q) \dd q , \quad x \in
    D, \] for some $g \in C_c^\infty(\R^n)$ and $v \in \C_{--}^m$.
  Denote by $L$ the complex linear span of $\Theta_0$.  In
  \cite[Lemma~8.4]{duffie2003} it is shown that $L$ dense in $C_0(D)$.

  Fix $T > 0$ and $h \in \Theta_0$.  For $(t,x)\in [0,T]\times D$,
  define $u(t,x):=P_{t,T}h(x)$. Then $u$ is bounded, satisfies
  $u(T,\cdot)=h$ and, as established in the proof of
  \cite[Proposition~6.3]{Filipovic2005},
  $u \in C^{1,2}([0,T] \times D)$ and \eqref{eq:spacetimeharmonic}
  indeed holds.
\end{proof}

\subsection{Proof of Proposition~\ref{thm:relationToDavis}}

\begin{proof}[Proof of Proposition~\ref{thm:relationToDavis}]
  Since $x_0^{i} > 0$ for $i \in I$, $y_0=0$, $R(0)=0$ and $R$, $y$
  are continuous, there exists $T>0$ such that
  $x_0^{i} - R_i(-y_t) \geq 0$ for $t \in [0,T]$. Taking
  $c_0=\sup_{t \in [0,T]} F(-y_t)$ it follows that $c(t) \geq 0$ and
  $\gamma(t) \in \R^m_+$ for all $t \in [0,T]$. Combining this with
  Lemma~\ref{lem:admissible}, it follows that the parameters are
  indeed admissible.

  To prove the proposition, it suffices to show
  $\bar{T}_t^y = P_{0,t}^y$ and for this it is sufficient to show
  $\bar{T}_t^y h = P_{0,t}^y h$ for all $h$ in a dense subset of
  $C_0(D)$. Taking $L$ from Lemma~\ref{lem:spacetimeharmonic}, for any
  $h \in L$ we find $f \in C_0^{1,2}([0,t]\times D)$ such that
  $f(t,\cdot) = h$ and the $\d u$-integral in
  \eqref{eq:martingaleProblemEt} vanishes. Hence
  \[ N_s := U_{s\wedge t} f(s\wedge t,X_{s\wedge t}), \quad s \geq
    0,\] is a local martingale by
  Proposition~\ref{prop:extendedGenerator}, where $U$ is as in
  \eqref{eq:Udef}.
  On the other hand,
  \[U_t = \exp(V_t)E_t, \] where $E$ is defined in \eqref{eq:Edef}
  (with $g=-y$) and
  \[V_t := \int_0^t F(-y_u) + \langle X_u , R(-y_u) \rangle \dd u -
    \int_0^t H(X_u) \dd u ,\quad t \geq 0.\] Since $\beta^j = 0$ for
  $j \in J$ and $x^i \geq 0$ for $i \in I$,
  \[ H(x) - F(-y_t) - \langle x , R(-y_t) \rangle = c(t) + x^
    \top\gamma(t) \geq 0 \] for all $(t,x) \in [0,T] \times D$. Thus
  $V_t \leq 0$ for $t \in [0,T]$ and $\exp(V_t)$ is bounded on
  $[0,T]$.  Since $E$ is a martingale by
  Proposition~\ref{lem:truemartingale}, the local martingale $N$
  satisfies
  \[N_s = \exp(V_{t \wedge s})E_{t \wedge s} f(s\wedge t, X_{s \wedge
      t}), \quad s \geq 0, \] and is the product of a bounded process
  and a martingale. Thus $N$ is a true martingale and combining this
  with $f(t,\cdot)=h$, the definition
  \eqref{eq:modifiedZakaiFunctional} and $f(s,\cdot)= P_{s,t} h$ (see
  Lemma~\ref{lem:spacetimeharmonic}) yields
  \begin{equation} \begin{aligned} \bar{T}_t^y h(x) = \E_x[U_t
      f(t,X_t)] = \E_x[N_t] = \E_x[N_0] = f(0,x) =
      P_{0,t}h(x). \end{aligned} \end{equation}

\end{proof}

\subsection{Proof of Theorem~\ref{thm:genRiccatiFilter} and
  \ref{thm:timeInhomogeneousAffine}}\label{sec:mainproofs}

\begin{proof}[Proof of Theorem~\ref{thm:genRiccatiFilter}]
  We proceed in two steps: First \eqref{eq:FourierCoeff} is verified
  under the assumption that a solution to \eqref{eq:genRiccatiFilter}
  exists. In the second part, existence and uniqueness for
  \eqref{eq:genRiccatiFilter} is established.

  \textbf{Expression for the Fourier coefficients:} Since $\Psi$ is
  continuously differentiable, each component is of finite variation
  and thus $[\Psi^j,X^j]=0$ for all $j$.  By the product rule and
  \eqref{eq:genRiccatiFilter},
  \begin{equation}\label{eq:auxEqAff4}\begin{aligned} (u+y_T)^\top X_T
      - \Psi(0,T,u)^\top X_0 & =
      \Psi(T,T,u)^\top X_T - \Psi(0,T,u)^\top X_0\\  & = \int_0^T \Psi(s,T,u)^\top \dd X_s  + \int_0^T X_s^\top \partial_s \Psi(s,T,u) \dd s \\
      &= \int_0^T \Psi(s,T,u)^\top \dd X_s - \int_0^T X_s^\top(
      R(\Psi(s,T,u)-y_s) - \gamma_s) \dd s.
    \end{aligned}\end{equation}

  By Proposition~\ref{lem:truemartingale} applied to the continuous
  function $g \colon \R_+ \to \R^d$ defined by
  \[ g(t):= \Psi(t \wedge T ,T,u)-y_{t \wedge T},\] the process
  \[\tilde{E}_t := \exp\left(\int_0^t g_u^\top \dd X_u -\int_0^t
      F(g_u) + \langle X_u , R(g_u) \rangle \dd u \right), \quad t
    \geq 0, \] is a martingale.

  Combining this with \eqref{eq:auxEqAff4} and the definition of
  $\rho$ we obtain
  \begin{equation}\label{eq:Fourierderivation} \begin{aligned}
      \rho^x_T(f_u,y) & = \E\left[\exp \left((u+y_T)^\top X_T - \int_0^T y_s^\top \dd X_s- \int_0^T c_s + X_s^\top \gamma_s \dd s  \right)\right]\\
      & = \E\bigg[ \exp \bigg(\Psi(0,T,u)^\top X_0 + \int_0^T (\Psi(s,T,u) - y_s)^\top \dd X_s   \\ & \quad \quad - \int_0^T c_s  X_s^\top R(\Psi(s,T,u)-y_s) \dd s  \bigg)\bigg] \\
      & = \E\left[ \tilde{E}_T \exp\left(\Psi(0,T,u)^\top X_0 +
          \int_0^T F(\Psi(s,T,u) - y_s) - c_s \dd s \right)\right]
      \\
      & = \exp(\Phi(0,T,u)+ \Psi(0,T,u)^\top x ).
    \end{aligned}
  \end{equation}

  \textbf{Existence and uniqueness of solutions to
    \eqref{eq:genRiccatiFilter}:} Suppose first for some $T>0$ there
  exists $\tilde{\Psi} \in C^1([0,T],\R^d)$ satisfying
  \begin{equation}\label{eq:forwardRicatti} \partial_t
    \tilde{\Psi}(t,u)=R(\tilde{\Psi}(t,u)+y_T-y_{T-t})-\gamma_{T-t},
    \quad \tilde{\Psi}(0,u)=u. \end{equation}
  Then a solution to \eqref{eq:genRiccatiFilter} is obtained by setting $\Psi(t,T,u):=\tilde{\Psi}(T-t,u)+y_T$ and 
  \[ \Phi(t,T,u) = \int_t^T F(\Psi(s,T,u)-y_s) - c_s \dd s, \quad t
    \in [0,T]. \] Conversely, any solution to
  \eqref{eq:genRiccatiFilter} gives rise to $\tilde{\Psi}$ satisfying
  \eqref{eq:forwardRicatti} by setting
  $\tilde{\Psi}(t,u):=\Psi(T-t,T,u)-y_T$.  Thus, to prove the theorem
  it suffices to construct $T>0$ such that for all $u \in i \R^d$
  there exists a unique $\tilde{\Psi}(\cdot,u) \in C^1([0,T],\R^d)$
  satisfying \eqref{eq:forwardRicatti}. To do so, we will establish
  the following statements:

  \begin{itemize}
  \item[(i)] for any $T>0$, $u \in \C^d$, there exists
    $t_+(u,T) \in (0,\infty]$ such that \eqref{eq:forwardRicatti} has
    a unique solution on $[0,t_+(u,T))$. If $t_+(u,T) < \infty$, then
    $\lim_{t \uparrow t_+(u,T)} |\tilde{\Psi}(t,u)|=\infty$.
  \item[(ii)] there exists $T_0 > 0$ such that $t_+(0,T_0) > T_0$,
    i.e. the solution to \eqref{eq:forwardRicatti} with $u = 0$,
    $T=T_0$ exists on $[0,T_0]$.
  \item[(iii)] for any $u \in i \R^d$, $t_+(u,T_0) > T_0$.
  \end{itemize}
  Then (iii) implies that for any $u \in i \R^d$ there exists a unique
  solution to \eqref{eq:forwardRicatti} on $[0,T_0]$, which proves the
  theorem. We now show (i)-(iii). In what follows, we set $y_r:=0$ for
  $r<0$ so that $y \in C(\R,\R^d)$.

  \textbf{(i)} By \cite[Lemma~5.3]{duffie2003} and
  \eqref{eq:expMoments}, $R$ is an analytic function. In particular it
  is locally Lipschitz continuous. Combining this with the fact that
  $y$ is continuous, (i) follows from the global existence and
  uniqueness result for ordinary differential equations
  \cite[Theorem~7.6]{Amann1990}.

  \textbf{(ii)} For $(t,z,T) \in \R \times \C^d \times \R$, set
  \[ f(t,z,T):= R(z+y_T - y_{T-t}) - \gamma_{T-t}. \] Then
  $f \in C(\R \times \C^d \times \R, \C^d)$ and, since $R$ is locally
  Lipschitz-continuous, the prerequisites
  of\cite[Theorem~8.3]{Amann1990} are satisfied. Thus, the set
  \[ D:=\{(t,\tau,u,T)\in \R\times\R \times \C^d \times \R \; : \; t
    \in J(\tau,u,T) \} \] is open, where $J(\tau,u,T)$ is the maximal
  interval of existence of the (unique) solution to
  \[ \dot x(t) = f(t,x(t),T), \quad x(\tau)=u. \] Since
  $(0,0,0,0) \in D$ and $D$ is open, $(T_0,0,0,T_0) \in D$ for $T_0>0$
  small enough. Thus $T_0 \in J(0,0,T_0)$ and, since the right
  endpoint of the open interval $J(0,0,T_0)$ is $t_+(0,T_0)$, the
  claim follows.

  \textbf{(iii)} Fix $u \in i \R^d$. By (ii), $t_+(0,T_0)>T_0$ and so
  it suffices to show that $t_+(u,T_0) \geq t_+(0,T_0)$ or, by (i),
  that $|\tilde{\Psi}(t,u)|$ does not explode on $[0,T_0]$. Consider
  the $J$-components first. By \eqref{eq:driftcond3}, for $j \in J$
  \eqref{eq:forwardRicatti} is given as
  \begin{equation}\label{eq:auxEqAff11} \partial_t
    \tilde{\Psi}_j(t,u)= \langle \beta^j ,
    \tilde{\Psi}_J(t,u)+y_J(T)-y_J(T-t) \rangle -(\gamma_{T-t})_j ,
    \quad \tilde{\Psi}_j(0,u)=u_j \end{equation}
  and, as this is a system of first order linear equations, $\tilde{\Psi}_J(t,u)$ exists for all $t \geq 0$. Thus it remains to analyze the $I$-components. We claim that there exists constants $c_0, c_1 > 0$ such that for all $t \in [0,T_0 \wedge t_+(u,T_0))$
  \begin{equation}
    \label{eq:nonexplosion}
    \partial_t |\tilde{\Psi}_I(t,u)|^2 \leq c_0(c_1+|\tilde{\Psi}_I(t,u)|^2).
  \end{equation}
  Assuming that \eqref{eq:nonexplosion} has been established,
  Gronwall's inequality applied to $c_1+|\tilde{\Psi}_I(t,u)|^2$
  implies
  \begin{equation}\label{eq:auxEqAff7} |\tilde{\Psi}_I(t,u)|^2 \leq
    (c_1+|u_I|^2)\exp\left(c_0 t\right) - c_1 \end{equation}
  for all $t \in [0,T_0 \wedge t_+(u,T_0))$. This allows to conclude (iii) by contradiction: If $T_0 \geq t_+(u,T_0)$, then \eqref{eq:auxEqAff7} holds for all $t \in [0,t_+(u,T_0))$ and the left hand side of \eqref{eq:auxEqAff7} explodes as $t \uparrow T_0$, whereas the right hand side is bounded by its value at $T_0$. Hence, by contradiction $T_0 < t_+(u,T_0)$ as claimed.

  Therefore it suffices to establish \eqref{eq:nonexplosion}.  To do
  so, we follow the proof of \cite[Proposition~6.1]{duffie2003} and
  \cite[Proposition~5.1]{keller-ressel2015}. For $t \in \R$, set
  $\bar{y}(t):= y_{T_0}-y_{T_0-t}$. As argued above,
  \eqref{eq:auxEqAff11} implies that $\tilde{\Psi}_J(t,u)$ exists for
  all $t \geq 0$. Furthermore, the real part of \eqref{eq:auxEqAff11}
  does not depend on $u$ and therefore
  \begin{equation}\label{eq:auxEqAff12}
    \Re \tilde{\Psi}_J(t,u) = \Re \tilde{\Psi}_J(t,0) = \tilde{\Psi}_J(t,0).
  \end{equation}
  Set $T:=t_+(0,T_0) \wedge t_+(u,T_0)$ and for
  $(t,x) \in [0,T] \times \R^m$,
  \[ f(t,x):= R_I((x,\tilde{\Psi}_J(t,0)) +\bar{y}(t)) -
    (\gamma_{T_0-t})_I. \] Then by
  \cite[Lemma~5.7]{keller-ressel2015}, continuity of $y$ and Lipschitz
  continuity of $R_I$, $f$ satisfies the conditions of the comparison
  result \cite[Proposition~A.2]{Mayerhofer2011a}. Furthermore,
  \eqref{eq:auxEqAff12} and the inequality
  $\Re R_i(z) \leq R_i(\Re(z))$ (valid for all $z \in \C^d$) yield
  \[\partial_t \Re \tilde{\Psi}_i(t,u) - f_i(t,\Re
    \tilde{\Psi}_I(t,u)) \leq 0 = \partial_t \tilde{\Psi}_i(t,0) -
    f_i(t, \tilde{\Psi}_I(t,0)) , \quad \Re
    \tilde{\Psi}_i(0,u)=\tilde{\Psi}_i(0,0) \] for $t \in [0,T)$,
  $i \in I$. Hence the comparison result
  \cite[Proposition~A.2]{Mayerhofer2011a} implies
  \begin{equation}\label{eq:comparison}
    \Re \tilde{\Psi}_i(t,u) \leq \tilde{\Psi}_i(t,0), \quad \forall i \in I, \; t \in [0,t_+(0,T_0) \wedge t_+(u,T_0))
  \end{equation}

  For $t \in [0,T_0 \wedge t_+(u,T_0))$ one uses
  \eqref{eq:forwardRicatti} to write
  \begin{equation}\label{eq:auxEqAff6}\begin{aligned} \frac{1}{2}\partial_t |\tilde{\Psi}_I(t,u)|^2 & = \Re \langle \overline{\tilde{\Psi}_I}(t,u) ,  \partial_t \tilde{\Psi}_I(t,u) \rangle \\
      & = \Re \langle \overline{\tilde{\Psi}_I}(t,u)+ \bar{y}(t),
      R_I(\tilde{\Psi}(t,u)+\bar{y}(t))\rangle - \langle \bar{y}(t),
      \Re R_I(\tilde{\Psi}(t,u)+\bar{y}(t))\rangle \\ & \quad -
      \langle \Re \tilde{\Psi}_I(t,u),(\gamma_{T_0-t})_I \rangle \\ &
      = I_1 - I_2 - I_3, \end{aligned} \end{equation} where each $I_i$
  denotes an inner product. The three inner products in
  \eqref{eq:auxEqAff6} can be estimated separately:

  For the first one, denote by $g \in C(\R^d,\R_+)$ the function from
  Lemma~\ref{lem:auxLem1}, write $x^{+,I}:=(x_I^+,x_J)$ for
  $x \in \R^d$ and recall $g(x)= g(x^{+,I})$. By \eqref{eq:comparison}
  and the fact that $\tilde{\Psi}_J(t,u)$ exists for all $t \geq 0$,
  there exists $K \subset \R^d$ compact such that
  $(\Re \tilde{\Psi}(t,u)+\bar{y}(t))^{+,I} \in K$ for all
  $t \in [0,T_0 \wedge t_+(u,T_0))$. Hence Lemma~\ref{lem:auxLem1}
  yields
  \begin{equation} \label{eq:auxEqAff8} \begin{aligned}
      I_1 &\leq g(\Re \tilde{\Psi}(t,u)+\bar{y}(t))(1+|\tilde{\Psi}_J(t,u)+ \bar{y}_J(t)|^2) (1+ |\tilde{\Psi}_I(t,u)+ \bar{y}_I(t)|^2) \\
      & \leq 4 g((\Re \tilde{\Psi}(t,u)+\bar{y}(t))^{+,I})(1+|\tilde{\Psi}_J(t,u)+ \bar{y}_J(t)|^2) (C_1+ |\tilde{\Psi}_I(t,u)|^2) \\
      & \leq C_0 (C_1+ |\tilde{\Psi}_I(t,u)|^2)
    \end{aligned} \end{equation} where
  $C_0:=(4\sup_{x \in K} g(x) \wedge 1) \sup_{t \in [0,T_0]}
  (1+|\tilde{\Psi}_J(t,u)+ \bar{y}_J(t)|^2)$ and
  $C_1:=1+2 \sup_{t \in [0,T_0]}|\bar{y}(t)|^2$.

  For the second one, Lemma~\ref{lem:auxLem2}, the fact that
  $\tilde{\Psi}_J(t,u)$ exists for all $t \geq 0$ and
  \eqref{eq:comparison} yield that there exists $C > 0$ such that
  \begin{equation}\label{eq:auxEqAff9}\begin{aligned} - I_2 & \leq
      |\bar y(t)||R_I(\tilde{\Psi}(t,u)+\bar{y}(t))| \\ & \leq C C_1
      (1+ |\tilde{\Psi}(t,u)+ \bar{y}(t)|^2) \\ & \leq 4 C C_1 (C_0 +
      C_1 + |\tilde{\Psi}_I(t,u)|^2) \end{aligned} \end{equation} and
  for the last one
  \begin{equation}\label{eq:auxEqAff10} - I_3 \leq
    |\gamma_{T_0-t}||\tilde{\Psi}_I(t,u)| \leq \sup_{s \in
      [0,T_0]}|\gamma_s| (1+|\tilde{\Psi}_I(t,u)|^2).  \end{equation}
  Combining \eqref{eq:auxEqAff6} with the estimates \eqref{eq:auxEqAff8},\eqref{eq:auxEqAff9} and \eqref{eq:auxEqAff10} yields \eqref{eq:nonexplosion}, as desired.
\end{proof}

\begin{proof}[Proof of Theorem~\ref{thm:timeInhomogeneousAffine}]
  Precisely as in the derivation of \eqref{eq:Fourierderivation}, one
  combines the definition \eqref{eq:modifiedZakaiSmoother}, the
  product rule \eqref{eq:auxEqAff4} for $u=0$ and the definition of
  $E_t$ in \eqref{eq:Edef} to write
  \[\begin{aligned} \rho_t(G,y) & =\E \left[G(X_{[0,t]}) E_t  \exp\left(\Psi(0,t,0)^\top X_0 + \int_0^t F(g_s) - c_s \dd s \right) \right] \\
      & = \exp(\Phi(0,t,0)) \int_D \exp(\langle x, \Psi(0,t,0)
      \rangle) \E_{x}[G(X_{[0,t]}) E_t ] \pi_0(\d x).
    \end{aligned} \] But for any $x \in D$,
  \begin{equation*}
    \E_x \left[G(X_{[0,t]}) E_t \right]= \E_{\mathbb{Q}_{x}^{y,t}}\left[ G(X_{[0,t]})  \right]
  \end{equation*}
  with $\mathbb{Q}_{x}^{y,t} = \Q$ as in
  Lemma~\ref{lem:truemartingale}(ii). Thus the statement follows from
  the definition of $\bar{\pi}_t(G,y)$ and
  Lemma~\ref{lem:truemartingale}(ii).
\end{proof}

\section{Illustration: Filtering a Cox-Ingersoll-Ross process}\label{sec:CIR}
In this section the methodology developed in
Section~\ref{sec:linearizedFiltering} is applied to the problem of
filtering a Cox-Ingersoll-Ross process. We compare the approximation
via our linearized filtering functional (LFF) (respectively the
induced affine functional filter (AFF)) and other existing approximate
filtering methods to the true solution.
\subsection{Problem formulation}
A Cox-Ingersoll-Ross (CIR) process is a weak solution to the
stochastic differential equation
\begin{equation} \label{eq:CIRdynamics} d X_t = (b + \beta X_t) d t +
  \sigma \sqrt{X_t} d B_t , \quad X_0 = x, \end{equation} where
$b \geq 0$, $\beta \in \R$, $\sigma > 0$ and $B$ is a Brownian
motion. Denoting by $\P_x$ the law of $X$, this gives rise to a
conservative affine process with state space $D = \R_+$. The
parameters in \eqref{eq:collectionOfParameters} are given as
$(0,\sigma^2,b,\beta,0,0,0,0)$.  Let $W$ a Brownian motion independent
of $X$, $\Gamma > 0$ and set
\begin{equation} \label{eq:CIRObservation} Y_t = \int_0^t X_s \dd s +
  \Gamma W_t, \quad t \geq 0.
\end{equation}
The goal is to calculate, for any $t \geq 0$, the distribution of
$X_t$ conditional on the $\sigma$-algebra generated by
$(Y_s)_{s \in [0,t]}$ (see Section~\ref{subsec:filtering}). In
particular, we are interested in the conditional mean and variance
\begin{equation}\label{eq:condMeanAndVar}\begin{aligned}
    \hat{x}_t & = \E_x[X_t | \mathcal{F}_t^Y], \\
    V_t & = \E_x[(X_t-\hat{x}_t)^2 | \mathcal{F}_t^Y], \quad t \geq 0.
  \end{aligned} \end{equation}

There are various methods available to numerically approximate
\eqref{eq:condMeanAndVar}.  For any of these methods one has to pass
to a setup of discrete-time observations at some stage. To do this we
fix $T > 0$, $N \in \mathbb{N}$ and a time-grid
$0 = t_0 < t_1 < \ldots < t_N = T$. Instead of observing the entire
path \eqref{eq:CIRObservation}, one observes at time $t_i$ the random
variable
\begin{equation}\label{eq:discreteObsModel} y_i = X_{t_{i}} (t_{i} -
  t_{i-1}) + \Gamma \sqrt{t_{i} - t_{i-1}} \varepsilon_i
  , \end{equation}
for $i=1,\ldots N$, where $\varepsilon_1,\ldots \varepsilon_N$ are i.i.d. standard normal random variables. This amounts to discretizing the integral in \eqref{eq:CIRObservation} using a Riemann sum and setting $y_i = Y^{disc}_{t_i} - Y^{disc}_{t_{i-1}}$. The filtering distribution is then approximated as  $\E_x[f(X_{t_n}) | \mathcal{F}_{t_n}^Y] \approx \pi_{t_n}^N(f)$ with
\begin{equation} \label{eq:filteringApprox} \pi_{t_n}^N(f) :=
  \E_x[f(X_{t_n}) | \mathcal{F}_{t_n}^{Y,N}], \end{equation} for any
measurable $f \colon \R_+ \to \C$ satisfying
$\E_x[|f(X_t)|] < \infty$, where
$\mathcal{F}_{t_n}^{Y,N} = \sigma(y_1,\ldots y_n)$ and $n=1,\ldots
N$. In particular, instead of \eqref{eq:condMeanAndVar} in what
follows we will denote
\begin{equation}\label{eq:condMeanAndVarApprox}\begin{aligned}
    \hat{x}_t & = \E_x[X_t | \mathcal{F}_t^{Y,N}], \\
    V_t & = \E_x[(X_t-\hat{x}_t)^2 | \mathcal{F}_t^{Y,N}], \quad t \in
    \{t_0,\ldots t_N\}.
  \end{aligned} \end{equation}

\subsection{Numerical solution: Approximate filtering methods}
There are various methods at hand to numerically approximate
\eqref{eq:filteringApprox} and \eqref{eq:condMeanAndVarApprox}. To
illustrate the quality of these we first generate a sample path of the
signal and observation process.  More precisely, a sample of
$(X_{t_0},X_{t_1},\ldots,X_{t_N})$ is generated by (exact) sampling
from the transition density (see \cite[Section~3.4]{Glasserman2004}).
Based on this sample, a sample of $(y_1,\ldots,y_N)$ is generated
using \eqref{eq:discreteObsModel}.

For this sample observation we now compare different methods for
approximating \eqref{eq:filteringApprox} and
\eqref{eq:condMeanAndVarApprox}. As a benchmark we calculate
\eqref{eq:condMeanAndVarApprox} using a (bootstrap) particle filter
with sufficiently many particles ($10^6$ in the examples below), see
\cite[Chapter~10]{Bain2009}. In the plots these results will be
denoted by $\hat{x}$ and $V$ by slight abuse of notation.

This benchmark is now compared to the approximation using the
linearized filtering functional (LFF, developed in the present paper)
and two standard approximations (explained in more detail below): A
Gamma-approximation (\cite{Bates2006}) and a normal approximation
(\cite{Geyer1999}, see also \cite{Brigo1998}). The respective
approximations to \eqref{eq:condMeanAndVarApprox} are denoted as
follows:
\begin{itemize}
\item[Normal] $\hat{x}^{(EKF)}$, $V^{(EKF)}$
\item[Gamma] $\hat{x}^{(G)}$, $V^{(G)}$
\item[LFF] $\hat{x}^{(LFF)}$, $V^{(LFF)}$.
\end{itemize}
 
Firstly, let us explain the approximations from \cite{Bates2006} and
\cite{Geyer1999} in more detail. In both cases basic idea is to
\textit{postulate} that (at each time-step $t_n$) the conditional
distribution in \eqref{eq:filteringApprox} belongs to a certain
two-parameter family of probability distributions (Normal in
\cite{Geyer1999} and Gamma in \cite{Bates2006}). Then (at each
time-step $t_n$) one only needs to approximate
\eqref{eq:condMeanAndVarApprox} and determine the two parameters from
this. In \cite{Geyer1999} the updating procedure for
\eqref{eq:condMeanAndVarApprox} is based on the exact formulas for the
mean and variance of a CIR process and the Kalman filter. This can be
seen as a version of the extended Kalman filter. In \cite{Bates2006}
numerical integration on the level of characteristic functions is used
to update \eqref{eq:condMeanAndVarApprox}. We refer to these articles
for more details.  Both approximations \cite{Bates2006} and
\cite{Geyer1999} can be viewed as special cases of the projection
filter (first introduced in \cite{Brigo1998a}), see \cite{Brigo1998}.

Finally, the unconditional mean and variance are denoted by
$\bar{x}_t:= \E_x[X_t]$ and $v_t:=\E_x[(X_t-\bar{x}_t)^2]$. Since
these correspond to a situation where no observations are available, a
comparison of $(\bar{x},v)$ and $(\hat{x},V)$ shows how much
information the (sample path of the) observation $(y_1,\ldots,y_N)$
contains about $X$. Therefore, these are also shown in the plots
below.

\subsection{Discussion}
We now compare the methods introduced above for two sets of
parameters. For both settings the following choices have been made:
\begin{itemize}
\item instead of a constant $x$, the signal process $X$ is started
  from $X_0=\max(0,Z)$, where $Z \sim \mathcal{N}(x_0,s_0^2)$ is
  independent of $B$ and $W$,
\item the time horizon is $T=1$ and the discretization uses an
  equidistant grid $t_i = i T/N $, $i=0,\ldots N$,
\item $N=1000$, $\sigma = 0.04$, $\beta = -0.2$ and
  $s_0=2 \cdot 10^{-5}$.
\end{itemize}
The remaining parameter values differ for the two settings; they are
indicated in the caption of the figures.

\textbf{Case 1} We choose $b=10^{-6}$, $\Gamma = x_0 = 0.005$.
Figures~\ref{FigAff1} and \ref{FigAff2} show the same sample path of a
CIR process. The sample of observations is not shown in the plot, but
one clearly sees that for $t$ sufficiently large the conditional mean
$\hat{x}$ is neither very close to $X$ nor very close to the mean
$\hat{x}$. Thus, the filtering problem is indeed not trivial: the
posterior distribution in \eqref{eq:filteringApprox} is neither close
to the distribution of $X_{t_n}$ nor concentrated at $X_{t_n}$.

In both figures the conditional mean $\hat{x}$ is shown along with
(dotted) ``confidence bounds'' given by $\hat{x} + \sqrt{V}$ and
$\hat{x} - \sqrt{V}$. This allows to show both conditional mean and
variance in the same plot. The analogous bounds are also shown for the
unconditional mean and the different approximations.

The two figures illustrate that the linearized filtering functional
provides a more accurate approximation for
\eqref{eq:condMeanAndVarApprox} than the standard methods.

\textbf{Case 2} We choose $b=2 \cdot 10^{-5}$,
$\Gamma = x_0 = 0.0001$. In this case both the approximation using the
linearized filtering functional (LFF) and the normal approximation are
not very good. However, it appears that the LFF-approximation becomes
better as $t$ approaches $1$. Although this behaviour is typical in
the present parameter regime, a precise explanation (possibly based on
ergodicity properties of the CIR process) is presently not available.

\begin{figure}
  \centering
  \includegraphics[trim={1cm 1cm 1cm
    1.6cm},clip,width=0.9\textwidth]{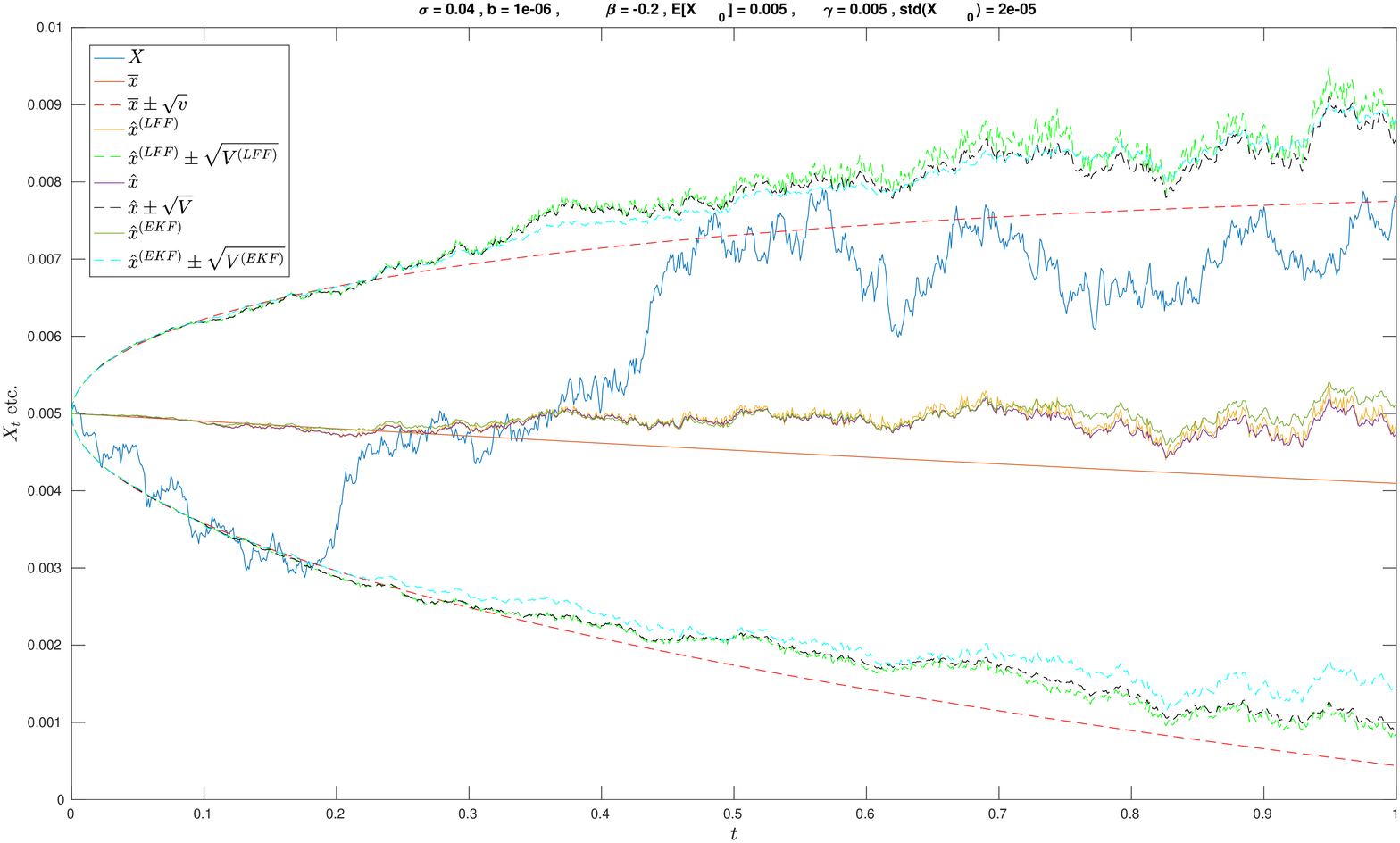}
  \caption{Case 1: Comparison with extended Kalman filter.}
  \label{FigAff1}
\end{figure}

\begin{figure}
  \centering
  \includegraphics[trim={1cm 1cm 1cm
    1.6cm},clip,width=0.9\textwidth]{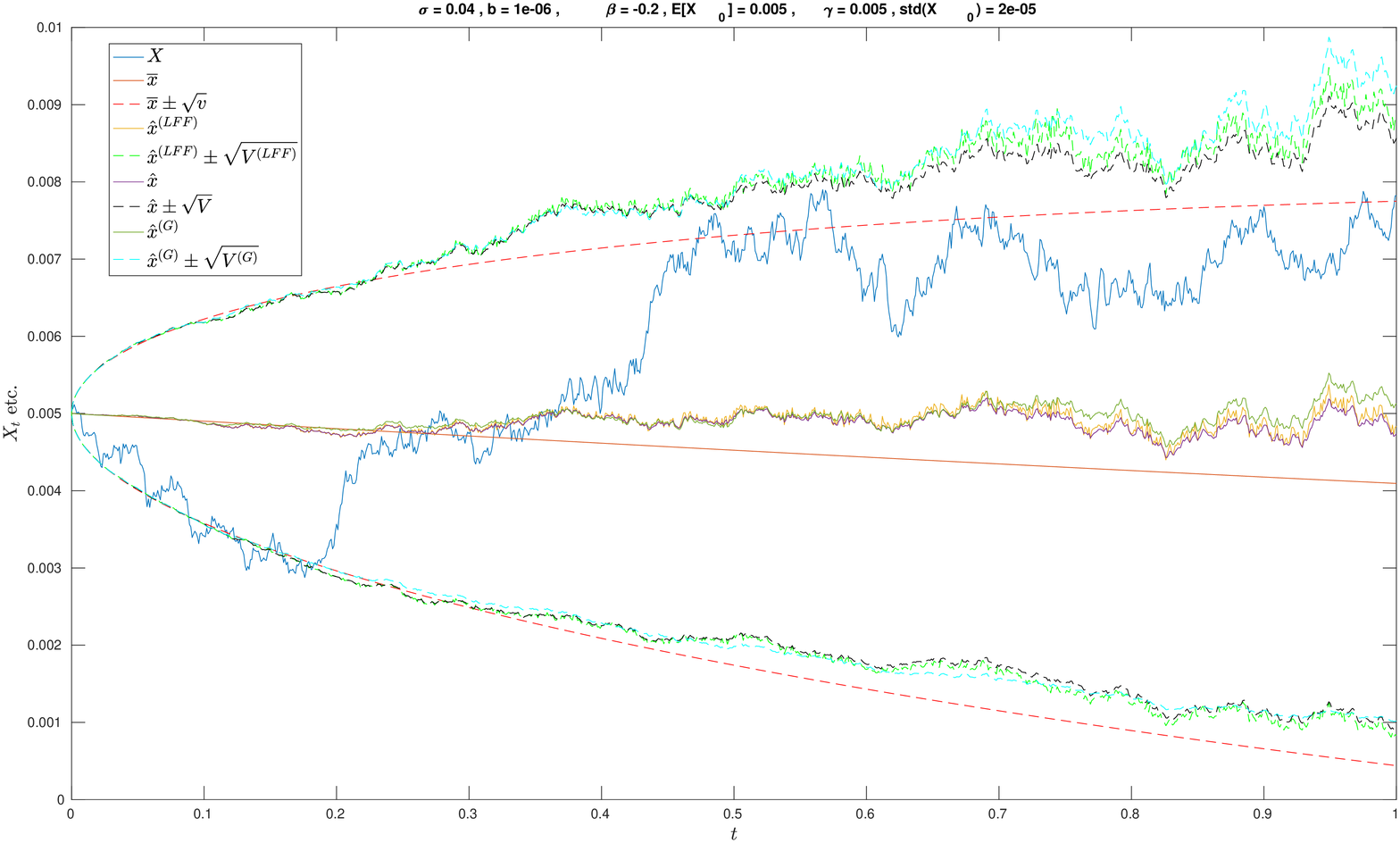}
  \caption{Case 1: Comparison with Gamma approximation.}
  \label{FigAff2}
\end{figure}

\begin{figure}
  \centering
  \includegraphics[trim={1cm 1cm 1cm
    1.4cm},clip,width=0.9\textwidth]{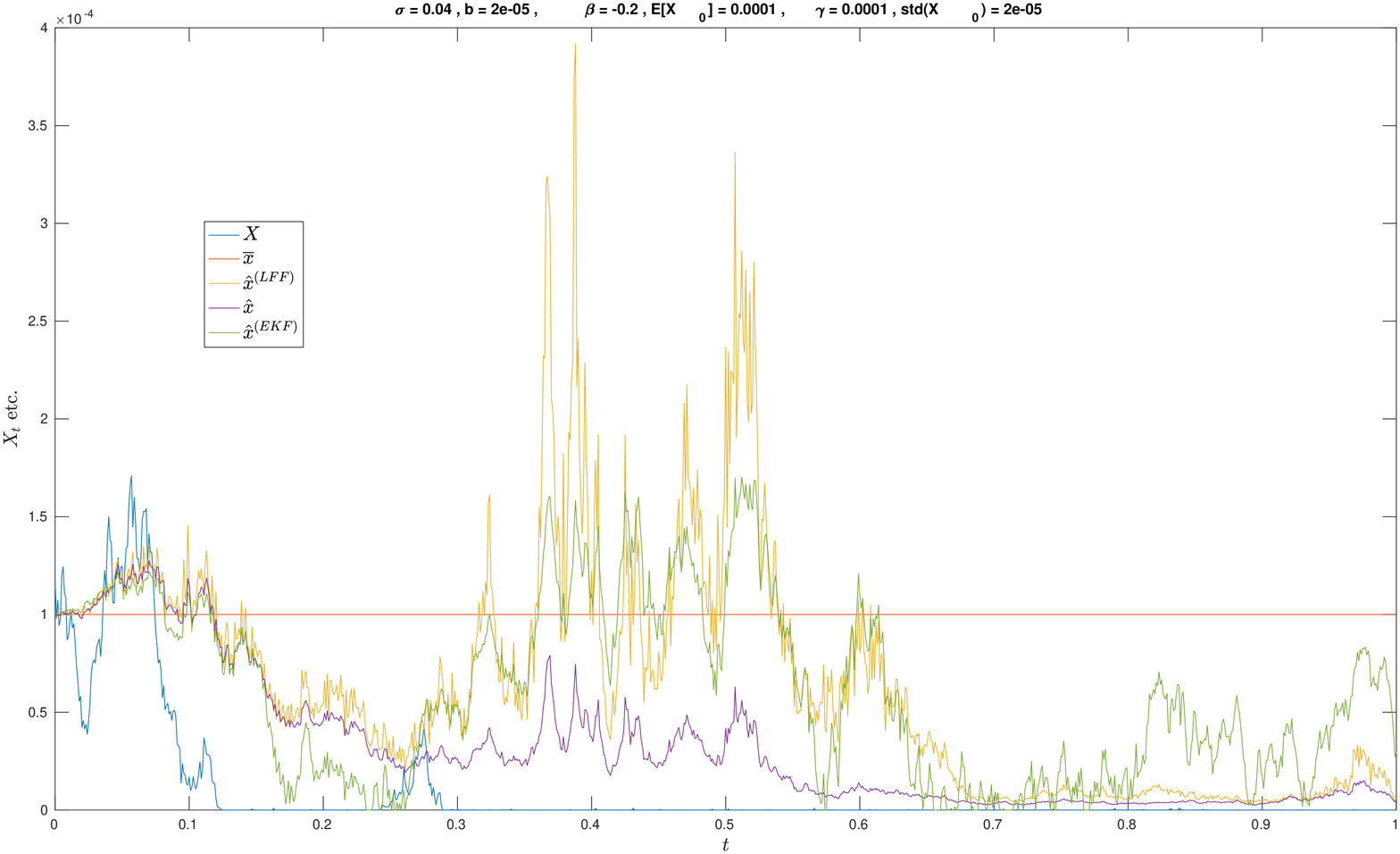}
  \caption{Case 2: Comparison with extended Kalman filter.}
  \label{FigAff3}
\end{figure}

\section{Illustration: Filtering a Wishart process}\label{sec:Wishart}

So far this article has been concerned with the filtering problem for
$\R^m_+ \times \R^{d-m}$-valued affine processes. We now test the
methodology on Wishart processes, an $S^+_d$-valued generalization of
the CIR process (as studied in Section~\ref{sec:CIR}). Here $S^+_d$
denotes the set of all symmetric, positive semidefinite $d \times d$
matrices. Wishart processes were introduced in \cite{Bru1991} and are
commonly used for multivariate stochastic volatility modeling. They
are a subclass of $S^+_d$-valued affine processes as characterized in
\cite{Cuchiero2011}.

Although in theory sequential Monte Carlo methods can be applied for
numerically filtering Wishart processes, in practice this is
infeasible for $d \geq 3$ (see below). Hence, so far no numerical
method has been available for this problem.  We fill this gap by
introducing a linearized filtering functional analogous to
\eqref{eq:modifiedZakaiFilter} and perform numerical experiments for
$d=3$. This section contains simulation results. A generalization of
the theory in Sections~\ref{sec:linearizedFiltering} and
\ref{sec:proofsAff} to $S^+_d$-valued affine processes will be subject
of future work.

\subsection{The signal process}\label{sec:WishartSummary}
Denote by $S^+_d$ the set of all symmetric, positive semidefinite
$d \times d$ matrices and set $S^-_d = - S^+_d$. A Wishart process is
(an $S^+_d$-valued) weak solution to
\begin{equation}\label{eq:WishartSDE} d X_t = (b + H X_t + X_t H^\top)
  \d t + \sqrt{X_t} \d B_t \Sigma + \Sigma^\top \d B_t^\top \sqrt{X_t}
  , \quad X_0 = x, \end{equation}
for $B$ a $d \times d$-matrix of independent standard Brownian motions and suitable $b \in S^+_d$, $x \in S^+_d$, $H \in \R^{d \times d}$, $\Sigma \in \R^{d \times d}$. For simplicity, we assume that $\Sigma \in S^+_d$, $H = 0$, $b = n \Sigma^2$ for some $n \in \mathbb{N}$ with  $n \geq d+1$ and that $x$ has distinct eigenvalues. Then \cite[Proof of Theorem~2'']{Bru1991} ensures that \eqref{eq:WishartSDE} has a unique strong solution for all $t \geq 0$. It also ensures that sample paths of $X$ can be simulated easily: Given $z_0 \in \R^{n \times d}$ with $x = z_0^\top z_0$ and an $n \times d$-Brownian motion $W$, set $Z_t = W_t \Sigma + z_0$ for $t \geq 0$. Then $X:=Z^\top Z$ is a weak solution to \eqref{eq:WishartSDE}. Hence, to simulate a sample path of $X$ one only needs to simulate a sample path of $W$ and apply these two transformations.
Finally, for $u,v \in S_d$ (the set of symmetric $d \times d$-matrices) define $\langle u , v \rangle_{S_d}:=\tr(u v)$. Then for $t \geq 0$ the Laplace transform of $X_t$ is given by
\[ \E[e^{\langle u , X_t \rangle_{S_d}}] = \exp(\phi(t,u)+\langle
  \psi(t,u),x \rangle_{S_d}) , \quad u \in S^-_d \] for some
$\phi:\R_{\geq 0} \times S^-_d \to \R_{-}$ and
$\psi: \R_{\geq 0} \times S^-_d \to S^-_d$. In fact $\phi$ and $\psi$
solve generalized Riccati equations \eqref{eq:genRiccati} with
$R(u):= 2 u \Sigma^2 u$, $F(u):= n \tr(\Sigma^2 u)$.
\subsection{Numerical solution of the filtering problem}
Fix $h\colon S^+_d \to \R^m$ linear and $\Gamma \in \R^{m \times m}$
symmetric, invertible. The observation process $Y$ is defined as
\[ Y_t = \int_0^t h(X_s)\dd s + \Gamma W_t, \quad t \geq 0, \] where
$W$ is an $m$-dimensional Brownian motion independent of $X$, and (the
signal process) $X$ is a solution to \eqref{eq:WishartSDE} with
parameters as specified above (under $\P$).  As before our goal is to
numerically calculate the distribution of $X_t$ conditional on the
$\sigma$-algebra generated by $(Y_s)_{s \in [0,t]}$, for any
$t \geq 0$. For this two methods are used: Firstly a bootstrap
particle filter as in \cite[Chapter~9]{Bain2009} and secondly the
approximate affine filter (AFF) induced by the linearized filtering
functional (LFF). These are defined analogously to the case of a
canonical state space. More precisely, fix $x_0 \in S^+_d$ and for
$t \geq 0$, $y \in C(\R_+,\R^m)$ and $f \in B(S^+_d)$ define the LFF
$\rho_t(\cdot,y)$ by
\[\rho_t(f,y)= \E \left[f(X_t)\exp \left(y_t^\top \Gamma^{-2} h(X_t) -
      \int_0^t y_s^\top \Gamma^{-2} \dd h(X_s)- \int_0^t h(x_0)^\top
      \Gamma^{-2} h(X_s) \dd s \right)\right] \] and the AFF
$\bar{\pi}_t(\cdot,y)$ by \eqref{eq:approximateFilter}. As in
Remark~\ref{rmk:obsNoiseScale} the LFF is obtained by linearizing the
pathwise filtering functional (associated to the observation process
$\Gamma^{-1} Y$ and observation function $\Gamma^{-1} h$) at
$x_0$. Denoting by $h^\top$ the adjoint\footnote{By definition, this
  is the unique linear map $h^\top \colon \R^m \to S_d$ such that
  $ h(x)^\top y = \langle x, h^\top(y)\rangle_{S_d} $ for all
  $y \in \R^m$.} of $h$ and setting
$\bar{y}_s = h^\top(\Gamma^{-2} y_s)$ and
$\bar{x}_0 = h^\top(\Gamma^{-2} h(x_0)) $ one rewrites $\rho_t(f,y)$
as
\[ \rho_t(f,y)= \E \left[f(X_t)\exp \left(\langle \bar{y}_t , X_t
      \rangle_{S_d} - \int_0^t \langle \bar{y}_s , \dd X_s
      \rangle_{S_d} - \int_0^t \langle \bar{x}_0 , X_s \rangle_{S_d}
      \dd s \right)\right] \] and based on
Section~\ref{sec:linearizedFiltering} one expects
\[ \bar{\pi}_t(f,y) = \E_{\mathbb{Q}_{x}^{y,t}}[f(X_t)],\] where under
$\mathbb{Q}_{x}^{y,t}$, $X$ satisfies $X_0 = x$ and
\begin{equation}\label{eq:WishartFilter} d X_s = (n\Sigma^2 + H_s X_s
  + X_s H_s^\top) \d s + \sqrt{X_s} \d B_s \Sigma + \Sigma \d B_s^\top
  \sqrt{X_s}, \quad s \in (0,t]\, ,\end{equation}
with $H_s = 2 \Sigma^2 (\Psi(s) - \bar{y}_s)$, $B$ a $d \times d$ Brownian motion under $\mathbb{Q}_{x}^{y,t}$ and $\Psi$ the solution to  
\begin{equation} \begin{aligned} -\partial_s \Psi(s) &=
    R(\Psi(s)-\bar{y}_s)-\bar{x}_0 , \quad s \in [0,t) \\ \Psi(t) & =
    \bar{y}_t. \end{aligned}
\end{equation}
In particular, \eqref{eq:WishartFilter} yields an ordinary
differential equation for the approximate conditional mean
$\hat{X}_t = \bar{\pi}_t(\mathrm{id},y)$ at time $t$: Formally taking
expectations in \eqref{eq:WishartFilter} one obtains $\hat{X}_0 = x$
and
\begin{equation}\label{eq:WishartMeanApprox} \frac{\d \hat{X}_s}{\d s}
  = (n\Sigma^2 + H_s \hat{X}_s + \hat{X}_s H_s^\top), \quad s \in
  (0,t].  \end{equation}

\subsection{Discussion} We now compare the two methods in an
example. The following choices have been made:
$h(x):=\mathrm{vech}(x)$ is the half-vectorization operator (which
takes the elements of $x$ in the lower triangular part and writes them
in an $m$-dimensional column vector) and $m = \frac{1}{2}
d(d+1)$. Denote by $I_d$ the $d \times d$ identity matrix. We choose
$d=3$, $\Gamma = \Gamma_0 I_3$, $\Sigma = \sigma I_3$ and the
parameter values as shown in the following summary:
\begin{equation}\label{eq:WishartSummary}\begin{aligned} \d X_t & = n \sigma^2 I_3 \d t +  \sigma X_t^{1/2} \d B_t + \sigma \d B_t^\top X_t^{1/2}, \quad X_0 = x_0 \\
    \d Y_t & = \mathrm{vech}(X_t) \d t + \Gamma_0 \d W_t , \quad Y_0 =
    0 \\ (n,\sigma,x_0,\Gamma_0) &=
    (4,0.04,\mathrm{diag}(0.75^2,0.5^2,0.25^2),0.06). \end{aligned} \end{equation}
The filtering problem is discretized analogously to the case of a CIR
process discussed in detail in Section~\ref{sec:CIR}. We choose $T=1$
and equidistant time-points $t_i = i T / N$, $i=0,\ldots N$ with
$N=100$. (Exact) samples of $(X_{t_0}, X_{t_1},\ldots X_{t_N})$ can be
generated as explained in Section~\ref{sec:WishartSummary} and a
spline interpolation is used to generate a continuous observation path
$y$ from discrete measurements.

In this setting the conditional mean $\hat{x}_t$ (see
\eqref{eq:condMeanAndVar} and \eqref{eq:condMeanAndVarApprox}) is
approximated by
\begin{itemize}
\item $\hat{x}^{(PF)}_t$ based on a bootstrap particle filter with
  $N_p$ particles (as in \cite[Chapter~10]{Bain2009}),
\item $\hat{x}^{(AFF)}_t = \hat{X}_t$ in \eqref{eq:WishartMeanApprox}.
\end{itemize}
The computation time required to calculate $\hat{x}^{(PF)}_t$ with a generic implementation on a
standard laptop is enormous already for moderate $N_p$ (e.g. for
$N_p = 10^3, 10^4, 10^5$ it takes roughly $10$ seconds, $1$ minute,
$10$ minutes, respectively). On the other hand, in all these cases the
approximation is very bad and so, in contrast to
Section~\ref{sec:CIR}, here no benchmark is available.  The two
approximations are therefore compared based on their mean square
error: We generate $M$ sample paths of \eqref{eq:WishartSummary},
calculate the approximate conditional mean with both methods and
calculate the average at each time-point,
\[ e^{(m)}_t = \frac{1}{M}\sum_{j=1}^M \|X_{t}^{j} - x^{(m),j}_{t}\|^2
  , \quad t=t_0,\ldots,t_N \] for $m \in \{PF,AFF\}$. Here $X^{j}$ is
the $j$-th sample path of $X$, $x^{(m),j}$ is the approximate
conditional mean (calculated using method $m$) associated to it and
$\|u\|^2:=\langle u,u \rangle_{S_d}$ for $u \in S_d$. By the law of
large numbers and the definition of $\hat{x}$, a smaller value of
$e^{(m)}_{t_i}$ indicates that (on average) $\hat{x}_{t_i}$ and
$x_{t_i}^{(m)}$ are closer.

Figure~\ref{FigAff4} shows a plot of $(t_i,e^{(m)}_{t_i})$,
$i=0,\ldots,N$ for $m \in \{PF,AFF\}$, $N_p=10^4$ and $M=100$. For
this number of particles the calculation of $\hat{x}^{(PF)}$ takes
about $15$ times longer than the calculation of $\hat{x}^{(AFF)}$ (on
average). Nevertheless, the approximation quality of a bootstrap
particle filter is considerably worse than that of the AFF, since the
average mean-square error is significantly larger for longer
time-periods, as shown in Figure~\ref{FigAff4}.

\begin{figure}
  \centering
  \includegraphics[trim={1cm 1cm 1cm
    1.6cm},clip,width=0.9\textwidth]{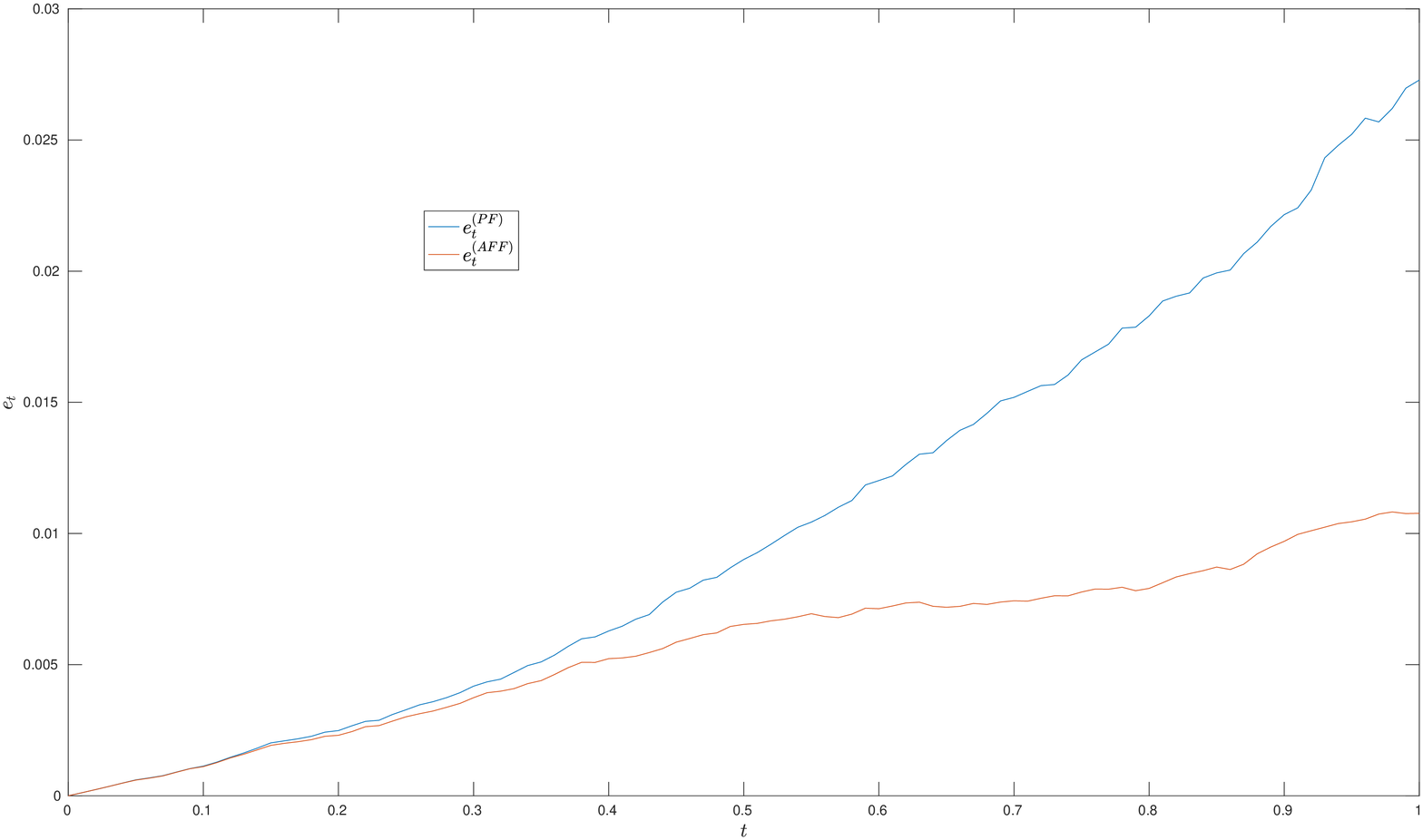}
  \caption{Comparison of mean square error for conditional mean of
    particle filter and AFF.}
  \label{FigAff4}
\end{figure}

 \bibliographystyle{amsalpha}

\begin{thebibliography}{MMKS11}

\bibitem[Ama90]{Amann1990}
H.~Amann, \emph{Ordinary differential equations: an introduction to nonlinear
  analysis}, vol.~13, Walter de Gruyter, 1990.

\bibitem[Bat06]{Bates2006}
D.S. Bates, \emph{Maximum likelihood estimation of latent affine processes},
  Review of Financial Studies \textbf{19} (2006), no.~3, 909--965.

\bibitem[BC09]{Bain2009}
A.~Bain and D.~Crisan, \emph{{Fundamentals of stochastic filtering}}, Springer,
  2009.

\bibitem[BH98]{Brigo1998}
D.~Brigo and B.~Hanzon, \emph{On some filtering problems arising in
  mathematical finance}, Insurance: Mathematics and Economics \textbf{22}
  (1998), no.~1, 53--64.

\bibitem[BHL98]{Brigo1998a}
D.~Brigo, B.~Hanzon, and F.~LeGland, \emph{A differential geometric approach to
  nonlinear filtering: the projection filter}, IEEE Transactions on Automatic
  Control \textbf{43} (1998), no.~2, 247--252.

\bibitem[Bru91]{Bru1991}
M.-F. Bru, \emph{Wishart processes}, Journal of Theoretical Probability
  \textbf{4} (1991), no.~4, 725--751.

\bibitem[CC05]{Clark2005}
J.M.C. Clark and D.~Crisan, \emph{{On a robust version of the integral
  representation formula of nonlinear filtering}}, Probability Theory and
  Related Fields \textbf{133} (2005), no.~1, 43--56.

\bibitem[CDFO13]{Crisan2013}
D.~Crisan, J.~Diehl, P.K. Friz, and H.~Oberhauser, \emph{Robust filtering:
  Correlated noise and multidimensional observation}, Annals of Applied
  Probability \textbf{23} (2013), 2139--2160.

\bibitem[CFMT11]{Cuchiero2011}
C.~Cuchiero, D.~Filipović, E.~Mayerhofer, and J.~Teichmann, \emph{Affine
  processes on positive semidefinite matrices}, Annals of Applied Probability
  \textbf{21} (2011), no.~2, 397--463.

\bibitem[CFY05]{cheridito2005}
P.~Cheridito, D.~Filipović, and M.~Yor, \emph{Equivalent and absolutely
  continuous measure changes for jump-diffusion processes}, Annals of Applied
  Probability \textbf{15} (2005), no.~3, 1713--1732.

\bibitem[CIR85]{Cox1985}
J.C. Cox, J.E. Ingersoll, and S.~Ross, \emph{A theory of the term structure of
  interest rates}, Econometrica \textbf{53} (1985), no.~2, 385--407.

\bibitem[Cla78]{Clark1978}
J.M.C. Clark, \emph{The design of robust approximations to the stochastic
  differential equations of nonlinear filtering}, Communication systems and
  random process theory (Proc. 2nd NATO Advanced Study Inst., Darlington,
  1977), NATO Advanced Study Inst. Ser., Ser. E: Appl. Sci., vol.~25, Sijthoff
  \& Noordhoff, 1978, p.~721–734.

\bibitem[CM99]{Carr1999}
P.~Carr and D.B. Madan, \emph{Option valuation using the fast fourier
  transform}, Journal of Computational Finance \textbf{2} (1999), no.~4,
  61--73.

\bibitem[CR11]{crisan2011}
D.~Crisan and B.~Rozovskii (eds.), \emph{The oxford handbook of nonlinear
  filtering}, Oxford Handbooks in Mathematics, Oxford University Press, 2011.

\bibitem[Cri14]{crisan2014}
D.~Crisan, \emph{The stochastic filtering problem: a brief historical account},
  Journal of Applied Probability \textbf{51A} (2014), 13--22.

\bibitem[CS03]{Chen2003}
R.-R. Chen and L.~Scott, \emph{Multi-factor cox-ingersoll-ross models of the
  term structure: Estimates and tests from a kalman filter model}, The Journal
  of Real Estate Finance and Economics \textbf{27} (2003), no.~2, 143--172.

\bibitem[CT13]{Cuchiero2013}
C.~Cuchiero and J.~Teichmann, \emph{Path properties and regularity of affine
  processes on general state spaces}, S{\'e}minaire de Probabilit{\'e}s XLV
  (Catherine Donati-Martin, Antoine Lejay, and Alain Rouault, eds.), Springer
  International Publishing, Heidelberg, 2013, pp.~201--244.

\bibitem[Dav80]{Davis1980}
M.H.A. Davis, \emph{On a multiplicative functional transformation arising in
  nonlinear filtering theory}, Zeitschrift f{\"u}r Wahrscheinlichkeitstheorie
  und Verwandte Gebiete \textbf{54} (1980), no.~2, 125--139.

\bibitem[Dav11]{Davis2011}
\bysame, \emph{Pathwise nonlinear filtering with correlated noise}, The Oxford
  Handbook of Nonlinear Filtering, Oxford University Press, 2011, pp.~403--42.

\bibitem[DFS03]{duffie2003}
D.~Duffie, D.~Filipović, and W.~Schachermayer, \emph{Affine processes and
  applications in finance}, Annals of Applied Probability \textbf{13} (2003),
  no.~3, 984--1053.

\bibitem[Duf01]{dufresne2001}
D.~Dufresne, \emph{The integrated square-root process}, Centre for Actuarial
  Studies, University of Melbourne, 2001, Research Paper no. 90.

\bibitem[EK86]{Ethier1986a}
S.N. Ethier and T.G. Kurtz, \emph{{Markov processes. Characterization and
  convergence}}, John Wiley \& Sons, 1986.

\bibitem[Fil05]{Filipovic2005}
D.~Filipovic, \emph{{Time-inhomogeneous affine processes}}, Stochastic
  Processes and their Applications \textbf{115} (2005), no.~4, 639--659.

\bibitem[FKR10]{friz2010d}
P.K. Friz and M.~Keller-Ressel, \emph{Moment explosions}, John Wiley \& Sons,
  Ltd, 2010.

\bibitem[FM09]{filipovic09}
D.~Filipovi{\'c} and E.~Mayerhofer, \emph{Affine diffusion processes: theory
  and applications}, Advanced financial modelling, Radon Ser. Comput. Appl.
  Math., vol.~8, Walter de Gruyter, Berlin, 2009, pp.~125--164.

\bibitem[Gla04]{Glasserman2004}
P.~Glasserman, \emph{Monte carlo methods in financial engineering},
  Applications of mathematics : stochastic modelling and applied probability,
  Springer, 2004.

\bibitem[GP99]{Geyer1999}
A.L.J. Geyer and S.~Pichler, \emph{A state-space approach to estimate and test
  multifactor cox-ingersoll-ross models of the term structure}, Journal of
  Financial Research \textbf{22} (1999), no.~1, 107--130.

\bibitem[Hes93]{Heston1993}
S.L. Heston, \emph{A closed-form solution for options with stochastic
  volatility with applications to bond and currency options}, The Review of
  Financial Studies \textbf{6} (1993), no.~2, 327--343.

\bibitem[JS03]{Jacod2003}
J.~Jacod and A.N. Shiryaev, \emph{{Limit theorems for stochastic processes}},
  2nd ed., Springer, 2003.

\bibitem[KMK10]{Kallsen2010}
J.~Kallsen and J.~Muhle-Karbe, \emph{Exponentially affine martingales, affine
  measure changes and exponential moments of affine processes}, Stochastic
  Processes and their Applications \textbf{120} (2010), no.~2, 163--181.

\bibitem[KO88]{Kurtz1988}
T.G. Kurtz and D.L. Ocone, \emph{Unique characterization of conditional
  distributions in nonlinear filtering}, Annals of Probability \textbf{16}
  (1988), no.~1, 80--107.

\bibitem[KRM15]{keller-ressel2015}
M.~Keller-Ressel and E.~Mayerhofer, \emph{Exponential moments of affine
  processes}, Annals of Applied Probability \textbf{25} (2015), no.~2,
  714--752.

\bibitem[KRST11]{Keller-Ressel2011}
M.~Keller-Ressel, W.~Schachermayer, and J.~Teichmann, \emph{Affine processes
  are regular}, Probability Theory and Related Fields \textbf{151} (2011),
  no.~3, 591--611.

\bibitem[LS01]{shiryaev2001}
R.S. Liptser and A.N. Shiryaev, \emph{{Statistics of Random Processes, Volumes
  I and II.}}, 2nd ed., Springer, 2001.

\bibitem[LSZ15]{law2015}
K.~Law, A.~Stuart, and K.~Zygalakis, \emph{Data assimilation}, Springer-Verlag,
  New York, 2015.

\bibitem[MMKS11]{Mayerhofer2011a}
E.~Mayerhofer, J.~Muhle-Karbe, and A.G. Smirnov, \emph{A characterization of
  the martingale property of exponentially affine processes}, Stochastic
  Processes and their Applications \textbf{121} (2011), no.~3, 568--582.

\bibitem[Roc70]{Rockafellar1970}
R.T. Rockafellar, \emph{{Convex analysis}}, Princeton University Press, 1970.

\bibitem[RW00]{Rogers2000}
L.C.G. Rogers and D.~Williams, \emph{{Diffusions, Markov Processes, and
  Martingales}}, second ed., vol.~1, Cambridge University Press, 2000.

\bibitem[RY99]{Revuz1999}
D.~Revuz and M.~Yor, \emph{{Continuous martingales and Brownian motion}}, 3rd
  ed., Springer, 1999.

\bibitem[SV10]{spreij2010}
P.~Spreij and E.~Veerman, \emph{The affine transform formula for affine
  jump-diffusions with general closed convex state spaces.}, Preprint
  arXiv:1005.1099 (2010).

\bibitem[vH07]{vanHandel2007}
R.~van Handel, \emph{Filtering, stability, and robustness}, Ph.D. thesis,
  California Institute of Technology, 2007.

\end{thebibliography}

\end{document}